\newtheorem{theorem}{Theorem}[section]
\theoremstyle{definition}
\newtheorem{definition}[theorem]{Definition}
\newtheorem{lemma}[theorem]{Lemma}
\newtheorem{proposition}{Proposition}
\let\e=\varepsilon
\let\p=\partial
\let\O=\Omega
\let\o=\omega
\let\hide\iffalse
\let\unhide\fi
\newcommand{\R}{\mathbb{R}}
\newcommand{\be}{\begin{equation}}
\newcommand{\bm}{\begin{multline}}
\newcommand{\ee}{\end{equation}}
\newcommand{\dd}{\mathrm{d}}
\newcommand{\Bes}{\begin{eqnarray*}}
\newcommand{\Ees}{\end{eqnarray*}}
\newcommand{\Be}{\begin{equation} }
\newcommand{\Ee}{\end{equation}}
\newcommand{\Bs}{\begin{split}}
\newcommand{\Es}{\end{split}}
\def\p{\partial}
\def\O{\Omega}
\def\R{\mathbb{R}}
\def\B{\begin{equation}}
\def\E{\end{equation}}
\def\BN{\begin{eqnarray*}}
\def\EN{\end{eqnarray*}}
\title[Glassey-Strauss representation in a Half Space] %Use the shortened version of the full title
{Glassey-Strauss representation of Vlasov-Maxwell systems in a Half Space} 
\author{Yunbai Cao and Chanwoo Kim}
\subjclass{Primary: 35Q61, 35Q83; Secondary: 35Q70.}
 \keywords{Vlasov-Maxwell system, Glassey-Strauss representation, half space, perfect conductor, electromagnetic fields.}
\email{yc1157@math.rutgers.edu}
\email{ckim.pde@gmail.com; chanwoo.kim@wisc.edu}
\thanks{CK is supported in part by National Science Foundation under Grant No. $1900923$ and the Brain Pool program (NRF-2021H1D3A2A01039047) of the Ministry of Science and ICT in Korea.}
\begin{document}

\maketitle

% Enter the first author's name and address:
\centerline{\scshape Yunbai Cao}
\medskip
{\footnotesize
% please put the address of the first author
 \centerline{Department of Mathematics}
   \centerline{Rutgers University}
   \centerline{Piscataway, NJ 08854, USA}
} % Do not forget to end the {\footnotesize by the sign }

\medskip

\centerline{\scshape Chanwoo Kim}
\medskip
{\footnotesize
 % please put the address of the second  and third author
 \centerline{Department of Mathematics}  
 \centerline{University of Wisconsin-Madison}
    \centerline{Madison, WI 53717, USA}
    
     \centerline{$\&$} 
 \centerline{Department of Mathematical Sciences}  
 \centerline{Seoul National University}
    \centerline{Seoul 08826, Korea}

}
\bigskip

 \begin{center}
\textit{This paper is dedicated to the memory of the late Bob Glassey.} 
\end{center}

\begin{abstract}
Following closely the classical works \cite{Glassey}-\cite{GSc2} by Glassey, Strauss, and Schaeffer, we present a version of the Glassey-Strauss representation for the Vlasov-Maxwell systems in a 3D half space when the boundary is the perfect conductor.
\end{abstract}

 \section{Vlasov-Maxwell systems}
Consider the plasma particles of several species with masses $m_\beta$ and charges $e_\beta$ for $\beta=1,2,\cdots ,N,$ which occupy the half space 
 \Be\label{defO}
 \O = \mathbb R^3_+ = \{(x_1,x_2,x_3) \in \R^3: x_3>0\} \ni x. 
 \Ee
  The relativistic velocity for each particle is, for the speed of light $c$,
\Be\label{rel_v}
\hat v_\beta = \frac{v}{\sqrt{m_\beta^2+ |v|^2/c^2}} \ \ \ \text{for}   \ \  v \in \R^3.
\Ee 
Denote by $f_\beta(t,x,v)$ the particle densities of the species. The total electric charge density (total charge per unit volume) $\rho$ and 
the total electric current density (total current per unit area) $J$ are given by 
\begin{align}
\rho(t,x) &=   \int_{\mathbb R^3} \sum_{\beta}e_\beta f_\beta (t,x,v) dv,\label{charge} \\
J(t,x)&=  \int_{\mathbb R^3} \sum_{\beta} \hat v_\beta e_\beta f_\beta(t,x,v) dv.\label{current}
\end{align}

The relativistic Vlasov-Maxwell system governs the evolution of $f_\beta(t,x,v)$ (see page 140 of the Glassey's book \cite{Glassey}): for $(t,x,v) \in [0,T] \times \O \times \R^3$,
 \Be \label{VMfrakF}
\p_t f_\beta + \hat v_\beta \cdot \nabla_x f_\beta+  (e_\beta E + e_\beta\frac{\hat v_\beta}{c} \times B    - m_\beta g \mathbf e_3  ) \cdot \nabla_v f_\beta =0, 
\Ee
where $g$ is the gravitational constant (we can easily treat a general given external field). The electromagnetic fields $(E,B)$ is determined by the Maxwell's equations in a vacuum (in Gaussian units)
 \begin{align}
  \nabla_x \cdot E &=  4\pi  \rho,\label{divEfield}\\
  \nabla_x \times E&= -\frac{1}{c}  \p_t B  , \label{curlfieldE}\\
     \nabla_x \cdot B &= 0, \label{divBfield}\\
  \nabla_x \times B &= \frac{4 \pi}{c} J +  \frac{1}{c} \p_t E   .\label{curlfieldB} \end{align}

\section{Boundaries} Plasma particles can face various forms of boundaries in different scales from the astronomic one to the laboratory (\cite{CKL,CK}). In particular, we are interested in the plasma inside the fusion reactors in this paper. So-called plasma-facing materials, the materials that line the vacuum vessel of the fusion reactors, experience violent conditions as they are subjected to high-speed particle and neutron flux and high heat loads. These require several challenging conditions for the boundary materials, namely high thermal conductivity for efficient heat transport, high cohesive energy for low erosion by particle bombardment, and low atomic number to minimize plasma cooling. Traditionally sturdy metals and alloys such as stainless steel, tungsten, titanium, beryllium, and molybdenum have been used for the boundary material \cite{Federici}. As these metals have very high electric conductivity, we can regard them as \textit{the perfect conductor}. This boundary condition is the major interest of the paper (see Section \ref{PCB}). 

On the other hand, carbon/carbon composites such as refined graphite have excellent thermal and mechanical properties: eroded
carbon atoms are fully stripped in the plasma core, thus reducing core radiation. In addition,
the surface does not melt but simply sublimes if overheated. For this reason, the majority of the latest machines have expanded graphite coverage tile to include all of the vacuum vessel walls \cite{Federici}. Graphite is an allotrope of carbon, existing as the collection of thin layers of a giant carbon atoms' covalent lattice. As there is one delocalized electron per carbon atom, graphite does conduct electricity throughout each layer of the graphite lattice but poorly across different layers. Due to the anisotropic electric conductivity of graphite, one has to employ different boundary conditions from one for the metal boundary.

\subsection{Perfect conductor boundary}\label{PCB}
In this section, we consider the boundary conditions of $(E,B)$ at the boundary $\p\O:=\{(x_1,x_2,x_3) \in \R^3: x_3=0\}$. For that, actually we consider more general situation: two different media occupy $\mathbb R^3_+$ and $\mathbb R^3_-:=   \{(x_1,x_2,x_3) \in \R^3: x_3<0\}$ separately. In case that media are subject to electric and
magnetic polarization, it is much more convenient to write the Maxwell's equations only for the free charges and free currents in terms of SI units (see Chapter 7 in \cite{Griffiths}): 
 \begin{align}
  \nabla_x \cdot D &=   \rho_{\text{free}},\label{divEfield-}\\
  \nabla_x \times E&= -  \p_t B  , \label{curlfieldE-}\\
     \nabla_x \cdot B &= 0, \label{divBfield-}\\
  \nabla_x \times H &=    J_{\text{free}} +    \p_t D   ,\label{curlfieldB-} \end{align} 
where $\epsilon_0$ is the permittivity of free space and $\mu_0$ is the permeability of free space (note that the speed of light $c= \frac{1}{\sqrt{\epsilon_0 \mu_0}}$). Here, $D= \epsilon_0 E + P$ and $H= \frac{1}{\mu_0}B-M$, while an electric polarization $P$ and a magnetic polarization $M$ are determined by appropriate constitutive
relations in terms of $E$ and $B$. For example, a linear medium has 
\Be\label{linear_M}
P= \e_0 \chi_e E,   \ \ \ M=  \chi_m H.
\Ee
Here,  $\chi_e$ and $\chi_m$ are called the electric susceptibility and magnetic susceptibility, respectively. In a vacuum, as $\chi_e=0= \chi_m$ and $\rho=\rho_{\text{free}}$ and $J=J_{\text{free}}$ (all plasma particles/charges are free to move), we recover \ref{divEfield}-\ref{curlfieldB}.

Denote by $n$ the outward unit normal of $\O$ (which is $n=-\mathbf e_3$ for our case); $[V]$ the jump of $V$ across $\p\O$: $[V](x_1,x_2)=  \lim_{x_3 \downarrow 0}V(x_1,x_2,x_3)- \lim_{x_3 \uparrow 0}V(x_1,x_2,x_3)$. Then from \ref{curlfieldE-} and \ref{divBfield-} we derive the jump conditions
\Be\label{jump_EB}
 n \times  [E]  = 0 , \ \ \ 
n \cdot [B]    =0.
\Ee
In other words, the tangential electric fields $E_1$, $E_2$, and the normal magnetic field $B_3$ are continuous across the interface $\p\O$. We note that \ref{jump_EB} hold in general, no matter what constitutive relations hold (\cite{CS,GuoVM}). (In special circumstances (e.g. electromagnetic band gap structures), one has to consider a non-zero surface magnetic charge and current, in which \ref{jump_EB} should be replaced by discontinuous jump conditions \cite{Surface_EM}. Such cases are out of our interest in the paper.)

Now we come back to the original situation that the plasma particles stay in a vacuum of the upper half space $\O=\R^3_+$, while some matter fills the lower half space $\R^3_-$. We assume that the current follows the Ohm's law in the matter:
\Be\label{Ohm}
J_{\text{free}}= \sigma  \{\text{Lorentz force}\}    , \Ee 
where Lorentz force equals $e_\beta E + e_\beta \frac{\hat{v}_\beta}{c} \times B$ as the gravitation effect is negligible inside the matter. Here, $\sigma$ is the conductivity of the matter, which equals the reciprocal of the resistivity. The perfect conductors have $\sigma=\infty$ and the dielectrics get $\sigma=0$, while most of real matter is between them. As the drift speed of electrons/ions in the matter is slow (typical drift speed of electrons is few millimeters per second), we ignore the magnetic effect in the Lorentz force to derive that $\nabla_x \cdot J_{\text{free}}= \sigma  \nabla_x \cdot E$. We assume that the matter is the linear medium \ref{linear_M} and hence $D= \epsilon_0 (1+ \chi_e) E $. We derive that, from the continuity equation and \ref{divEfield-},
\Be\notag
\p_t \rho_{\text{free}} = -\nabla_x \cdot J_{\text{free}}= - \sigma \nabla_x \cdot E = - \frac{\sigma}{ \epsilon_0 (1+ \chi_e) } \nabla_x \cdot D =  - \frac{\sigma}{ \epsilon_0 (1+ \chi_e) } \rho_{\text{free}}. 
\Ee
Hence the charge density $\rho_{\text{free}}$ vanishes in the time scale of $1/\sigma$, which implies $\rho_{\text{free}}\equiv 0$ inside the perfect conductor $(\sigma=\infty)$. As a consequence, the charge density and current density accumulate only on the surface/boundary/interface (``Skin effect" \cite{Jackson}). Moreover, \ref{curlfieldB-} and \ref{Ohm} formally imply that $E\equiv 0$, and then \ref{curlfieldE-} forces $\p_tB=0$ inside the perfect conductor. Therefore by assuming the initial datum of $B$ vanishes in $\R^3_-$, we have $B\equiv 0$ in $\R^3_-$. On the other hand, the superconductor has $B\equiv0$ no matter what initial datum of $B$ is (the Meissner effect). 

We summarize the above discussion about $E_1,E_2, B_3$ in \ref{PCBC} and will derive the boundary conditions for $E_3$ and $B_1,B_2$ using the equations:
\begin{definition}[Perfect conductor (or superconductor) boundary condition]\label{def:PCBC}
Assume the lower half space $\R^3_-$ consists of a linear medium \ref{linear_M} of the perfect conductor $\sigma=\infty$ satisfying the Ohm's law \ref{Ohm}. We further assume either the initial magnetic field $B$ totally vanishes or the matter of $\R^3_-$ is the superconductor. Then $E\equiv 0 \equiv B$ in $\R^3_-$. Therefore, from \ref{jump_EB} we derive boundary conditions of the solutions $(E,B)$ to \ref{divEfield}-\ref{curlfieldB}:
\Be
E_1=0=E_2, \ \ \ B_3=0
\ \ \  \text{on} \ \ \ \p\O.\label{PCBC} 
\Ee
Moreover,
\begin{align}
\p_3 E_3 =  4\pi \rho \ \ \ &\text{on} \ \ \ \p\O,\label{BC:E_3} \\
\p_3 B_1 = 4\pi J_2, \ \   \p_3 B_2 = - 4\pi J_1 \ \ \ &\text{on} \ \ \ \p\O.\label{BC:B||}
\end{align}
\end{definition}

We only need to derive the boundary conditions for $E_3$, $B_1$, and $B_2$. We achieve them by using the equations \ref{divEfield} and \ref{curlfieldB}. From \ref{divEfield}, we have $
\p_3 E_3 = - \p_1 E_1 - \p_2 E_2 + 4\pi \rho.$ Then \ref{PCBC} formally implies \ref{BC:E_3}. 

Now from \ref{curlfieldB}, we have, for $n= -\mathbf e_3$, 
\Be \notag
\frac{1}{c} n \times   \p_t E   + \frac{ 4\pi}{c}    n \times J    - n \times ( \nabla_x \times B) = 0.
\Ee
From \ref{PCBC}, on $\p\O$ we deduce that $n \times   \p_t E =0$ and hence \Be
\begin{split}\notag
0 =    4 \pi \begin{bmatrix} J_2 \\ -J_1 \\ 0 \end{bmatrix}  -  \begin{bmatrix} 0 \\ 0 \\ - 1 \end{bmatrix} \times  \begin{bmatrix} \p_2 B_3 - \p_3 B_2 \\ - ( \p_1 B_3 - \p_3 B_1 ) \\ \p_1 B_2 - \p_2 B_1 \end{bmatrix}  
  =    4 \pi \begin{bmatrix} J_2 \\ -J_1 \\ 0 \end{bmatrix} - \begin{bmatrix} \p_3 B_1 \\  \p_3 B_2 \\ 0 \end{bmatrix} .
\end{split}
\Ee
Therefore, we conclude \ref{BC:B||}.

\subsection{Surface charge and surface current}\label{Surface_EM}To consider general jump conditions across the interface \ref{jump_EB2}, we need to count a surface charge with density $\sigma_{\text{free}}$ and a surface current with density $K_{\text{free}}$ which are ``concentrated" on the interface $\p\O$ (see \cite{Jackson, Griffiths}). %In other words, the volume integration of the free charge density $\rho_{\text{free}}$ will turn into the surface integration of the surface charge densities: for all $\ell>0$ and all $(a,b) \in \R^2$, 
%\[
%\int_{\{x \in \R^3: |x_1 -a| < \ell , |x_2-b| < \ell , |x_3 | < \epsilon \}}   \rho_{\text{free}}  \rightarrow   \int_{ \{ x \in \p \O: |x_1-a | < \ell , |x_2-b | < \ell  \}}  \sigma_{\text{free}}  \ \ \ \text{as} \ \ \e \downarrow 0 .
%\]
 Physically, a non-zero surface charge and current exist on the surface of the perfect conductor as the interior electric field is zero (see a survey on the concept of the ``perfect'' conductor and the surface charge and current in history \cite{McDonald}). Then from \ref{curlfieldB} and \ref{divEfield}, we formally get
\Be\label{jump_EB2}
n \times [H]  =n  \times K_{\text{free}}  ,\ \ \    
n \cdot  [D] = \sigma_{\text{free}}. 
\Ee
For example, if both media are linear \ref{linear_M} then \ref{jump_EB2} implies that 
\Be\label{jump_EB3}
n \times (\frac{1}{\mu_+} B_+
-  \frac{1}{\mu_-} B_-)=n  \times K_{\text{free}}  ,\ \ \ 
n \cdot   (\epsilon_+ E_+ - \epsilon_- E_-) = \sigma_{\text{free}}, 
\Ee
where $\epsilon_\pm= \epsilon_0 (1+ \chi_{e,\pm})$ and $\mu_\pm= \mu_0 (1+ \chi_{m,\pm})$ are the permittivity and permeability for the upper and lower media. In the case of Definition \ref{def:PCBC}, the upper half is the vacuum and the lower half is a perfect conductor with $B\equiv 0$, then \ref{jump_EB2} implies that 
\Be\label{Ksigma_pO}
K_{\text{free}\parallel} = 
\frac{1}{\mu_0 }B_\parallel   |_{\p\O} , \ \ \  \ 
 \sigma_{\text{free}}= \e_0 E_3|_{\p\O} .
\Ee
We note that \ref{Ksigma_pO} is not the boundary condition, but one can measure the surface charge and surface current on the surface of the perfect conductor by evaluating $E$ and $B$. 
%These seem another boundary condition, on top of \ref{PCBC}, \ref{BC:E_3}, and \ref{BC:B||}. However, they are certainly not

On the other hand, unless the dielectric media can be polarized on the interface, both surface charge and current vanish on the interface. This results in the dielectric boundary condition, which is \ref{jump_EB2} with $K_{\text{free}}=0=\sigma_{\text{free}}$ (\cite{CS}). When the media have anisotropic conductivities as graphite, the surface charge and current would not be prescribed simply but determined by PDEs.

\hide

and let $\p D_+$ be the upper face of the box $D$ and $\p D_-$ be the lower face. Then integrate \ref{divEfield} over $D$ and apply the divergence theorem, we get
\[
\int_{ \p D_+ } E_3 - \int_{\p D_- } E_3 = \int_{ \{ x \in \p \O: |x_1 | < l , |x_2 | < l  \}}  \sigma + O(\e),
\]
where we also put the surface integration of $E_3$ on the $4$ side faces of $D$ into $O(\e)$.

 Letting $\e \ll 1 $ and ignoring the term we get
\Be
E_3^+ - E_3^- = \sigma,
\Ee
where $E_3^+ - E_3^-$ is the jump of $E_3$ across the surface of the conductor $\p \O$. And by the same reason we can integrate \ref{divBfield} over $D$ to get
\Be
B_3^+ - B_3^- = 0.
\Ee
Next we consider a small loop straddling the surface
\[
\mathcal P = \{ x: x_1 = 0, |x_2 | < l,  |x_3 | < \e \},
\]
and let $\mathcal S$ be the surface bounded by the closed loop. Let $\ell_+ $ be the upper side line of the loop and $\ell_-$ be the lower side line. Again, since the currents are ``concentrated" on the surface, we have 
\[
\int_{\mathcal P} 4 \pi j  `` = "  \int_{ \{ x \in \p \O: x_1 =0, |x_2| < l \} } (\mathbf K \times n )_2 +  O(\e)
\]

Then integrate \ref{curlfieldB} over $\mathcal S$ and apply the Stokes' theorem we get
\[
\int_{\ell_+} B_2 - \int_{\ell_-}  B_2 =  \int_{ \{ x \in \p \O: x_1 =0, |x_2| < l \} } (\mathbf K \times n )_2 +  O(\e),
\]
so $B_2^+ - B_2^- = \mathbf (K \times n)_2$. And in the same way $B_1^+ - B_1^- = (\mathbf K \times n)_1$. Therefore we have
\Be
B_\parallel^+ - B_\parallel^- = \mathbf K \times n.
\Ee

Similarly, integrate \ref{curlfieldE} over $\mathcal S$ gives
\[
\int_{\ell_+} E_2 - \int_{\ell_-}  E_2 = O(\e) 
\]
Letting $\e \ll 1 $ we have $E_2^+ - E_2^- = 0$. And the same happens for $E_1$. Therefore 
\Be
E_\parallel^+ - E_\parallel^- = 0.
\Ee

In summary, we have on the boundary $\p \O$,
\Be
\begin{split}
E_3^+ - E_3^- = & \sigma,  \, E_\parallel^+ - E_\parallel^- =  0.
\\  B_3^+ - B_3^- = & 0,  \,  B_\parallel^+ - B_\parallel^- = \mathbf K \times n,
% \\  B_\parallel^+ - B_\parallel^- =  &\mathbf K \times n,  
% \\  E_\parallel^+ - E_\parallel^- = & 0.
\end{split}
\Ee
where $\rho$ is the surface charge density and $\mathbf K$ is the surface current density. And if the material in the lower half space is perfect conductor, i.e. the conductivity of the material is $\sim \infty$, then $E^- = 0$. And thus $\p_t B^- = - \nabla \times E^- = 0$. Thus if $B^-(0) = 0$, $B^-(t) = 0$ for all $t$. Or if we assume the lower half space is superconductor then the electromagnetic field vanishes inside it, i.e. $E^- = 0$, and $B^- = 0 $. Then we have on the boundary $\p \O$,
\Be
\begin{split}
E_3^+  = & \sigma,  \, E_\parallel^+  =  0.
\\  B_3^+  = & 0,  \,  B_\parallel^+  = \mathbf K \times n. 
\end{split}
\Ee

%where $\sigma(t,x) $ and $\mathbf K(t,x)  = \begin{bmatrix} \mathbf K_1(t,x)  \\ \mathbf K_2(t,x) \\ 0  \end{bmatrix} $ are given surface charge density and surface current density functions on $[0, T ] \times \p \O$.

\unhide

\hide
 the Maxwell's equations in a vacuum (in SI units):  
 \begin{align}
  \nabla_x \cdot E &=  \frac{1}{\epsilon_0} \rho,\label{divEfield}\\
  \nabla_x \times E&= -  \p_t B  , \label{curlfieldE}\\
     \nabla_x \cdot B &= 0, \label{divBfield}\\
  \nabla_x \times B &= \mu_0 J + \mu_0 \epsilon_0 \p_t E   .\label{curlfieldB} \end{align} 

\unhide

\section{The Glassey-Strauss representation in $\R^3$ (\cite{GS})}\label{section_GS}
In the whole space, $E$ and $B$ solve
 \begin{align}   
\p_t^2 E - \Delta_x E  &= - 4 \pi \nabla_x \rho -   4 \pi\p_t J,   \label{wave_eq_E}
\\
\p_t^2 B - \Delta_x B  &=  4 \pi  \nabla_x \times J , \label{wave_eq_B}
 \end{align} 
 with the initial data
 \Be\label{initialC} 
 \begin{split}
 E|_{t=0} = E_0,\ \ \p_t E|_{t=0} = \p_t E_0:= \nabla_x \times B_0 - 4\pi J|_{t=0}
  % \ \ \text{in} \ \ \O
  ,
  \\
 B|_{t=0} = B_0,\ \ \p_t B|_{t=0} = \p_t B_0:=  - \nabla_x \times E_0 % \ \ \text{in} \ \ \O
  .
 \end{split}
 \Ee 
 Obviously the wave equations suffer from the ``loss of derivatives'' of $(E,B)$ with respect to the regularity of the source terms $\rho$ and $J$. As Glassey mentions in his book \cite{Glassey}, the key idea of the Glassey-Strauss representation is replacing the derivatives $\p_t, \nabla_x$ by a geometric operator $T$ in \ref{def:T} and a kinetic transport operator $S$ in \ref{def:S}:
  \Be \label{pxptST}
\begin{split} 
 \p_t  = \frac{S- \hat{v} \cdot T}{1+ \hat v \cdot \o}, \ \ 
\p_i  = \frac{\o_i S}{ 1+ \hat v \cdot \o} + \left( \delta_{ij} - \frac{\o_i \hat{v}_j}{1+ \hat v \cdot \o}\right) T_j,
\end{split}\Ee
 while, for $\o= \o(x,y) = \frac{y-x}{|y-x|}$,
 \begin{align}
T_i &:= \p_i - \o_i \p t, \label{def:T}\\
S &:= \p_t + \hat v \cdot \nabla_x. \label{def:S}
\end{align} 
Note that 
\Be\label{T=y}
T_j f (t- |y-x| , y, v ) = \p_{y_j } [ f(t- |y-x|, y, v ) ],
\Ee
 which is a tangential derivative along the surface of a backward light cone \cite{Glassey}. On the other hand, the Vlasov equation \ref{VMfrakF} implies that 
 \Be\label{S=Lf_v}
 Sf= - \nabla_v \cdot [ (E + \hat v \times B- g \mathbf e_3)f].
 \Ee
  Therefore, in \cite{GS,Glassey}, they can take off the derivatives $T_j$, $S$ from $f$ using the integration by parts within the Green's formula of \ref{wave_eq_E}-\ref{wave_eq_B} by connecting the source terms to $f$ via \ref{charge}-\ref{current}. We refer to \cite{LS} for the recent development in the whole space case.

\section{Derivation of the Representations in a half space}
In this section we review the original Glassey-Strauss representation of $(E,B)$ in a whole space and then generalize the representation to the half space problem when the perfect conductor boundary condition \ref{PCBC}-\ref{BC:B||} of Definition \ref{def:PCBC} holds at the boundary $\p\O$. For the sake of simplicity, we may assume a single species case $\{\beta\}=\{1\}$ and $m_\beta= e_\beta=c=1$ by the renormalization.
 
 Consider the perfect conductor boundary condition of Definition \ref{def:PCBC}. We derive the representation of $E$ and $B$ satisfying the perfect conductor boundary condition at the boundary $\p\O$. We adopt convenient notations: $E= (E_\parallel, E_3) = (E_1, E_2, E_3)$, $B= (B_\parallel, B_3) = (B_1, B_2, B_3)$, $\nabla= (\nabla_\parallel, \p_3) = (\p_1,\p_2, \p_3)$, and 
\Be\label{def:bar}
\begin{split}
\bar{x}= (x_\parallel, -x_3)  \ \ \ \text{for} \ \ x=  (x_\parallel,  x_3)=  (x_1, x_2,  x_3).
\end{split}
\Ee
We refere to \cite{Guo_RV} for previous study on Vlasov equations in half space. We also refer to \cite{GSc1,GSc2,TTS} for the lower dimensional cases. 

\subsection{Tangential components of the Electronic field in a half space}\label{sec:Et}

From \ref{wave_eq_E}, \ref{PCBC}, and \ref{initialC}, we recall that, in $\O$,  
\Be \label{waveEparallel}
\begin{split}
\p_t^2 E_\parallel - \Delta_x E_\parallel = G_\parallel:=  -4 \pi \nabla_\parallel \rho - 4\pi \p_t J_\parallel   ,
\\ 
E_\parallel |_{t=0}=     E_{0\parallel}, \  \p_t E_\parallel |_{t=0} = \p_t E_{0\parallel}  ,
\end{split}
\Ee
and 
\Be\label{Dirichlet}
E_\parallel =    0 \ \   \text{ on }   \  \p \O.
\Ee

To solve the Dirichlet boundary condition, we employ the odd extension of the data: for $i=1,2$, and $x \in \R^3$,
\Be
\begin{split}\label{odd_ext}
G_i(t,x_\parallel, x_3) = &  \mathbf 1_{x_3 > 0 }  G_i(t,x ) -  \mathbf 1_{x_3 <  0 }  G_i(t,\bar x),
\\  E_{0i} ( x_\parallel, x_3) = &  \mathbf 1_{x_3 > 0 }   E_{0i} (  x) -  \mathbf 1_{x_3 <  0 }   E_{0i}( \bar x),
\\  \p_t  E_{0i} ( x_\parallel, x_3) = &  \mathbf 1_{x_3 > 0 } \p_t  E_{0i} (  x) -  \mathbf 1_{x_3 <  0 } \p_t  E_{0i}( \bar x).
\end{split}
\Ee
Then the weak solution of $E_\parallel(t,x)$ to \ref{waveEparallel} with data \ref{odd_ext} in the whole space $\R^3$ takes a form of, for $i=1,2,$ 
\begin{align}
 &E_i(t,x ) =    \frac{1}{4 \pi t^2} \int_{\p B(x;t) \cap \{ y_3 > 0 \} }  \left( t \p_t  E_{0i}( y  ) +  E_{0i}(y ) + \nabla  E_{0i} (y ) \cdot (y-x) \right) dS_y \notag
\\  &  \ \ \ \ \  + \frac{1}{4 \pi t^2} \int_{\p B(x;t) \cap \{ y_3 < 0 \} }  \big( - t \p_t  E_{0i}( \bar y  ) -  E_{0i} ( \bar y) 
%\notag\\ & \quad \quad \quad \quad \quad \quad \quad  \quad \quad \quad \quad 
 - \nabla  E_{0i} (\bar y) \cdot (\bar y - \bar x) % + \p_3  E_i (0,y_\parallel, - y_3) \cdot (y_3-x_3) 
\big)
 dS_y \notag
\\ & \ \ \ \  \ + \frac{1}{4 \pi } \int_{B(x;t)  \cap \{ y_3 > 0 \} }\frac{ G_i( t - |y-x|, y  ) }{|y-x| } dy \label{Eexpanmajor1}\\
&\ \  \ \ \  + \frac{1}{4 \pi } \int_{B(x;t)  \cap \{ y_3 < 0 \} }\frac{ -  G_i( t - |y-x|,  \bar y  ) }{|y-x| } dy, \label{Eexpanmajor2}
\end{align} 
where $B(x,t)= \{ y \in \R^3: |x-y| <t\}$ and $\p B(x,t)= \{ y \in \R^3: |x-y| =t\}$. Clearly the above form satisfies the zero Dirichlet boundary condition \ref{Dirichlet} at $x_3=0$ formally. From now one we regard the above form as the weak solution of \ref{waveEparallel}-\ref{Dirichlet}. The rest of task is to express \ref{Eexpanmajor1} and \ref{Eexpanmajor2}. 

%Clearly $E_i(t,x)$ is odd in $x_3$. Also note that as $x_3 \to 0 $, the regions of the integration $\{ y_3 < 0 \} $ and $\{ y_3 > 0 \} $ approaches to the same shape, and the difference of the values of integration over them goes to $0$. Thus, the restriction of $E_i(t,x)$ to the upper half space $\mathbb R^3_+$ will give the solution to \ref{waveEparallel}%-\ref{initialtildeE}.

\medskip 

 \textit{Expression of \ref{Eexpanmajor1}:} We follow the idea of the Glassey-Strauss (Section \ref{section_GS}). From \ref{charge}-\ref{current} and \ref{pxptST},  
\begin{align}
&\ref{Eexpanmajor1}=- \int_{B(x;t) \cap \{y_3 > 0\}}  \frac{(   \p_i \mathbf{\rho} +\p_t {J}_i )(t-|y-x|,y )}{|y-x|} \dd y
\notag\\ &\ \  =- \int_{B(x;t) \cap \{y_3 > 0\}}   \int_{\R^3} (\p_i f + \hat{v}_i \p_t f) (t-|y-x|,y ,v)  \dd v \frac{ \dd y}{|y-x|}\notag
\\ &\ \  =- \int_{B(x;t) \cap \{y_3 > 0\}}  \int_{\R^3} \frac{\o_i + \hat{v}_i}{1+ \hat{v} \cdot \o}(Sf) (t-|y-x|,y ,v) \dd v \frac{ \dd y}{|y-x|}\label{upperS}
\\ & \ \ \  \ \  - \int_{
\substack{
B(x;t) \\  \cap \{y_3 > 0\}
}
}   \int_{\R^3}  \left( \delta_{ij} - \frac{(\o_i + \hat{v}_i)\hat{v}_j}{1+\hat{v} \cdot \o} \right) T_j f (t-|y-x|,y ,v) \dd v \frac{ \dd y}{|y-x|}.  \label{upperT}
\end{align} 
Here, we followed the Einstein convention (when an index variable appears twice, it implies summation of that term over all the values of the index) and will do throughout the paper. 

For \ref{upperS}, we replace $Sf$ with \ref{S=Lf_v} and apply the integration by parts in $v$ to derive that \ref{upperS} equals
\Be \label{upperS1}
\begin{split}
 %&= -  \int_{B(x;t) \cap \{y_3 > 0\}}  \int_{\R^3} \frac{\o_i + \hat{v}_i}{1+ \hat{v} \cdot \o}(Sf) (t-|y-x|,y ,v) \dd v \frac{ \dd y}{|y-x|}
 % \\ = &  \int_{B(x;t) \cap \{y_3 > 0\}}  \int_{\R^3} \frac{\o_i + \hat{v}_i}{1+  \hat v \cdot \o }((E + \hat v \times B - g\mathbf e_3 ) \cdot \nabla_v f) (t-|y-x|,y ,v) \dd v \frac{ \dd y}{|y-x|}
%\\ = & 
%=  \int_{B(x;t) \cap \{y_3 > 0\}}  \int_{\R^3} \nabla_v \left( \frac{\o_i + \hat{v}_i}{1+  \hat v \cdot \o } \right) \cdot (E + \hat v \times B - g\mathbf e_3 )  f (t-|y-x|,y ,v) \dd v \frac{ \dd y}{|y-x|} \\ 
  \int_{B(x;t)\cap \{y_3 > 0\} }  \int_{\R^3}  a^E_i (   v ,\o ) \cdot (E + \hat v \times B - g\mathbf e_3 )  
 f (t-|y-x|,y ,v) \dd v \frac{ \dd y}{|y-x|},
\end{split}
\Ee
where
\Be \label{aE}
 a^E_i(   v,\o ) := \nabla_v \left( \frac{\o_i + \hat{v}_i}{1+  \hat v \cdot \o } \right) =  \frac{ (e_i - \hat v_i \hat v ) ( 1 + \hat v \cdot \o ) - ( \o_i + \hat v_i ) ( \o - (\o \cdot \hat v ) \hat v ) }{\langle v \rangle ( 1 + \hat v \cdot \o )^2 }.
\Ee

For \ref{upperT}, we replace $T_j f$ with \ref{T=y} and apply the integration by parts to get \ref{upperT} equals 
\Be \label{upperT1}
\begin{split}
 &-    \int_{ \p B(x;t) \cap \{y_3>0 \}}    \o_j  \left( \delta_{ij} - \frac{(\o_i + \hat{v}_i)\hat{v}_j}{1+\hat{v} \cdot \o} \right) f(0, y,v)  \dd v \frac{\dd S_y}{|y-x|}
 \\  &+  \int_{B(x;t) \cap \{y_3= 0\}}   \int_{\R^3}  \left( \delta_{i3} - \frac{(\o_i + \hat{v}_i)\hat{v}_3}{1+\hat{v} \cdot \o} \right)  f (t-|y-x|,y_\parallel, 0 ,v)  \dd v \frac{ \dd y_\parallel}{|y-x|} \\
  &+  \int_{\substack{
B(x;t) \\  \cap \{y_3 > 0\}
}}   \int_{\R^3} 
\frac{ (|\hat{v}|^2-1  )(\hat{v}_i +  \o_i ) }{  (1+  \hat v \cdot \o )^2}
  f (t-|y-x|,y ,v)  \dd v  \frac{\dd y}{|y-x|^2}.
\end{split}
\Ee
where we have used that, from \cite{GS,Glassey},
\Be\notag
 \frac{\p}{\p y_j}\left[\frac{1}{|y-x|}\left(\delta_{ij} - \frac{(\o_i + \hat{v}_i)\hat{v}_j}{1+\hat{v} \cdot \o} \right)\right]= \frac{ (|\hat{v}|^2-1  )(\hat{v}_i +  \o_i ) }{|y-x|^2 (1+  \hat v \cdot \o )^2}. 
\Ee

\medskip 

 \textit{Expression of \ref{Eexpanmajor2}:} In order to study the expression in the lower half space we modify the idea of Glassey-Strauss slightly. Define
%$ T^- = \begin{bmatrix}  T_1^- \\ T_2^- \\ - T_3^- \end{bmatrix} $, 
 \Be\label{def:o-}
 \bar \o   =  \begin{bmatrix} \o_1 & \o_2  & - \o_3 \end{bmatrix} ^T.
 \Ee
We use the same $S$ of \ref{def:S} but   
\Be\label{def:T-}
\begin{split}
 \bar{T}_3 f &=  -\p_{y_3} [f(t-|y-x|, y_\parallel, - y_3, v)] =   \p_{y_3} f - \bar{\omega}_3 \p_t f , 
\\ \bar T _i f &=  \p_{y_i} [f(t-|y-x|, y_\parallel, - y_3, v)] =   \p_{y_i} f - \bar{\omega}_i \p_t f \, \text{ for } \ i=1,2.
\end{split}
\Ee
Then we get 
\begin{align}
\p_t &=  \frac{S-   \hat{v} \cdot \bar{T} }{1+ \hat{v}  \cdot \bar{\omega} }, \label{ptST-}\\
 \p_{y_i} &=    \bar{T}_i  +   \bar{\o}_i  \frac{S-   \hat{v} \cdot \bar{T} }{1+ \hat{v}  \cdot \bar{\omega} } =  \frac{ \bar{\o}_i S }{1+ \hat{v}  \cdot \bar{\omega}   }
+ \bar{T}_i   -  \bar{\o}_i \frac{     \hat{v}   \cdot \bar{T} }{1+ \hat{v}  \cdot \bar\omega }.
 \label{pxST-}
\end{align}
%\Be 
%S- \p_t = \hat{v} \cdot \nabla  =\sum_{i=1,2}\hat{v}_i(T^-_i + \omega_i \p_t ) 
%+\hat{v}_3 (- T^-_3 - \omega_3 \p_t)
%\Ee 
Therefore, we derive
\hide\Be
\begin{split}
 \p_i + \hat{v}_i \p_t 
=& \frac{\o_i + \hat{v}_i }{1+ \hat{v}_\parallel \cdot \omega_\parallel - \hat{v}_3 \o_3}S
 + T^-_i  - (\o_i   + \hat{v}_i  )\frac{    \hat{v}_\parallel  \cdot T^-_\parallel - \hat{v}_3 T_3^-}{1+ \hat{v}_\parallel \cdot \omega_\parallel - \hat{v}_3 \o_3},
 \\
 \p_3 + \hat{v}_3 \p_t = & \frac{-\o_3 + \hat{v}_3 }{1+ \hat{v}_\parallel \cdot \omega_\parallel - \hat{v}_3 \o_3}S
- T^-_3  - (-\o_3   + \hat{v}_3  )\frac{    \hat{v}_\parallel  \cdot T^-_\parallel - \hat{v}_3 T_3^-}{1+ \hat{v}_\parallel \cdot \omega_\parallel - \hat{v}_3 \o_3}.
\end{split}
\Ee
Or
\unhide
\Be\label{ST_lower}
\begin{split}
  \p_i + \hat{v}_i \p_t %&=  \frac{    \bar{\o}_i + \hat{v}_i }{1+
% \hat v \cdot \bar{\o}
 % }S
% + \iota_i T^-_i  - (\bar{\o} _i  + \hat{v}_i  )\frac{  \sum_j  \hat{v}_j \iota_j T_j^- }{1+ \hat v \cdot \bar{\o}}\\
%  &=  \frac{ \bar{\o}_i+ \hat{v}_i }{1+
 %\hat v \cdot \o^-
%  }S
% +\sum_j \left(  \delta_{ij}  -  \frac{  \iota_i  \o_i \hat{v}_j   + \hat{v}_i \hat{v}_j     }{1+ \hat v \cdot \o^-}\right)\iota_j T_j^-, 
=  \frac{ \bar{\o}_i+ \hat{v}_i }{1+
  \hat v \cdot \bar\o  
  }S
  +  \left(  \delta_{ij}  -  \frac{     \bar{\o}_i \hat{v}_j   + \hat{v}_i \hat{v}_j     }{1+ \hat v \cdot \bar\o }\right)  \bar{T}_j . 
\end{split}\Ee

Now we consider \ref{Eexpanmajor2}. From \ref{ST_lower},  
\begin{align}
&\ref{Eexpanmajor2}=  \int_{B(x;t) \cap \{y_3 <0\}}  \int_{\R^3} (\p_i f + \hat{v}_i \p_t f) (t-|y-x|,\bar y ,v) \dd v \frac{ \dd y}{|y-x|}\notag
\\& \ \ =  \int_{B(x;t) \cap \{y_3 <0\}}  \int_{\R^3} \frac{ \bar{\o}_i + \hat{v}_i }{1+ \hat v \cdot \bar{\o} }(Sf) (t-|y-x|, \bar y ,v) \dd v \frac{ \dd y}{|y-x|}\label{lowerS}
\\& \ \ +  \int_{\substack{
B(x;t) \\  \cap \{y_3 < 0\}
} }  \int_{\R^3}   \Big(  \delta_{ij} - \frac{ \bar{\o}_i \hat{v}_j+ \hat{v}_i \hat{v}_j}{1+ \hat v \cdot \bar{\o}}   \Big)  \bar{T}_j f (t-|y-x|, \bar y ,v)  \dd v \frac{ \dd y}{|y-x|}.\label{lowerT}
\end{align} 
As getting \ref{upperS1}, we derive that, with $a_i^E$ of \ref{aE}, \ref{lowerS} equals 
\Be\label{lowerS1}
 \int_{B(x;t) \cap \{y_3 < 0\}}  \int_{\R^3}  a^E_i ( v ,\bar{\o}  ) \cdot (E + \hat v \times B - g\mathbf e_3 )  
 f (t-|y-x|,\bar y ,v) \dd v \frac{ \dd y}{|y-x|}.
\Ee

 For \ref{lowerT}, applying \ref{def:T-} and the integration by parts, we derive that \ref{lowerT} equals 
\Be \label{GSlowerhalf2}
\begin{split}
 &  \int_{ \p B(x;t) \cap \{  y_3 < 0 \}} \int_{\R^3}   \bar{\o}_j    \Big(  \delta_{ij} - \frac{   \bar{\o} _i \hat{v}_j + \hat{v}_i \hat{v}_j}{1+ \hat v \cdot \bar{\o} }   \Big)  f(0, \bar y ,v)   \dd v \frac{\dd S_y}{|y-x|} 
  \\ &+  \int_{B(x;t) \cap \{y_3 =0\}}  \int_{\R^3}  \iota_3   \Big( \delta_{i3} - \frac{\bar{\o}_i  \hat{v}_3 + \hat{v}_i  \hat{v}_3}{1+ \hat v \cdot \bar{\o} }   \Big)  f(t-|y-x|,y_\parallel, 0 ,v) \dd v  \frac{ \dd y}{|y-x|}\\
  &-  \int_{B(x;t) \cap \{y_3 <0\}}  \int_{\R^3}  
   \frac{ (|\hat{v}|^2-1  )(\hat{v}_i +   \bar{\o}_i ) }{ (1+ \hat v \cdot \bar{\o} )^2} 
   f(t-|y-x|, \bar y ,v) \dd v\frac{ \dd y}{|y-x|^2}, 
\end{split}
\Ee
where we have utilized the notation
\Be\label{iota}
\iota_i = +1 \ \ \text{ for } \ i=1,2,  \ \ 
\iota_3=-1,
\Ee
and the direct computation
\Be \label{DyT}
\iota_j 
\frac{\p}{\p y_j}\left[
 \frac{1}{|y-x|}
  \Big(
  \delta_{ij} - \frac{ \iota_i \o_i \hat{v}_j + \hat{v}_i \hat{v}_j}{1+ \hat v \cdot \bar \o} 
  \Big)\right]%\\
% =&  \frac{1}{|y-x|^2 (1+ \hat v \cdot \o^-)^2}
% \Big\{ 
% |\hat{v}|^2 \hat{v}_i  + |\hat{v}|^2 \iota_i \o_i 
% - \hat{v}_i - \iota_i \o_i
% \Big\}\\
 =   \frac{ (|\hat{v}|^2-1  )(\hat{v}_i +   \bar{\o}_i ) }{|y-x|^2 (1+ \hat v \cdot \bar{\o} )^2}.
% \Big\{ 
% |\hat{v}|^2 \hat{v}_i  + |\hat{v}|^2 \iota_i \o_i 
% - \hat{v}_i - \iota_i \o_i
% \Big\} 
\Ee
\hide

Now we compute $\sum_j \iota_j \frac{\p}{\p y_j}\left[ \frac{1}{|y-x|}  \Big( \delta_{ij} - \frac{ \iota_i \o_i \hat{v}_j + \hat{v}_i \hat{v}_j}{1+ \hat v \cdot \o^-}   \Big)\right] $. Note that 
\[
\p_{y_j} |y-x|= \frac{(y-x)_j}{|y-x|}, \ \ \ 
\p_{y_j} \o_i = \frac{1}{|y-x|} \left(
\delta_{ij} - \frac{(y-x)_i (y-x)_j}{|y-x|^2}
\right)
 \]
We have

where we have computed 
\Be
\begin{split}
 \sum_{j} \bigg\{& - \iota_j |y-x|^{-2}  \o_j  \Big(
  \delta_{ij} - \frac{ \iota_i  \o_i \hat{v}_j + \hat{v}_i \hat{v}_j}{1+ \hat v \cdot \o^-} 
  \Big)\\
 &+ \iota_j  |y-x|^{-1} (-1)^2 [1+ \hat v \cdot \o^- ]^{-2} 
  \left(\sum_k
 \hat{v}_k \iota_k \p_{y_j} \o_k
 \right)
 \left( \iota_i \o_i \hat{v}_j + \hat{v}_i \hat{v}_j \right)\\
 &+ \iota_j |y-x|^{-1} (-1) [1+ \hat v \cdot \o^-]^{-1}
 \iota_i \p_{y_j} \o_i \hat{v}_j \bigg\}\\
 \end{split}
\Ee
\Be
\begin{split}
 = \frac{1}{|y-x|^2 [1+ \hat v \cdot \o^-]^2}\sum_{j} \bigg\{&
 -[1+ \hat v \cdot \o^-]^2 \delta_{ij} \iota_j \o_j
 + [1+ \hat v \cdot \o^-] \{ \iota_i \o_i \iota_j\hat{v}_j + \hat{v}_i \iota_j\hat{v}_j\}   \o_j \\
 &+ \{ \iota_i \o_i  \iota_j\hat{v}_j + \hat{v}_i \iota_j \hat{v}_j\}
 \sum_k \iota_k \hat{v}_k (\delta_{kj} - \o_k \o_j) \\
 &- [1+ \hat v \cdot \o^-] \iota_j \hat{v}_j ( \iota_i\delta_{ij} - \iota_i \o_i \o_j )
 \bigg\}\\
\end{split}
\Ee
Now the summation equals
\Be
\begin{split}
& (\hat v \cdot \o^- ) ^2\hat{v}_i  - (\hat v \cdot \o^-)  ^2\hat{v}_i  + |\hat{v}|^2 \hat{v}_i\\
&-  (\hat v \cdot \o^-)^2 \iota_i \o_i + (\hat v \cdot \o^-)^2 \iota_i \o_i
+ (\hat v \cdot \o^-) \hat{v}_i 
+|\hat{v}|^2 \iota_i \o_i -(\hat v \cdot \o^-)^2 \iota_i \o_i 
- (\hat v \cdot \o^-) \hat{v}_i  + (\hat v \cdot \o^-)^2 \iota_i \o_i \\
&- 2(\hat v \cdot \o^-) \iota_i \o_i +(\hat v \cdot \o^-) \iota_i \o_i 
- (\iota_i)^2 \hat{v}_i  + (\hat v \cdot \o^-) \iota_i \o_i \\
&- \iota_i \o_i \\
=&  |\hat{v}|^2 \hat{v}_i  + |\hat{v}|^2 \iota_i \o_i- (\iota_i)^2 \hat{v}_i - \iota_i \o_i .
\end{split}
\Ee

\unhide

\subsection{Normal components of the Electronic field in a half space}
From \ref{wave_eq_E}, \ref{BC:E_3} and \ref{initialC}, we have 
\Be \label{waveE3}
\begin{split}
\p_t^2 E_3 - \Delta_x E_3 = G_3:=   -4 \pi \p_3 \rho - 4\pi \p_t J_3  , 
\\ E_3 |_{t=0}=     E_{03}, \  \p_t E_3 |_{t=0} = \p_t E_{03}    ,
\end{split}
\Ee
and 
\Be 
  \p_3 E_3 =    4 \pi \rho   \ \   \text{ on } \ \  \p \O. \label{Neumann}
\Ee
It is convenient to decompose the solution into two parts: one with the Neumann boundary condition of \ref{waveE3} and the zero forcing term and initial data
\Be \label{waveE3b}
\begin{split}
\p_t^2 w  - \Delta_x w  =  0     \ \  &\text{ in } \ \  \O,
\\ w  |_{t=0} =  0, \ \p_t w |_{t=0}  = 0   \ \  &\text{ in } \ \  \O,
\\ \p_3 w= 4 \pi \rho   \ \  &\text{ on } \ \  \p \O,
\end{split}\Ee
and the other part $\tilde E_3$ with the initial data of \ref{waveE3} and the zero Neumann boundary condition. We achieve it by the even extension trick. Recall $\bar x$ in \ref{def:bar}. For $x \in \R^3$, define \Be \label{wavetildeE32}
\begin{split}
G_3(t,x) = & \mathbf{1}_{x_3 > 0 } G_3(t,x ) +   \mathbf{1}_{x_3 < 0 } G_3(t,\bar x )  , 
\\  E_{03} (x)  = &  \mathbf 1_{x_3 > 0 }    E_{03  } (x ) +  \mathbf 1_{x_3 <  0 }    E_{03 } ( \bar x ),
\\  \p_t    E_{03} (x)   = &  \mathbf 1_{x_3 > 0 } \p_t  E_{03} ( x ) +  \mathbf 1_{x_3 <  0 } \p_t  E_{03}( \bar x ).
\end{split}
\Ee
The weak solution $\tilde E_3$ to \ref{waveE3} with the data \ref{wavetildeE32} in the whole space $\R^3$ take a form of  
\begin{align}
\tilde E_3(t,x ) &=    \frac{1}{4 \pi t^2} 
\int_{
\substack{\p B(x;t)\\ \cap \{ y_3 > 0 \}} }  \left( t \p_t  E_{03}( y ) +  E_{03}(y ) + \nabla  E_{03} (y ) \cdot (y-x) \right) dS_y\notag
\\  &   + \frac{1}{4 \pi t^2} \int_{ \substack{\p B(x;t) \\ \cap \{ y_3 < 0 \} }}  \big(  t \p_t  E_{03}( \bar y  ) +  E_{03} (\bar y) 
  + \nabla  E_{03} ( \bar y ) \cdot (\bar y - \bar x  )   \big) dS_y\notag
\\ &  + \frac{1}{4 \pi } \int_{ \substack{ B(x;t) \\ \cap \{ y_3 > 0 \}} }\frac{ G_3( t - |y-x|, y  ) }{|y-x| } dy \label{tildeE3formula1}\\
&   + \frac{1}{4 \pi } \int_{\substack{ B(x;t)  \\  \cap \{ y_3 < 0 \} }}\frac{  G_3( t - |y-x|, \bar y ) }{|y-x| } dy. \label{tildeE3formula2}
\end{align} 
Following the same argument to expand \ref{Eexpanmajor1} into \ref{upperS}-\ref{upperT1} and \ref{Eexpanmajor2} into \ref{lowerS}-\ref{GSlowerhalf2}, we derive that 
\Be \label{E3expression1}
\begin{split}
&\ref{tildeE3formula1}\\
& =  \int_{\substack{ B(x;t)  \\ \cap \{y_3 > 0\}}}  \int_{\R^3} a^E_3 ( v ,\o ) \cdot (E + \hat v \times B - g\mathbf e_3 )  
 f (t-|y-x|,y ,v) \dd v \frac{ \dd y}{|y-x|} 
% - \int_{B(x;t) \cap \{y_3 > 0\}}  \int_{\R^3} \frac{\o_3 + \hat{v}_3}{1+ \hat{v} \cdot \o}(Sf) (t-|y-x|,y ,v) \dd v \frac{ \dd y}{|y-x|}
 \\  & + \int_{B(x;t) \cap \{y_3 > 0\}}   \int_{\R^3}\frac{ (|\hat{v}|^2-1  )(\hat{v}_3 +  \o_3 ) }{ (1+  \hat v \cdot \o )^2} f (t-|y-x|,y ,v)  \dd v  \frac{ \dd y}{|y-x|^2}
 \\ & -  \int_{ \p B(x;t) \cap \{  y_3>0 \}}   \o_j  \left( \delta_{3j} - \frac{(\o_3 + \hat{v}_3)\hat{v}_j}{1+\hat{v} \cdot \o} \right) f(0, y,v)  \dd v \frac{\dd S_y}{|y-x|}
 \\ & + \int_{B(x;t) \cap \{y_3= 0\}}   \int_{\R^3}  \left(  1 - \frac{(\o_3 + \hat{v}_3)\hat{v}_3}{1+\hat{v} \cdot \o} \right)  f (t-|y-x|,y_\parallel, 0 ,v)  \dd v \frac{ \dd y_\parallel}{|y-x|},
\end{split}
\Ee 
\Be
\begin{split}\label{E3expression2}
&  \ref{tildeE3formula2}\\ 
&=   - \int_{ \substack{B(x;t) \\  \cap \{y_3 < 0\}}}  \int_{\R^3}  a^E_3 ( v ,\bar{\o}  ) \cdot (E + \hat v \times B - g\mathbf e_3 )  
 f (t-|y-x|, \bar y ,v) \dd v \frac{ \dd y}{|y-x|}
\\
&+ \int_{B(x;t) \cap \{y_3 <0\}}  \int_{\R^3}  
   \frac{ (|\hat{v}|^2-1  )(\hat{v}_3 +   \bar{\o}_3 ) }{ (1+ \hat v \cdot \bar{\o} )^2} 
   f(t-|y-x|,\bar y  ,v) \dd v\frac{ \dd y}{|y-x|^2} \\
&- \int_{\p B(x;t) \cap \{  y_3 < 0 \}} \int_{\R^3}   \bar{\o}_j    \Big(  \delta_{3j} - \frac{   \bar{\o} _3 \hat{v}_j + \hat{v}_3 \hat{v}_j}{1+ \hat v \cdot \bar{\o} }   \Big)  f(0,\bar y ,v)   \dd v \frac{\dd S_y}{|y-x|} 
  \\ &+  \int_{B(x;t) \cap \{y_3 =0\}}  \int_{\R^3}     \Big( 1- \frac{ (\bar{\o}_3   + \hat{v}_3 ) \hat{v}_3}{1+ \hat v \cdot \bar{\o} }   \Big)  f(t-|y-x|,y_\parallel, 0 ,v) \dd v  \frac{ \dd y}{|y-x|}.
\end{split}
\Ee
Note that the weak derivative $\p_3$ to the form of $\tilde{E}_3$ solves the linear wave equation \ref{waveE3} with oddly extended forcing term and the initial data in the sense of distributions. From the argument of Section \ref{sec:Et}, we conclude that 
\Be\label{p3tildeE3=0}
\p_3  \tilde E_3=0 \ \   \text{ on } \ \  \p \O.
\Ee

\subsubsection{Wave equation with the Neumann boundary condition} Now we consider \ref{waveE3b}. We assume $\rho(t,x)$ for all $t\leq 0$, which implies $w(t,x)=0$ for all $t\leq 0$. Define the Laplace transformation:
\Be
W(p, x) = \int^\infty_{- \infty} e^{-p t} w (t,x ) dt, \ \  \  \ 
R(p, x) = \int^\infty_{- \infty} e^{-p t} \rho (t,x ) dt
.\label{LaplaceT}
\Ee
Then $W$ solves the Helmholtz equation with the same Neumann boundary condition:
\Be \label{Helmholtz}
\begin{split}
p^2  W - \Delta_x   W  =  0 & \ \  \text{ in }   \  \O,
\\ \p_n  W =  4 \pi R    & \ \  \text{ on }   \  \p \O.
\end{split}
\Ee
The solution for $( p^2 - \Delta _x  ) \Phi(x) =   \delta(x) $ in $\mathbb R^3$ is known as $  \frac{1}{4\pi } \frac{e^{\pm p |x| }}{ |x| }$. We choose \Be \label{fundaHelmPhi}
 \Phi(x) =  \frac{1}{4\pi } \frac{e^{- p |x| }}{ |x| }.
\Ee
We have the following identities:
\begin{lemma}
Suppose $u \in C^2(\bar \O )$ is an arbitrary function. For a fixed $x \in \O$ and $\Phi$ in \ref{fundaHelmPhi}, we have
\Be \label{uPhiintrep}
\begin{split}
u(x) &=   \int_\O \Phi(y-x) (   p^2- \Delta_x ) u (y) dy \\
&+ \int_{\p \O } \left[  \Phi(y-x) \p_n u(y) - u(y) \p_n \Phi(y-x) \right] dS_y.\end{split}
\Ee
\end{lemma}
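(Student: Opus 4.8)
\section*{Proof proposal}

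The identity \ref{uPhiintrep} is the third Green identity (the Green representation formula) for the operator $p^2-\Delta_x$ on the half space, so the plan is to reduce it to the classical Green second identity by excising the singularity of $\Phi$ and truncating the unbounded domain. Fix $x\in\O$, and for $0<\e\ll 1\ll R$ set $\O_{\e,R}:=(\O\cap B(0,R))\setminus\overline{B(x,\e)}$. For $\e$ small enough $B(x,\e)\subset\O$, so $y\mapsto\Phi(y-x)$ is $C^\infty$ on a neighborhood of $\overline{\O_{\e,R}}$ and, by \ref{fundaHelmPhi} together with $(p^2-\Delta_z)\frac{e^{-p|z|}}{4\pi|z|}=0$ for $z\neq 0$, it solves $(p^2-\Delta_y)\Phi(y-x)=0$ there. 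Applying the classical Green second identity $\int_{\O_{\e,R}}(u\,\Delta\Phi-\Phi\,\Delta u)\,\dd y=\int_{\p\O_{\e,R}}(u\,\p_n\Phi-\Phi\,\p_n u)\,\dd S_y$ (with $\p_n$ the outward normal derivative of $\O_{\e,R}$) and using $\Delta_y\Phi(y-x)=p^2\Phi(y-x)$ to rewrite the left side as $\int_{\O_{\e,R}}\Phi(y-x)(p^2-\Delta_x)u(y)\,\dd y$, the whole matter reduces to analysing the three pieces of $\p\O_{\e,R}$ as $\e\to 0^+$ and $R\to\infty$.

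The boundary $\p\O_{\e,R}$ splits into the flat part $\p\O\cap B(0,R)$, the exterior cap $\Sigma_R:=\O\cap\p B(0,R)$, and the small sphere $\p B(x,\e)$. On $\p B(x,\e)$ the outward unit normal of $\O_{\e,R}$ points \emph{toward} $x$, i.e. $n=(x-y)/|x-y|$; inserting the expansions $\Phi(y-x)=\frac{1}{4\pi\e}+O(1)$ and $\p_n\Phi(y-x)=\frac{1}{4\pi\e^2}+O(\e^{-1})$ valid for $|y-x|=\e$, together with $u(y)=u(x)+O(\e)$ and $|\p_n u|\le\|\nabla u\|_{L^\infty}$, one finds $\int_{\p B(x,\e)}(u\,\p_n\Phi-\Phi\,\p_n u)\,\dd S_y=u(x)+O(\e)\to u(x)$ as $\e\to 0^+$, since $\p B(x,\e)$ has area $4\pi\e^2$. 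On the cap $\Sigma_R$ the integrand is controlled by $(|u|+|\nabla u|)(|\Phi|+|\nabla\Phi|)\lesssim(|u|+|\nabla u|)\,\frac{e^{-pR}}{R}$ for $|y-x|\sim R$, so provided $u$ and $\nabla u$ grow at most polynomially --- which is the case for the Laplace transforms $W,R$ in \ref{LaplaceT} entering \ref{Helmholtz}, with $\mathrm{Re}\,p>0$ --- this term is $O(R^k e^{-pR})\to 0$ as $R\to\infty$. Finally, on $\p\O\cap B(0,R)$ the exponential decay of $\Phi$ and $\nabla\Phi$ makes $\int_{\p\O}|u\,\p_n\Phi-\Phi\,\p_n u|\,\dd S_y<\infty$, so this piece converges to $\int_{\p\O}(u\,\p_n\Phi-\Phi\,\p_n u)\,\dd S_y$. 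Collecting the three limits in the Green identity gives $\int_\O\Phi(y-x)(p^2-\Delta_x)u(y)\,\dd y=u(x)+\int_{\p\O}(u\,\p_n\Phi-\Phi\,\p_n u)\,\dd S_y$, which rearranges to \ref{uPhiintrep}.

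The only genuine step is the $\e\to 0^+$ computation on $\p B(x,\e)$: this is the textbook argument isolating the Dirac mass of the fundamental solution, and the one point to watch is the orientation of the normal on the punctured sphere --- it points \emph{into} $B(x,\e)$, which is exactly what produces $+u(x)$ rather than $-u(x)$. The secondary technical point, vanishing of the cap contribution at infinity, is where the admissibility of $u$ enters; it is automatic in the paper's setting because $\Phi$ decays exponentially for $\mathrm{Re}\,p>0$, while $u=W$ (or $R$), being built from the Laplace transform of a spatially decaying, eventually-zero-in-time $w$, carries no exponential growth. Beyond these two observations the argument consists only of routine estimates.
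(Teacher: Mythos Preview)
Your proof is correct and follows essentially the same route as the paper: excise the singularity of $\Phi$ at $x$ by removing $B(x,\e)$, apply Green's second identity on the punctured region, and compute the limit $\e\to 0^+$ of the small-sphere boundary terms to recover $u(x)$. The only difference is that you additionally truncate $\O$ by $B(0,R)$ and verify the cap contribution vanishes as $R\to\infty$, a point the paper's proof leaves implicit.
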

\begin{proof}The proof is rather standard. Fix $x \in \O$. Let $0 < \e \ll 1 $, and $B(x,\e)$ be a ball centered at $x$ with radius $\e$ such that $B(x, \e ) \subset \O$. Let $V_\e = \O - B(x,\e)$. Then, by the integration by parts,
\Be \notag
\begin{split}
& - \int_{V_\e} \Phi(y-x ) ( \Delta_y - p^2 ) u(y) dy 
\\ 
%&=    \int_{V_\e} u(y)  ( \Delta_y - p^2 )  \Phi(y-x )   dy - \int_{V_\e} \Phi(y-x ) ( \Delta_y - p^2 ) u(y) dy
%\\ &=    \int_{V_\e}  - \nabla u(y)   \cdot \nabla \Phi(y-x ) dy +  \int_{V_\e}  \nabla u(y)   \cdot \nabla \Phi(y-x ) dy  , 
%\\ &=  \int_{\p V_\e}  u(y)   \p_n \Phi(y-x ) dS_y -  \int_{\p V_\e}    \Phi(y-x )   \p_n  u(y)dS_y
%\\ 
&=    \int_{ \p \O }  u(y)   \p_n \Phi(y-x ) dS_y +  \int_{\p B(x,\e) }  u(y)   \p_n \Phi(y-x ) dS_y 
\\ & \ \ -  \int_{\p \O}    \Phi(y-x )   \p_n  u(y)dS_y - \int_{\p B(x, \e) }    \Phi(y-x )   \p_n  u(y)dS_y.
\end{split}
\Ee
From \ref{fundaHelmPhi}, $
 \int_{\p B(x, \e) }    \Phi(y-x )   \p_n  u(y)dS_y \lesssim 4\pi \e^2 \frac{e^{|p| \e }}{4 \pi \e }  \to 0,$ as $\e \to 0.$ 
And by direct computation, 
\Be\notag
\begin{split}
 &\int_{\p B(x,\e) }  u(y)   \p_n \Phi(y-x ) dS_y =     \int_{\p B(x,\e) }  u(y)  \frac{-(y-x)}{|y-x| } \cdot  \nabla \Phi(y-x ) dS_y 
 \\ = & \frac{1}{4 \pi }  \int_{\p B(x,\e) }  u(y)  \frac{-(y-x)}{|y-x| } \cdot  ( - i p |y-x | - 1 ) \frac{ e^{- i p |y-x | } (y-x) }{|y-x |^3 } dS_y 
 \\ = &  \frac{1}{4 \pi }  \int_{\p B(x,\e) } \left( - (- p |y-x| - 1 ) \frac{ e^{- p|y-x |  }}{|y-x |^2 } \right) u(y)   dS_y
  \\ = &   \left( ( 1 - (- p) \e ) e^{- p \e} \right)  \left(  \frac{1}{4 \pi \e^2  }   \int_{\p B(x,\e) } u(y)   dS_y \right)
  \to   u(x), \text{ as } \e \to 0.
\end{split}
\Ee
Combining all together, and letting $\e \to 0$ we get \ref{uPhiintrep}.\end{proof}

Next for $x \in \O$, let $\phi^x(y)$ be the function such that 
\Be \label{phixyforO}
\begin{split}
(\Delta_y  - p^2 ) \phi_N^x(y) =  0 \ \  &\text{ in }  \ \ \O,
\\ \p_n \phi_N^x(y) =   \p_n \Phi(y-x ) \ \  &\text{ on } \ \  \p \O.
\end{split}
\Ee 
The integration by parts implies
\Be \label{phixyintbyparts}
\begin{split}
0 = & \int_{\O} ( \Delta_y - p^2 ) \phi_N^x(y) u(y) dy 
\\ = & \int_{\O } ( \Delta_y - p^2 )u(y)  \phi_N^x(y) dy + \int_{\p \O } [ \p_n \phi_N^x(y) u(y)  - \phi_N^x(y) \p_n u(y) ] dS_y
\\ = & \int_{\O } ( \Delta_y - p^2 )u(y)  \phi_N^x(y) dy + \int_{\p \O } [ \p_n  \Phi(y-x ) u(y)  - \phi_N^x(y) \p_n u(y) ] dS_y.
\end{split}
\Ee
By adding \ref{phixyintbyparts} to \ref{uPhiintrep}, we derive that 
\Be \label{urepPhi2}
\begin{split}
u(x) = &  - \int_{\O} \left( \Phi(y-x) -\phi_N^x(y)     \right) ( \Delta_y - p^2 )u(y) dy\\
& + \int_{\p \O } ( \Phi(y-x) -\phi_N^x(y)  ) \p_n u(y) dS_y.
\end{split}
\Ee
For the half space $\O = \mathbb R_+^3$, we have, with $\bar x$ in \ref{def:bar},  
\Be \label{phiNxyformhalf}
\phi_N^x(y) = - \Phi(y- \bar x ) .
\Ee
\hide
Since $x,y \in  \mathbb R_+^3$, so $y - \tilde x \neq 0$, and $(\Delta_y  - p^2 ) ( - \Phi(y- \tilde x )) = 0$ for all $y \in  \mathbb R_+^3$. Also, by direct computation
\[
\p_{y_3}   \Phi(y-  x ) =  \frac{1}{4 \pi } \left(  \frac{(\pm p |y-x|  -1 ) e^{\pm p |y-x | } (y_3 - x_3 )  }{ |y-x |^3 }  \right).
\]
For $y \in \p \O$, $y_3 = 0$ and $|y - x | = |y - \tilde x | $, thus
\Be
\begin{split}
\p_{y_3} \phi_N^x(y) |_{y_3 = 0 }  = - \p_{y_3} \Phi(y- \tilde x )   |_{y_3 = 0 }  = & - \frac{1}{4 \pi } \left(  \frac{(\pm p |y- \tilde x|  -1 ) e^{\pm p |y-\tilde x | } (y_3 + x_3 )  }{ |y-\tilde x |^3 }  \right)  |_{y_3 = 0 } 
\\ = &  - \frac{1}{4 \pi } \left(  \frac{(\pm p |y-  x|  -1 ) e^{\pm p |y- x | }  x_3   }{ |y- x |^3 }  \right) 
\\ = &  \p_{y_3}   \Phi(y-  x )  |_{y_3 = 0 },
\end{split} 
\Ee
and we proved \ref{phiNxyformhalf}. \unhide

Finally, we derive that, from \ref{urepPhi2} and \ref{phiNxyformhalf}:
\begin{lemma} For $\O = \R^3_+$, and $\Phi$ in \ref{fundaHelmPhi}, 
\Be
\begin{split}\label{formula:u}
u(x) =& - \int_{\O} \left(  \Phi(y-x) + \Phi(y-\bar x)  \right) ( \Delta_y - p^2 )u(y) dy\\
& + \int_{\p \O } ( \Phi(y-x) + \Phi(y-\bar x)  ) \p_n u(y) dS_y.
\end{split}
\Ee
\end{lemma}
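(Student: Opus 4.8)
The plan is to construct the auxiliary function $\phi_N^x$ of \eqref{phixyforO} explicitly by the method of images, verify formula \eqref{phiNxyformhalf}, and then simply substitute it into the general identity \eqref{urepPhi2}. Thus the real content of the Lemma is the identity $\phi_N^x(y) = -\Phi(y-\bar x)$ for the half space; once that is in hand, plugging it into \eqref{urepPhi2} turns the difference $\Phi(y-x) - \phi_N^x(y)$ into the sum $\Phi(y-x) + \Phi(y-\bar x)$ in both the volume and the boundary integral, which is exactly \eqref{formula:u}.

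To verify \eqref{phiNxyformhalf} I would proceed in two steps. First, because $x \in \mathbb R^3_+$ we have $\bar x \in \mathbb R^3_-$ (recall \eqref{def:bar}), so $y - \bar x \neq 0$ for every $y \in \bar\Omega$; hence $y \mapsto -\Phi(y-\bar x)$ is smooth on a neighborhood of $\bar\Omega$ and, since $\bar x$ lies outside $\bar\Omega$, it satisfies $(\Delta_y - p^2)\big(-\Phi(y-\bar x)\big) = 0$ in $\Omega$, which is the first line of \eqref{phixyforO}. Second, I would check the Neumann condition by a direct differentiation of \eqref{fundaHelmPhi}: one has $\nabla_y \Phi(y-z) = \tfrac{1}{4\pi}\,\big(-p|y-z| - 1\big)\,\tfrac{e^{-p|y-z|}}{|y-z|^3}\,(y-z)$, so $\partial_{y_3}\Phi(y-z)$ carries the factor $(y_3 - z_3)$. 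On $\partial\Omega = \{y_3 = 0\}$ one has $|y-x| = |y-\bar x|$, while $(y_3 - x_3)|_{y_3=0} = -x_3 = -(y_3 - \bar x_3)|_{y_3=0}$, whence
\[
\partial_{y_3}\Phi(y-\bar x)\big|_{y_3 = 0} = -\,\partial_{y_3}\Phi(y-x)\big|_{y_3 = 0}.
\]
Since the outward unit normal of $\Omega = \mathbb R^3_+$ is $n = -\mathbf e_3$, i.e. $\partial_n = -\partial_{y_3}$, this gives $\partial_n\big(-\Phi(y-\bar x)\big) = \partial_{y_3}\Phi(y-\bar x) = -\partial_{y_3}\Phi(y-x) = \partial_n\Phi(y-x)$ on $\partial\Omega$, which is the second line of \eqref{phixyforO}. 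This establishes \eqref{phiNxyformhalf}, and the Lemma follows from \eqref{urepPhi2}.

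I do not expect a serious obstacle here: the argument is essentially a textbook image construction. The one point needing genuine care is the sign bookkeeping, where the outward normal $n = -\mathbf e_3$ and the reflection $x \mapsto \bar x$ combine so that $\Phi(y-x) - \phi_N^x(y)$ becomes a \emph{sum} rather than a difference; getting the branch of the fundamental solution ($e^{-p|x|}$ versus $e^{+p|x|}$) and the normal direction simultaneously consistent is where a slip would occur. I would also note, for completeness, that one should keep the standing hypothesis $\operatorname{Re} p > 0$ in mind so that $\Phi$ in \eqref{fundaHelmPhi} decays at infinity, which is what legitimizes the integrations by parts underlying \eqref{uPhiintrep} and \eqref{phixyintbyparts} (the contributions from large spheres vanish in the limit); and that no uniqueness theorem for \eqref{phixyforO} is required, since \eqref{urepPhi2} was derived for \emph{any} $\phi_N^x$ satisfying \eqref{phixyforO}, so it suffices to exhibit the one given by $-\Phi(y-\bar x)$.
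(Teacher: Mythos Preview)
Your proposal is correct and follows exactly the paper's approach: the lemma is obtained by substituting the image-charge identity \eqref{phiNxyformhalf}, $\phi_N^x(y)=-\Phi(y-\bar x)$, into the general representation \eqref{urepPhi2}. Your verification of \eqref{phiNxyformhalf} (smoothness in $\bar\Omega$ since $\bar x\notin\bar\Omega$, plus the direct check of the Neumann condition via $\partial_{y_3}\Phi(y-\bar x)|_{y_3=0}=-\partial_{y_3}\Phi(y-x)|_{y_3=0}$) matches the paper's computation, and your remarks on the sign bookkeeping and on not needing uniqueness for \eqref{phixyforO} are accurate.
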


By applying \ref{phiNxyformhalf} to \ref{Helmholtz}, we derive that  
\Be
\begin{split}\label{form_W}
W(p,x) &=   \int_{\p \O } ( \Phi(y-x) + \Phi(y-\bar x)  ) 4 \pi R (y) d S_y
\\  &=  2 \int_{\mathbb R^2 }\frac{e^{-p ( (y_1 - x_1)^2 +(y_2 - x_2 )^2 + x_3^2  )^{1/2}}}{ (y_1 - x_1)^2 +((y_2 - x_2 )^2 + x_3^2  )^{1/2}} R(y_1, y_2 ) d y_1 dy_2 .
\end{split}
\Ee
Using the inverse Laplace transform, we derive that 
%\Be \label{wavezNeumannsol}
%w(t,x) = \frac{1}{2\pi } \int_{-\infty}^\infty e^{(p_1 + i p_2 )t} W( p_1 + i p_2 ,x ) dp_2 =  \frac{1}{2\pi } \int_{-\infty}^\infty e^{(p_1 + i p_2 )t} \left( V(\mu + i k) \tilde h( \mu + i k, \cdot )  \right) dk.
%\Ee
%
%
%Combing \ref{wavezNeumann}, \ref{wavezNeumannsol}, with $ p = \mu + ik$ for $\mu \ge 0$, we get
\Be \label{wexpression1}
\begin{split}
&w(t,x) =   \frac{1}{2\pi } \int_{-\infty}^\infty e^{(p_1 + i p_2 )t} W( p_1 + i p_2 ,x ) dp_2\\
% & =   \frac{1}{4\pi^2 } \int_{-\infty}^\infty e^{(\mu + ik )t} \left(   \int_{\mathbb R^2 }\frac{e^{ -(\mu + ik) ( (y_1 - x_1)^2 +(y_2 - x_2 )^2 + x_3^2  )^{1/2}}}{ (y_1 - x_1)^2 +((y_2 - x_2 )^2 + x_3^2  )^{1/2}}  \tilde h( \mu + i k ,y_1,y_2) d y_1 dy_2  \right) dk 
%\\ 
& \ \ =    \frac{1}{ \pi   } \int_{-\infty}^\infty d p_2  \  e^{(p_1 + i p_2 )t}    \int_{\mathbb R^2 }d y_1 dy_2\frac{e^{ -( p_1 + i p_2) ( (y_1 - x_1)^2 +(y_2 - x_2 )^2 + x_3^2  )^{1/2}}}{ (y_1 - x_1)^2 +((y_2 - x_2 )^2 + x_3^2  )^{1/2}} \\
& \ \ \ \ \ \       \times    \int_{-\infty}^{\infty}  ds  \   e^{-( p_1 + i p_2) s } (- \rho(s,y_1,y_2 ))     .
\end{split}
\Ee
Finally, we derive that, using the identity $\int_{-\infty}^\infty e^{i p_2 t } d p_2 = 2 \pi \delta(t) $, 
\Be \label{wexpression2}
\begin{split}
 &w(t,x)   = \frac{- 1}{ \pi   } \int_{\mathbb R^2 } \int_\R \int_\R \frac{e^{( p_1 + i p_2 )(t -s -  \sqrt{| y_\parallel - x_\parallel |^2 + x_3^2 } )  }}{ \sqrt{| y_\parallel - x_\parallel |^2 + x_3^2 } }  \rho( s, y_\parallel ) dp_2  ds dy_\parallel 
\\  &= -  2  \int_{\mathbb R^2 } \int_\R  \frac{e^{p_1  ( t-s -  \sqrt{| y_\parallel - x_\parallel |^2 + x_3^2 } ) }  \delta(t-s -  \sqrt{| y_\parallel - x_\parallel |^2 + x_3^2 } ) }{ \sqrt{| y_\parallel - x_\parallel |^2 + x_3^2 } } \rho ( s, y_\parallel ) ds dy_\parallel
\\ &=  - 2  \int_{
\sqrt{| y_\parallel - x_\parallel |^2 + x_3^2 }<t
}  \frac{\rho (t -  \sqrt{| y_\parallel - x_\parallel |^2 + x_3^2 },y_\parallel ) }{ \sqrt{| y_\parallel - x_\parallel |^2 + x_3^2 }}   dy_\parallel.
%\\ = &  \frac{1}{2 \pi}  \int_{ |y-x | < t }  \frac{1}{|y-x | } h (y_\parallel , t - |y-x |  )  dy_\parallel,
\end{split}
\Ee

\subsection{Summary}

Collecting the terms, we conclude the following formula. 
\begin{proposition}
 
 \Be    \begin{split}
 & E_i(t,x)  
 =   \frac{1}{ 4 \pi  t^2} \int_{\p B(x;t) \cap \{ y_3 > 0 \} }  \left( t \p_t  E_{0i}( y) +  E_{0i} (y) + \nabla  E_{0i} (y) \cdot (y-x) \right) dS_y%\label{Eesttat0pos} 
\\   & + \frac{1}{ 4 \pi  t^2} \int_{\p B(x;t) \cap \{ y_3 < 0 \} } \iota_i \big(- t \p_t  E_{0i}( \bar y ) -  E_{0i} (\bar y)   - \nabla  E_{0i} (\bar y)  \cdot (\bar y - \bar x ) \big) dS_y %\label{Eesttat0neg}
 \\  %\label{Eestbulkpos}
   &  +   \int_{B(x;t) \cap \{y_3 > 0\}}  \int_{\R^3}\frac{ (|\hat{v}|^2-1  )(\hat{v}_i +  \o_i ) }{|y-x|^2 (1+  \hat v \cdot \o )^2} f(t-|y-x|,y ,v)\dd v \dd y
 \\ %\label{Eestbulkneg}
  &- \int_{B(x;t) \cap \{y_3 <0\}}   \int_{\R^3} \iota_i \frac{ (|\hat{v}|^2-1 )(\hat{v}_i +  \bar \o_i ) }{|y-x|^2 (1+ \hat v \cdot \bar \o )^2}f(t-|y-x|,\bar y ,v) \dd v \dd y
 \\ &  %\label{EestSpos}
  +  \int_{B(x;t) \cap \{y_3 > 0\}}  \int_{\R^3}    a^E_i(   v,\o ) \cdot (E + \hat v \times B - g\mathbf e_3 )   f (t-|y-x|,y ,v) \dd v \frac{ \dd y}{|y-x|}
 \\ & %\label{EestSneg} 
  -   \int_{B(x;t) \cap \{y_3 <  0\}}  \int_{\R^3} \iota_i   a^E_i ( v ,\bar \o ) \cdot (E + \hat v \times B - g\mathbf e_3 )   f (t-|y-x|, \bar y ,v) \dd v \frac{ \dd y}{|y-x|}
\\ %\label{Eestbdrypos} 
&  + \int_{B(x;t) \cap \{y_3= 0\}} \int_{\R^3} \left( \delta_{i 3 }  -  \frac{(\o_i + \hat{v}_i)\hat{v}_3}{1+ \hat v \cdot \o }  \right) f (t-|y-x|,y_\parallel, 0 ,v) \dd v\frac{ \dd y_\parallel}{|y-x|}
\\  %\label{Eestbdryneg} 
& -  \int_{B(x;t) \cap \{y_3 =0\}} \int_{\R^3} \iota_i \left( \delta_{i 3 } -   \frac{ \bar \o_i \hat{v}_3 + \hat{v}_i  \hat{v}_3}{1+ \hat v \cdot \bar \o} \right) f(t-|y-x|,y_\parallel, 0 ,v) \dd v \frac{ \dd y_\parallel }{|y-x|}
\\ %\label{Eestinitialpos} 
&-  \int_{\p B(x;t) \cap \{ y_3 > 0 \} }  \int_{\mathbb R^3}   \o_j  \left(\delta_{ij} - \frac{(\o_i + \hat{v}_i)\hat{v}_j}{1+ \hat v \cdot \o }\right) f(0,y,v) \dd v \frac{\dd S_y}{|y-x|}
 \\ %\label{Eestinitialneg}  
 &  + \int_{\p B(x;t) \cap \{ y_3 < 0 \} }  \int_{\R^3}   \iota_i  \bar \o_j   \Big( \delta_{ij} - \frac{ \bar \o_i \hat{v}_j + \hat{v}_i \hat{v}_j}{1+ \hat v \cdot \bar \o}   \Big)  f(0, \bar y ,v)  \dd v \frac{\dd S_y}{|y-x|}
 \\ %\label{Eest3bdrycontri}  
 & - \delta_{i3}   \int_{ B(x;t) \cap \{y_3 =0\}}  \int_{\mathbb R^3 }  \frac{ 2 f (t - |y-x |, y_\parallel, 0, v )}{|y-x | }  dv d  S_y .
\end{split}
\Ee

 \end{proposition}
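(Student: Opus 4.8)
The plan is to simply collect and reorganize the component-wise formulas already derived in the previous subsections, verifying that the $i=1,2$ (tangential) case and the $i=3$ (normal) case combine into the single displayed expression. For the tangential components $i=1,2$, we start from the decomposition of $E_i(t,x)$ into the initial-data surface integrals plus \eqref{Eexpanmajor1} and \eqref{Eexpanmajor2}. We then substitute the expressions for \eqref{Eexpanmajor1} obtained from \eqref{upperS1} and \eqref{upperT1}, and for \eqref{Eexpanmajor2} obtained from \eqref{lowerS1} and \eqref{GSlowerhalf2}. Since $\iota_i = +1$ for $i=1,2$, every occurrence of $\iota_i$ in the claimed formula is trivially $1$ in this case, so one only needs to match the "upper" pieces against \eqref{upperS1}--\eqref{upperT1} and the "lower" pieces against \eqref{lowerS1}--\eqref{GSlowerhalf2} term by term: the $a^E_i$ volume terms, the $|y-x|^{-2}$-singular volume terms, the $\partial B$ initial-data-of-$f$ surface terms, and the $\{y_3=0\}$ boundary surface terms. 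The $\delta_{i3}$ prefactor on the very last term vanishes for $i=1,2$, consistent with the fact that no such term appeared in Section \ref{sec:Et}.

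For the normal component $i=3$, we use the decomposition $E_3 = \tilde E_3 + w$, where $\tilde E_3$ is the even-extension solution whose pieces \eqref{tildeE3formula1} and \eqref{tildeE3formula2} were expanded in \eqref{E3expression1} and \eqref{E3expression2}, and $w$ solves the Neumann problem \eqref{waveE3b} with the explicit representation \eqref{wexpression2}. Here $\iota_3 = -1$, so the "lower" contributions coming from \eqref{E3expression2} carry an overall sign that must be reconciled with the $\iota_i$-factored form of the claim; this is exactly the bookkeeping encoded by \eqref{iota} and \eqref{DyT}, which is why those were recorded earlier. The remaining new ingredient is that the Neumann part $w$ from \eqref{wexpression2}, namely $-2\int_{\sqrt{|y_\parallel-x_\parallel|^2+x_3^2}<t}\frac{\rho(t-\sqrt{|y_\parallel-x_\parallel|^2+x_3^2},y_\parallel)}{\sqrt{|y_\parallel-x_\parallel|^2+x_3^2}}\,dy_\parallel$, must be rewritten using \eqref{charge} (with the single-species normalization $e_\beta=1$) as $-2\int_{\{y_3=0\}\cap B(x;t)}\int_{\R^3}\frac{f(t-|y-x|,y_\parallel,0,v)}{|y-x|}\,dv\,dS_y$, since on $\{y_3=0\}$ one has $|y-x|=\sqrt{|y_\parallel-x_\parallel|^2+x_3^2}$; this produces precisely the last term $-\delta_{i3}\int_{B(x;t)\cap\{y_3=0\}}\int_{\R^3}\frac{2 f(t-|y-x|,y_\parallel,0,v)}{|y-x|}\,dv\,dS_y$ in the proposition.

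The only genuinely delicate point is sign and index bookkeeping when merging the $i=1,2$ and $i=3$ cases into one formula carrying the symbols $\iota_i$, $\o_i$ versus $\bar\o_i$, and $\delta_{i3}$: one must check that the lower-half-space surface and volume terms from Section \ref{sec:Et} (which have no $\iota$) and those from \eqref{E3expression2} (which effectively carry $\iota_3=-1$) are both captured by the single $\iota_i$-weighted line, and similarly that the $\bar\o$-dependence in $a^E_i(v,\bar\o)$ and in the kernels $\frac{(|\hat v|^2-1)(\hat v_i+\bar\o_i)}{(1+\hat v\cdot\bar\o)^2}$ matches \eqref{lowerS1}, \eqref{GSlowerhalf2}, \eqref{E3expression2}. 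I expect no analytic obstacle beyond this, since every term has already been computed; the proof is essentially an assembly, and the main risk is a clerical sign error in the $\{y_3<0\}$ and $\{y_3=0\}$ contributions. I would therefore carry out the two cases side by side, writing the $i=1,2$ identity first, then the $i=3$ identity, and finally observing that both are special cases of the displayed formula.
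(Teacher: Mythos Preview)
Your proposal is correct and follows exactly the paper's approach: the paper's proof consists of the single sentence ``Collecting the terms, we conclude the following formula,'' and your plan lays out precisely which previously derived pieces (\eqref{upperS1}, \eqref{upperT1}, \eqref{lowerS1}, \eqref{GSlowerhalf2} for $i=1,2$; \eqref{E3expression1}, \eqref{E3expression2}, and \eqref{wexpression2} for $i=3$) are to be assembled and how the $\iota_i$ convention unifies them. Your identification of the $w$-term via \eqref{charge} as the source of the final $\delta_{i3}$ line is also correct.
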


\subsection{Representation of the Magnetic field in a half space}

Next, we solve for $B$. For $B_1, B_2$ we have, for $i =1,2,$
\Be \label{wavetildeB}
 \begin{split}
\p_t^2  B_i - \Delta_x  B_i = & 4 \pi ( \nabla_x \times J)_i:= H_i  \ \  \text{ in }  \ \ \O,  
\\ \p_{x_3} B_1 = &  4 \pi J_2  , \ \p_{x_3} B_2 =  4 \pi J_1 \ \  \text{ on } \ \  \p \O,
\\  B_i(0,x) = & B_{0 i}, \p_t  B_i(0,x) = \p_t B_{0 i} \ \  \text{ in }  \ \ \O.
\end{split} \Ee
%And $B_b$ satisfies
%\Be \begin{split}
%\p_t^2  B_b - \Delta_x  B_b = & 0 \text{ in } \O,
%\\  B_b = & \begin{bmatrix}  -K_2(t,x) \\ -K_1(t,x) \\  0 \end{bmatrix}   \text{ on } \p \O,
%\\  B_b (0,x) = & 0, \p_t  B_b(0,x) =0.
%\end{split} \Ee

To solve \ref{wavetildeB} we write $B_i = \tilde B_i + B_{bi} $ with $\tilde B_i$ satisfies the wave equation in $(0, \infty) \times \mathbb R^3 $ with even extension in $x_3$:
\Be \label{wavetildeBi}
\begin{split}
\p_t^2 \tilde B_i - \Delta_x \tilde B_i = &  \mathbf 1_{x_3 > 0 }  H_i(t,x) +  \mathbf 1_{x_3 <  0 }  H_i(t,\bar x),
\\  \tilde B_i(0,x)  = &  \mathbf 1_{x_3 > 0 }   B_{0i} (x) +  \mathbf 1_{x_3 <  0 }   B_{0i} (\bar x),
\\  \p_t  \tilde B_i (0 , x) = &  \mathbf 1_{x_3 > 0 } \p_t  B_{0i} (x) +  \mathbf 1_{x_3 <  0 } \p_t  B_{0i}(\bar x).
\end{split}
\Ee
And $B_{bi}$ satisfies
\Be \label{Bibeq}
\begin{split}
\p_t^2 B_{bi} - \Delta_x B_{bi} = 0 & \text{ in } \O,
\\ B_{bi} (0,x) = 0, \p_t B_{bi} = 0 & \text{ in } \O,
\\ \p_{x_3} B_{b1} = 4 \pi J_2, \ \p_{x_3} B_{b2} = - 4 \pi J_1 & \text{ on } \O.
\end{split}
\Ee
Then from \ref{wavetildeBi},
\[ \label{tildeE3formula}
\begin{split}
& \tilde B_i (t,x )
\\ =  &  \frac{1}{4 \pi t^2} \int_{\p B(x;t) \cap \{ y_3 > 0 \} }  \left( t \p_t  B_{0i} (y ) +   B_{0i}(y) + \nabla   B_{0i} (y) \cdot (y-x) \right) dS_y
\\  & + \frac{1}{4 \pi t^2} \int_{\p B(x;t) \cap \{ y_3 < 0 \} }  \big(  t \p_t   B_{0i}(\bar y ) +   B_{0i}(\bar y)   + \nabla   B_{0i} (\bar y) \cdot (\bar y - \bar x)   \big) dS_y
\\ & + \frac{1}{4 \pi } \int_{B(x;t)  \cap \{ y_3 > 0 \} }\frac{ H_i ( t - |y-x|, y ) }{|y-x| } dy + \frac{1}{4 \pi } \int_{B(x;t)  \cap \{ y_3 < 0 \} }\frac{  H_i( t - |y-x|, \bar y ) }{|y-x| } dy.
\end{split}
\]
Applying \ref{wexpression2} to \ref{Bibeq},
\Be
\begin{split}
B_{bi}(t,x) = (-1)^i 2  \int_{ B(x;t) \cap \{y_3 =0\}}  \int_{\mathbb R^3 }  \frac{ \hat v_{\underline{i} }  f (t - |y-x |, y_\parallel, 0, v )}{|y-x | }  dv d  S_y,
\end{split}
\Ee
where we define
\[
\underline i = \begin{cases} 2, \text{ if } i=1, \\ 1, \text{ if } i=2. \end{cases}
\]
Thus,
\Be \label{Bi12rep}
\begin{split}
&B_i(t,x) \\
= &  \frac{1}{4 \pi t^2} \int_{\p B(x;t) \cap \{ y_3 > 0 \} }  \left( t \p_t  B_{0i} ( y ) +   B_{0i}(y) + \nabla   B_{0i} (y) \cdot (y-x) \right) dS_y
\\  & + \frac{1}{4 \pi t^2} \int_{\p B(x;t) \cap \{ y_3 < 0 \} }  \big(  t \p_t   B_{0i}(\bar y ) +   B_{0i}(\bar y)  + \nabla   B_{0i} (\bar y) \cdot (\bar y - \bar x)  \big) dS_y
\\ & + \frac{1}{4 \pi } \int_{B(x;t)  \cap \{ y_3 > 0 \} }\frac{ H_i ( t - |y-x|, y ) }{|y-x| } dy + \frac{1}{4 \pi } \int_{B(x;t)  \cap \{ y_3 < 0 \} }\frac{  H_i( t - |y-x|, \bar y ) }{|y-x| } dy
\\ & + (-1)^i 2  \int_{ B(x;t) \cap \{y_3 =0\}}  \int_{\mathbb R^3 }  \frac{ \hat v_{\underline i }  f (t - |y-x |, y_\parallel, 0, v )}{|y-x | }  dv d  S_y.
\end{split}
\Ee

On the other hand, $B_3(t,x)$ satisfies
\[
\begin{split}
\p_t^2 B_3 - \Delta_x B_3 = & 4 \pi (\nabla_x \times j )_3 := H_3 \text{ in }  \O,
\\ B_3(0,x) = & B_{03}, \p_t B_3(0,x) = \p_t B_{03} \text{ in } \O,
\\ B_3 = & 0 \text{ on }  \p \O.
\end{split}
\]
Using the odd extension in $x_3$:
\[
\begin{split}
H_3(t,x)  = &  \mathbf 1_{x_3 > 0 }  H_3(t,x) -  \mathbf 1_{x_3 <  0 }  H_3(t,\bar x),
\\   B_{03} (x)  = &  \mathbf 1_{x_3 > 0 }   B_{03} (x) -  \mathbf 1_{x_3 <  0 }   B_{03} (\bar x),
\\  \p_t   B_{03} (0 , x) = &  \mathbf 1_{x_3 > 0 } \p_t  B_{03} (x) -  \mathbf 1_{x_3 <  0 } \p_t  B_{03}(\bar x),
\end{split}
\]
we get the expression for $B_3$:
\Be \label{B3rep}
\begin{split}
&B_3(t,x) \\
= &  \frac{1}{4 \pi t^2} \int_{\p B(x;t) \cap \{ y_3 > 0 \} }  \left( t \p_t  B_{03} (y ) +   B_{30}(y) + \nabla   B_{03}(y) \cdot (y-x) \right) dS_y
\\  & - \frac{1}{4 \pi t^2} \int_{\p B(x;t) \cap \{ y_3 < 0 \} }  \big(  t \p_t   B_{03}( \bar y ) +   B_{03} (\bar y)  + \nabla   B_{03} (\bar y) \cdot (\bar y - \bar x )  \big) dS_y
\\ & + \frac{1}{4 \pi } \int_{B(x;t)  \cap \{ y_3 > 0 \} }\frac{ H_3 ( t - |y-x|, y ) }{|y-x| } dy - \frac{1}{4 \pi } \int_{B(x;t)  \cap \{ y_3 < 0 \} }\frac{  H_3( t - |y-x|, \bar y ) }{|y-x| } dy.
\end{split}
\Ee
Combining \ref{Bi12rep} and \ref{B3rep}, we get for $i=1,2,3$,
\begin{align}
%\Be \label{Birep}
%\begin{split}
  B_i(t,x) =
 \notag &  \frac{1}{4 \pi t^2} \int_{\p B(x;t) \cap \{ y_3 > 0 \} }  \left( t \p_t  B_{0i} ( y ) +   B_{0i}(y) + \nabla   B_{0i} (y) \cdot (y-x) \right) dS_y
\\   \notag & + \frac{\iota_i}{4 \pi t^2} \int_{\p B(x;t) \cap \{ y_3 < 0 \} }  \big(  t \p_t   B_{0i}( \bar y ) +   B_{0i}(\bar y)   + \nabla  B_{0i} (\bar y) \cdot (\bar y - \bar x )   \big) dS_y
\\ \label{Bexpanmajor1} & + \frac{1}{4 \pi } \int_{B(x;t)  \cap \{ y_3 > 0 \} }\frac{ H_i ( t - |y-x|, y ) }{|y-x| } dy 
\\  \label{Bexpanmajor2} & + \frac{\iota_i}{4 \pi } \int_{B(x;t)  \cap \{ y_3 < 0 \} }\frac{  H_i( t - |y-x|, \bar y ) }{|y-x| } dy
\\ \notag & + (-1)^i  2( 1- \delta_{i3} )  \int_{ B(x;t) \cap \{y_3 =0\}}  \int_{\mathbb R^3 }  \frac{ \hat v_{\underline i }  f (t - |y-x |, y_\parallel, 0, v )}{|y-x | }  dv d  S_y.
%\end{split}
%\Ee
\end{align}

%Then the solution of $\tilde B(t,x)$ in $\mathbb R^3$ is
%\Be \label{tildeBformula}
%\begin{split}
%\tilde B(t,x ) =  &  \frac{1}{4 \pi t^2} \int_{\p B(x;t) \cap \{ y_3 > 0 \} }  \left( t \p_t  B_0( y ) +  B_0 (y) + \nabla  B_0 (y) \cdot (y-x) \right) dS_y
%\\  & + \frac{1}{4 \pi t^2} \int_{\p B(x;t) \cap \{ y_3 < 0 \} }  \left( - t \p_t  B_0( \bar y ) -  B_0 (\bar y) + \nabla  B_0 (\bar y) \cdot (y-x) \right) dS_y
%\\ & + \frac{1}{4 \pi } \int_{B(x;t)  \cap \{ y_3 > 0 \} }\frac{ H( t - |y-x|, y ) }{|y-x| } dy + \frac{1}{4 \pi } \int_{B(x;t)  \cap \{ y_3 < 0 \} }\frac{ -  H( t - |y-x|, \bar y ) }{|y-x| } dy.
%\end{split}
%\Ee

Using \ref{pxptST}, we have
\begin{align}
%\Be \label{BSTupper}
%\begin{split}
\notag \ref{Bexpanmajor1}  =  &    \int_{B(x;t)   \cap \{ y_3 > 0 \} } \int_{\mathbb R^3} \frac{(  \nabla_x f \times \hat v )_i ( t - |y-x|, y, v ) }{|y-x| } dy
\\  \label{BupperS} = &   \int_{B(x;t)  \cap \{ y_3 > 0 \} } \int_{\mathbb R^3}   \frac{ (\o \times \hat v)_i  }{ 1+ \hat{v} \cdot \o}  S  f ( t - |y-x|, y ,v  ) dv \frac{ dy} {|y-x|}
\\ \label{BupperT} & + \int_{B(x;t)  \cap \{ y_3 > 0 \} } \int_{\mathbb R^3}   \left(  (T \times \hat v)_i  - \frac{ (\o \times \hat v )_i \hat v \cdot T }{1+ \hat{v} \cdot  \o}\right)    f ( t - |y-x|, y, v ) dv \frac{ dy} {|y-x|}.
% \end{split} 
%\Ee
\end{align}

For \ref{BupperS}, we replace $Sf$ with \ref{S=Lf_v} and apply the integration by parts in $v$ to derive that \ref{BupperS} equals
\Be \label{BupperS1}
\begin{split}
\int_{B(x;t) \cap \{y_3 > 0\}}  \int_{\R^3}   a^B_i ( v ,\o ) \cdot (E + \hat v \times B - g\mathbf e_3 )  
 f (t-|y-x|,y ,v) \dd v \frac{ \dd y}{|y-x|},
\end{split}
\Ee
where
\Be \label{aB}
 a^B_i(   v,\o ) := \nabla_v \left( \frac{( \o \times  \hat v )_i }{ 1 + \hat v \cdot \o } \right)  =   \frac{ \nabla_v [ (\o \times v)_i ] }{\sqrt{1+|v|^2 } (1 + \hat  v \cdot \o  ) }  + \frac{ (\o \times v)_i ( \hat v  + \o ) }{ ( \sqrt{1+|v|^2 } (1  + \hat v \cdot \o ) )^2 }.
\Ee

For \ref{BupperT}, we replace $T_j f$ with \ref{T=y} and apply the integration by parts to get \ref{BupperT} equals 
\Be \label{BupperT1}
\begin{split}
 & \int_{  \p B(x; t)  \cap \{ y_3 > 0 \} } \int_{\mathbb R^3}   ( \o \times \hat v)_i  \left( 1   - \frac{  \hat v \cdot \o  }{1+ \hat{v} \cdot  \o}\right)     f ( 0, y, v ) dv \frac{ dS_y} {t}
\\ & +   \int_{ B(x;t)  \cap \{ y_3 = 0 \} } \int_{\mathbb R^3}      \left( - (e_3 \times \hat v)_i   + \frac{ ( \o \times \hat v  )_i }{1+ \hat{v} \cdot  \o}  ( \hat v \cdot e_3 )  \right)     f ( t- |y-x|, y_\parallel, 0, v ) dv \frac{ dy_\parallel} {|y-x|}
\\ & + \int_{B(x;t)  \cap \{ y_3 > 0 \} }  \int_{\mathbb R^3}    \frac{   ( \o \times \hat v  )_i \left( 1   -  |\hat v |^2  \right) }{( 1 + \hat v \cdot \o )^2|y-x |^2}  f   ( t - |y-x|, y, v )  dv dy .
\end{split}
\Ee
where we have used that, from \cite{GS,Glassey},
\Be \notag
\begin{split}
&    \p_{y_j }  \left( \frac{ ( \o \times \hat v  ) \hat v_j }{ ( 1 + \hat v \cdot \o )|y-x | } \right) 
 \\ = &  \frac{   ( \o \times \hat v  ) \left( - ( \o \cdot \hat v )( 1 + \o \cdot \hat v ) -  ( \o \cdot \hat v ) ( 1 + \o \cdot \hat v )  -  |\hat v |^2  +  ( \o \cdot \hat v )^2 \right) }{( 1 + \hat v \cdot \o )^2|y-x |^2}
 \\ = &    \frac{   ( \o \times \hat v  ) \left( - 2 ( \o \cdot \hat v )   -  |\hat v |^2  -   ( \o \cdot \hat v )^2 \right) }{( 1 + \hat v \cdot \o )^2|y-x |^2},
 \end{split}
\Ee
and
\Be \notag
\begin{split}
& - \nabla_{y } (   \frac{1}{ |y-x|  }  ) \times \hat v +    \p_{y_j }  \left( \frac{ ( \o \times \hat v  ) \hat v_j }{ ( 1 + \hat v \cdot \o )|y-x | } \right)   
\\ & =  \frac{   ( \o \times \hat v  ) \left( ( 1 + \hat v \cdot \o )^2 - 2 ( \o \cdot \hat v )   -  |\hat v |^2  -   ( \o \cdot \hat v )^2 \right) }{( 1 + \hat v \cdot \o )^2|y-x |^2}
 =   \frac{   ( \o \times \hat v  ) \left( 1   -  |\hat v |^2  \right) }{( 1 + \hat v \cdot \o )^2|y-x |^2}.
\end{split}
\Ee
Now we consider \ref{Bexpanmajor2}. From \ref{ST_lower},  
\begin{align}
\notag \ref{Bexpanmajor2}
& =  \iota_i \int_{B(x;t)   \cap \{ y_3 < 0 \} } \int_{\mathbb R^3} \frac{(  \nabla_x f \times \hat v )_i ( t - |y-x|, \bar y, v ) }{|y-x| } dy
\\ \label{BlowerS} & =   \iota_i  \int_{B(x;t)  \cap \{ y_3 < 0 \} } \int_{\mathbb R^3}   \frac{ (\bar \o \times \hat v  )_i }{ 1+ \hat{v} \cdot \bar \o }  S  f ( t - |y-x|, \bar y ,v  ) dv \frac{ dy} {|y-x|}
\\ \label{BlowerT} &  + \iota_i \int_{B(x;t)  \cap \{ y_3 < 0 \} } \int_{\mathbb R^3}   \left(  ( \bar T \times \hat v )_i -  \frac{ ( \hat v \cdot  \bar T )  ( \bar \o \times \hat v )_i }{1+ \hat{v} \cdot  \bar \o}\right)    f ( t - |y-x|, \bar y, v ) dv \frac{ dy} {|y-x|}.
\end{align} 
As getting \ref{BupperS1}, we derive that, with $a_i^B$ of \ref{aB}, \ref{BlowerS} equals 
\Be\label{lowerS1}
\iota_i  \int_{B(x;t) \cap \{y_3 < 0\}}  \int_{\R^3}   a^B_i ( v ,\bar{\o}  ) \cdot (E + \hat v \times B - g\mathbf e_3 )  
 f (t-|y-x|,\bar y ,v) \dd v \frac{ \dd y}{|y-x|}.
\Ee

 For \ref{BlowerT}, applying \ref{def:T-} and the integration by parts, we derive that \ref{BlowerT} equals 
\Be \label{HSlowerhalf2}
\begin{split}
 &   \iota_i  \int_{ \p B(x; t) \cap \{ y_3 < 0 \} } \int_{\mathbb R^3}    (\bar \o \times \hat v)_i  \left( 1   - \frac{  \hat v \cdot \bar \o  }{1+ \hat{v} \cdot  \bar \o }\right)     f ( 0, \bar y, v ) dv \frac{ dS_y} {t}
\\  + &  \iota_i \int_{ B(x;t)  \cap \{ y_3 = 0 \} } \int_{\mathbb R^3}      \left( -(e_3 \times \hat v )_i  + \frac{  ( \bar \o \times \hat v )_i  }{1+ \hat{v} \cdot  \bar \o}  ( \hat v \cdot e_3 )  \right)   \frac{  f ( t- |y-x|, y_\parallel, 0, v ) }{|y-x|} dv d y_\parallel
\\ + & \iota_i \int_{B(x;t)  \cap \{ y_3 < 0 \} }  \int_{\mathbb R^3}  \left(    \frac{   ( \bar \o \times \hat v  )_i \left( 1   -  |\hat v |^2  \right) }{( 1 + \hat v \cdot \bar \o )^2|y-x |^2}  \right) f   ( t - |y-x|, \bar y, v )  dv dy, 
\end{split}
\Ee
where we have used the direct computation
\Be \notag
\begin{split}
  \iota_j  \p_{y_j }  \left( \frac{ ( \bar \o \times \hat v  ) \hat v_j }{ ( 1 + \hat v \cdot \bar \o )|y-x | } \right) = &  \frac{   ( \bar \o \times \hat v  ) \left( - ( \hat v \cdot \bar \o )( 1 + \hat v \cdot \bar \o ) -   \hat v \cdot \bar \o - | \hat v |^2 \right) }{( 1 + \hat v \cdot \bar \o )^2|y-x |^2}
 \\ = &    \frac{   ( \bar \o \times \hat v  ) \left( - 2 ( \hat v \cdot \bar \o )   -  |\hat v |^2  -   ( \hat v \cdot \bar \o )^2 \right) }{( 1 +\hat v \cdot \bar \o )^2|y-x |^2},
 \end{split}
\Ee
and
\Be  \notag
\begin{split}
&-   \overline{ \nabla_y (|y-x|^{-1} ) }  \times \hat v  +   \iota_j  \p_{y_j }  \left( \frac{ ( \bar \o \times \hat v  ) \hat v_j }{ ( 1 + \hat v \cdot \bar \o )|y-x | } \right)
\\ & =  \frac{   ( \bar \o \times \hat v  ) \left( ( 1 + \hat v \cdot \bar \o )^2 - 2 ( \hat v \cdot \bar \o )   -  |\hat v |^2  -   ( \hat v \cdot \bar \o )^2 \right) }{( 1 + \hat v \cdot \bar \o )^2|y-x |^2}
 =   \frac{   ( \bar \o \times \hat v  ) \left( 1   -  |\hat v |^2  \right) }{( 1 + \hat v \cdot \bar \o )^2|y-x |^2}.
\end{split}
\Ee

Collecting the terms, we conclude the following formula:
\begin{proposition}
%\begin{align}
\Be
\begin{split}
%\notag 
&  B_i(t,x )  %\label{Besttat0pos}  
=    \frac{1}{4 \pi t^2} \int_{\p B(x;t) \cap \{ y_3 > 0 \} }  \left( t \p_t  B_{0i}( y ) +  B_{0i} (y) + \nabla  B_{0i} (y) \cdot (y-x) \right) dS_y
\\  %\label{Besttat0neg}  
& + \frac{\iota_i}{4 \pi t^2} \int_{\p B(x;t) \cap \{ y_3 < 0 \} }  \big(  t \p_t  B_{0i}( \bar y ) +  B_{0i} (\bar y) + \nabla   B_{0i} (\bar y) \cdot (\bar y - \bar x)  \big) dS_y
\\ %\label{Bestbulkpos}  
& + \int_{B(x;t)  \cap \{ y_3 > 0 \} }  \int_{\mathbb R^3}   \frac{   ( \o \times \hat v  )_i \left( 1   -  |\hat v |^2  \right) }{( 1 + \hat v \cdot \o )^2|y-x |^2} f   ( t - |y-x|, y, v )  dv dy
\\ %\label{Bestbulkneg}  
& +  \int_{B(x;t)  \cap \{ y_3 < 0 \} }   \int_{\mathbb R^3}  \iota_i \frac{   ( \bar \o \times \hat v  )_i \left( 1   -  |\hat v |^2  \right) }{( 1 + \hat v \cdot \bar \o )^2|y-x |^2} f   ( t - |y-x|, \bar y, v )  dv dy
\\ %\label{BestSpos} 
& +    \int_{B(x;t) \cap \{y_3 > 0\}}  \int_{\R^3}   a^B_i ( v ,\o ) \cdot (E + \hat v \times B - g\mathbf e_3 )   f (t-|y-x|,y ,v) \dd v \frac{ \dd y}{|y-x|}
\\ %\label{BestSneg} 
& +   \int_{B(x;t) \cap \{y_3 < 0\}}  \int_{\R^3}  \iota_i a^B_i ( v ,\bar \o ) \cdot (E + \hat v \times B - g\mathbf e_3 )   f (t-|y-x|,\bar y ,v) \dd v \frac{ \dd y}{|y-x|}
\\  %\label{Bestbdrypos} 
& +   \int_{ B(x;t)  \cap \{ y_3 = 0 \} } \int_{\mathbb R^3}      \left( -(e_3 \times \hat v)_i   + \frac{ ( \o \times \hat v)_i   \hat v_3 }{1+ \hat{v} \cdot  \o}   \right)     f ( t- |y-x|, y_\parallel, 0, v ) dv \frac{ dy_\parallel} {|y-x|}
\\ %\label{Bestbdryneg} 
& +  \int_{ B(x;t)  \cap \{ y_3 = 0 \} } \int_{\mathbb R^3}  \iota_i    \left( - (e_3 \times \hat v )_i  + \frac{ ( \bar \o \times \hat v)_i  \hat v_3   }{1+ \hat{v} \cdot  \bar \o}    \right)     f ( t- |y-x|, y_\parallel, 0, v ) dv \frac{ dy_\parallel} {|y-x|}
\\ %\label{Bestinitialpos} 
& + \int_{\p B(x;t) \cap \{ y_3 > 0 \} } \int_{\mathbb R^3}   \left(  \frac{  (  \o \times \hat v)_i   }{1+ \hat{v} \cdot  \o}\right)     f ( 0, y, v ) dv \frac{ dS_y} {t}
\\ %\label{Bestinitialneg} 
& +  \int_{\p B(x;t) \cap \{ y_3 < 0 \} } \int_{\mathbb R^3} \iota_i  \left(  \frac{    ( \bar \o \times \hat v )_i   }{1+ \hat{v} \cdot  \bar \o }\right)     f ( 0, \bar y, v ) dv \frac{ dS_y} {t}
\\ % \label{Bestbdrycontri} 
& + (-1)^i  2 (1 - \delta_{i3} )  \int_{ B(x;t) \cap \{y_3 =0\}}  \int_{\mathbb R^3 }  \frac{ \hat v_{\underline i }  f (t - |y-x |, y_\parallel, 0, v )}{|y-x | }  dv d  S_y.
%\end{align}
\end{split}
\Ee
\end{proposition}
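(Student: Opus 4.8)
The plan is to assemble the stated formula for $B_i$ by collecting the pieces already derived, starting from the decomposition $B_i = \tilde B_i + B_{bi}$ for the tangential components $i=1,2$ (with $\tilde B_i$ solving \ref{wavetildeBi} by even extension in $x_3$ and $B_{bi}$ solving \ref{Bibeq}), and the pure odd extension for $B_3$. Feeding the Kirchhoff solution of the extended wave equation into these, and splitting every retarded volume integral $\int \frac{H_i(t-|y-x|,y)}{|y-x|}\,\dd y$ into the contribution from $\{y_3>0\}$ and the reflected one from $\{y_3<0\}$ (where $f$ is evaluated at $\bar y$), one arrives at \ref{Bexpanmajor1}--\ref{Bexpanmajor2}, in which the factor $(-1)^i 2(1-\delta_{i3})$ already records that the Neumann-correction surface term is present only for $i=1,2$, with the sign dictated by $\p_{x_3}B_{b1}=4\pi J_2$, $\p_{x_3}B_{b2}=-4\pi J_1$ in \ref{Bibeq}. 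The two initial-data surface integrals over $\p B(x;t)$ intersected with the upper and lower half-spaces are already in final form, the sign $\iota_i$ on the lower-half piece coming from the parity of the extension of $B_{0i}$.

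Next I would run the Glassey--Strauss reduction on the forcing terms. Writing $H_i = 4\pi(\nabla_x\times J)_i$ and $J=\int\hat v f\,\dd v$ from \ref{current}, the integrand becomes $\int(\nabla_x f\times\hat v)_i\,\dd v$. On $\{y_3>0\}$ I substitute the operator identity \ref{pxptST}, producing the $S$-term \ref{BupperS} and the $T_jf$-term \ref{BupperT}; on $\{y_3<0\}$, where $f$ is evaluated at $\bar y$, I use instead the reflected identity \ref{ST_lower} (with $\bar{\o}$ and $\bar T$), giving \ref{BlowerS}--\ref{BlowerT}. For the $S$-terms I replace $Sf$ by $-\nabla_v\cdot[(E+\hat v\times B - g\mathbf e_3)f]$ using \ref{S=Lf_v} and integrate by parts in $v$; since no boundary term in $v$ arises, this yields exactly \ref{BupperS1} with $a^B_i(v,\o)$ and its lower-half analogue with $a^B_i(v,\bar{\o})$ and prefactor $\iota_i$.

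For the $T_jf$ and $\bar T_j f$ terms I would use $T_jf = \p_{y_j}[f(t-|y-x|,y,v)]$ and the analogue \ref{def:T-} (which carries $\bar{\o}_3=-\o_3$ and the sign flip in the third slot), and integrate by parts in $y$ over each half-ball. Each such integration produces three contributions: the $\p B(x;t)$-surface term where $f$ is at $t=0$; the $\{y_3=0\}$-surface term from the flat face of the half-ball, carrying $-(e_3\times\hat v)_i$ together with the $\hat v_3$-correction; and the volume term from differentiating the kernel $\frac{(\o\times\hat v)_i\hat v_j}{(1+\hat v\cdot\o)|y-x|}$, which by the direct computations displayed after \ref{BupperT1} and \ref{HSlowerhalf2} collapses to $\frac{(\o\times\hat v)_i(1-|\hat v|^2)}{(1+\hat v\cdot\o)^2|y-x|^2}$ (and its $\bar{\o}$-analogue). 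These are precisely \ref{BupperT1} and \ref{HSlowerhalf2}. Finally, applying the Laplace-transform representation \ref{wexpression2} with $4\pi J_2$ (for $i=1$), resp. $-4\pi J_1$ (for $i=2$), in place of $4\pi\rho$, and $J_{\underline i}=\int\hat v_{\underline i}f\,\dd v$, produces the Neumann-correction surface integral over $B(x;t)\cap\{y_3=0\}$ with prefactor $(-1)^i 2(1-\delta_{i3})$. Summing the initial-data integrals, the $a^B_i$ field-dependent terms, the kernel-derivative volume terms, the two $\{y_3=0\}$ boundary terms, the two $t=0$ surface terms, and the Neumann correction gives the asserted identity for all $i=1,2,3$.

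The step I expect to be the main obstacle is the sign bookkeeping: one must verify that the reflection $y\mapsto\bar y$, the parity of the extension ($+$ for $i=1,2$ via the even extension, $-$ for $i=3$ via the odd extension), and the sign flips built into $\bar T_3$ versus $\bar T_i$ in \ref{def:T-} all combine consistently into the single factor $\iota_i$ multiplying every lower-half term, and that no $\{y_3=0\}$ surface contribution survives for $i=3$ beyond what is written. A secondary technical point is checking the two kernel-differentiation identities used to collapse the $T_jf$ volume terms; these are lengthy but routine applications of $\p_{y_j}|y-x|=\o_j$ and $\p_{y_j}\o_k=(\delta_{jk}-\o_j\o_k)/|y-x|$, carried out in the whole-space case in \cite{GS,Glassey} and requiring only the modification $\o\to\bar{\o}$, $\p_{y_j}\to\iota_j\p_{y_j}$ on the lower half (cf.\ \ref{DyT}).
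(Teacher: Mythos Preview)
Your proposal is correct and follows essentially the same route as the paper: decompose $B_i=\tilde B_i+B_{bi}$ for $i=1,2$ (even extension plus Neumann corrector) and use the odd extension for $B_3$, apply the Kirchhoff formula split over the two half-balls, rewrite $(\nabla_x f\times\hat v)_i$ via the $S$/$T$ (resp.\ $S$/$\bar T$) decomposition, integrate by parts in $v$ and in $y$, and read off the Neumann piece from \ref{wexpression2}. Your identification of the sign bookkeeping around $\iota_i$ and $\bar T_3$ as the only delicate point is accurate, and the kernel identities you cite are exactly the ones the paper computes.
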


 \hide
\subsection{Derivatives of the Electronic field in a half space}
We calculate the derivatives of our representation of $E$. Let's first calculate $\p_t E$ by taking $\p_t$ derivative to each term in \ref{Eesttat0pos}-\ref{Eest3bdrycontri}.

Using change of variables $z = y- x$, and using polar coordinates we have
\[
\begin{split}
\ref{Eesttat0pos}  = &  \frac{1}{ 4 \pi t^2} \int_{ \{ |z|  = t \}  \cap \{ z_3 + x_3 > 0 \} }  \left( t \p_t  E_{0i}( x+ z  ) +  E_{0i} (x+z) + \nabla  E_{0i} (x+z) \cdot z \right) dS_z
\\ = & \frac{1}{4\pi}  \int_0^{\arccos( -x_3 /t ) } \int_0^{2 \pi } \left( t \p_t  E_{0i}( x+ z  ) +  E_{0i} (x+z) + \nabla  E_{0i} (x+z) \cdot z \right) \sin \phi d\theta d\phi,
\end{split}
\]
where $z = \begin{bmatrix} t sin \phi \cos \theta \\ t \sin \phi \sin \theta \\ t \cos \phi \end{bmatrix} $. We do the same for \ref{Eesttat0neg} by writing
\[
\begin{split}
\ref{Eesttat0neg}  = & - \frac{\iota_i}{4\pi}  \int_{\arccos( -x_3 /t ) }^0 \int_0^{2 \pi } \left( t \p_t  E_{0i}( \bar x + \bar z  ) +  E_{0i} (\bar x + \bar z) + \nabla  E_{0i} (\bar x + \bar z) \cdot  \bar z \right) \sin \phi d\theta d\phi.
\end{split}
\]
%where $\bar z = \begin{bmatrix} t sin \phi \cos \theta \\ t \sin \phi \sin \theta \\ - t \cos \phi \end{bmatrix} $. 
Thus 
\Be \label{Eesttat0posest1}
\begin{split}
& \frac{ \p }{\p_t} \ref{Eesttat0pos}  + \frac{ \p }{\p_t} \ref{Eesttat0neg} 
\\ = &  \frac{1}{4\pi}  \int_0^{\arccos( -x_3 /t ) } \int_0^{2 \pi } \big(  \p_t  E_{0i}( x+ z  )+ t  \nabla \p_t E_{0i}( x+ z  ) \cdot \frac{z}{t}  + \nabla E_{0,i} (x+z) \cdot \frac{z}{t}
\\ & \quad \quad \quad \quad \quad  \quad \quad \quad \quad \quad +\frac{z}{t}  \cdot \nabla^2  E_{0,i} (x+z) \cdot z + \nabla E_0(x+z) \cdot \frac{z}{t}  \big) \sin \phi d\theta d\phi
\\ & - \frac{1}{4\pi } \int_0^{2 \pi }  \left( t \p_t  E_{0,i}( x_\parallel+ z_\parallel,0 ) +  E_{0,i} (x_\parallel+ z_\parallel,0) + \nabla  E_{0,i} (x_\parallel+ z_\parallel,0) \cdot z \right)  \frac{x_3}{t^2 }  d\theta
\\ & -  \frac{\iota_i}{4\pi}  \int_{\arccos( -x_3 /t ) }^{\pi } \int_0^{2 \pi } \big(  \p_t  E_{0,i}( \bar x + \bar z  )+ t  \nabla \p_t E_{0,i}( \bar x + \bar z  ) \cdot \frac{\bar z}{t}  + \nabla E_{0,i} (\bar x + \bar z) \cdot \frac{\bar z}{t}
\\ & \quad \quad \quad \quad \quad  \quad \quad \quad \quad \quad +\frac{\bar z}{t}  \cdot \nabla^2  E_{0,i} (\bar x + \bar z) \cdot \bar z + \nabla E_0(\bar x + \bar z) \cdot \frac{\bar z}{t}  \big) \sin \phi d\theta d\phi
\\ & - \frac{\iota_i }{4\pi } \int_0^{2 \pi }  \left( t \p_t  E_{0,i}( x_\parallel+  z_\parallel,0 ) +  E_{0,i} (x_\parallel+  z_\parallel,0) + \nabla  E_{0,i} (x_\parallel+  z_\parallel,0) \cdot \bar z \right)  \frac{x_3}{t^2 }  d\theta
.
\end{split}
\Ee

Next, using change of variables $z = y- x$, we have
\Be \label{ptEestbulkpos1}
\begin{split}
\frac{\p}{\p t }  \ref{Eestbulkpos}  = &  \frac{\p}{\p t } \left(  \int_{ \{ |z| < t \}  \cap \{x_3 + z_3 > 0\}}  \int_{\R^3}\frac{ (|\hat{v}|^2-1  )(\hat{v}_i +  \o_i ) }{|z|^2 (1+  \hat v \cdot \o )^2} f(t-|z|,x+z ,v)\dd v \dd z \right) 
\\  = &  \int_{ \{ |z| < t \}  \cap \{x_3 + z_3 > 0\}}  \int_{\R^3}\frac{ (|\hat{v}|^2-1  )(\hat{v}_i +  \o_i ) }{|z|^2 (1+  \hat v \cdot \o )^2} \p_t f(t-|z|,x+z ,v)\dd v \dd z
\\ & + \int_{ \{ |z| = t \}  \cap \{x_3 + z_3 > 0\}}  \int_{\R^3}\frac{ (|\hat{v}|^2-1  )(\hat{v}_i +  \o_i ) }{t^2 (1+  \hat v \cdot \o )^2} f(0,x+z ,v)\dd v \dd S_z.
\end{split}
\Ee

From the equation \ref{VMfrakF}, we write $\p_t f = -  \hat v \cdot \nabla_x f -(E + \hat v \times B - g \mathbf{e}_3 ) \cdot \nabla_v f $. Then from integration by parts in $v$, and that $ \nabla_v \cdot ( \hat v \times B) =0$, we get
\Be \label{ptEestbulkpos2}
\begin{split}
&  \int_{ \{ |z| < t \}  \cap \{x_3 + z_3 > 0\}}  \int_{\R^3}\frac{ (|\hat{v}|^2-1  )(\hat{v}_i +  \o_i ) }{|z|^2 (1+  \hat v \cdot \o )^2} \p_t f(t-|z|,x+z ,v)\dd v \dd z
 \\ = & \int_{ \{ |z| < t \}  \cap \{x_3 + z_3 > 0\}}  \int_{\R^3}\frac{ (|\hat{v}|^2-1  )(\hat{v}_i +  \o_i ) }{|z|^2 (1+  \hat v \cdot \o )^2} \left(  -  \hat v \cdot \nabla_x f -(E + \hat v \times B - g \mathbf{e}_3 ) \cdot \nabla_v f \right) (t-|z|,x+z ,v)\dd v \dd z
 \\ = & \int_{ \{ |z| < t \}  \cap \{x_3 + z_3 > 0\}}   \int_{\R^3} - \frac{ (|\hat{v}|^2-1  )(\hat{v}_i +  \o_i ) }{|z|^2 (1+  \hat v \cdot \o )^2}    \hat v \cdot \nabla_x f(t-|z|,x+z ,v)\dd v \dd z
 \\ & + \int_{ \{ |z| < t \}  \cap \{x_3 + z_3 > 0\}}  \int_{\R^3} \nabla_v  \left( \frac{ (|\hat{v}|^2-1  )(\hat{v}_i +  \o_i ) }{ (1+  \hat v \cdot \o )^2} \right)  \cdot (E + \hat v \times B - g \mathbf{e}_3 ) \frac{ f (t-|z|,x+z ,v)}{|z|^2 } \dd v \dd z.
\end{split}
\Ee
Therefore,
\Be \label{ptEestbulkpos11}
\begin{split}
\frac{\p}{\p t }  \ref{Eestbulkpos}  = &  \int_{ \{ |z| < t \}  \cap \{x_3 + z_3 > 0\}}   \int_{\R^3} - \frac{ (|\hat{v}|^2-1  )(\hat{v}_i +  \o_i ) }{|z|^2 (1+  \hat v \cdot \o )^2}    \hat v \cdot \nabla_x f(t-|z|,x+z ,v)\dd v \dd z
 \\ & + \int_{ \{ |z| < t \}  \cap \{x_3 + z_3 > 0\}}  \int_{\R^3} \nabla_v  \left( \frac{ (|\hat{v}|^2-1  )(\hat{v}_i +  \o_i ) }{ (1+  \hat v \cdot \o )^2} \right)  \cdot (E + \hat v \times B - g \mathbf{e}_3 ) \frac{ f (t-|z|,x+z ,v)}{|z|^2 } \dd v \dd z
\\ & + \int_{ \{ |z| = t \}  \cap \{x_3 + z_3 > 0\}}  \int_{\R^3}\frac{ (|\hat{v}|^2-1  )(\hat{v}_i +  \o_i ) }{t^2 (1+  \hat v \cdot \o )^2} f(0,x+z ,v)\dd v \dd S_z.
\end{split}
\Ee
And by direct computation,
\Be
\nabla_v  \left( \frac{ (|\hat{v}|^2-1  )(\hat{v}_i +  \o_i ) }{ (1+  \hat v \cdot \o )^2} \right) = \frac{1}{(1 + \hat v \cdot \o )^2} \left(\frac{2 \hat v (\hat v_i + \o_i ) - \mathbf e_i + \hat v_i \hat v }{ \langle v \rangle^3 } \right) +  \frac{2}{ (1 + \hat v \cdot \o )^3 } \left(  \frac{ (\o - (\o \cdot \hat v ) \hat v ) (\hat v_i +  \o_i ) }{ \langle v \rangle^3 } \right).
\Ee
%From \ref{vdecaybasic} and \ref{ominusdotexp}, we have
%\[
%| \nabla_v  \left( \frac{ (|\hat{v}|^2-1  )(\hat{v}_i +  \o_i ) }{ (1+  \hat v \cdot \o )^2} \right) | \lesssim  \frac{1}{ ( 1+ \hat v \cdot \o )^2 } \frac{1}{\langle v \rangle^3 }  \lesssim \sqrt{ 1 + |v|^2 }.
%\] 
Doing the same for \ref{Eestbulkneg}, we get
\Be \label{ptEestbulkneg11}
\begin{split}
\frac{\p}{\p t }  \ref{Eestbulkneg}  = & \iota_i \int_{ \{ |z| < t \}  \cap \{x_3 + z_3 < 0\}}   \int_{\R^3}  \frac{ (|\hat{v}|^2-1  )(\hat{v}_i + \bar \o_i ) }{|z|^2 (1+  \hat v \cdot \bar \o )^2}    \hat v \cdot \nabla_x f(t-|z|,x+z ,v)\dd v \dd z
 \\ & - \iota_i \int_{ \{ |z| < t \}  \cap \{x_3 + z_3 < 0\}}  \int_{\R^3} \nabla_v  \left( \frac{ (|\hat{v}|^2-1  )(\hat{v}_i + \bar \o_i ) }{ (1+  \hat v \cdot \bar \o )^2} \right)  \cdot (E + \hat v \times B - g \mathbf{e}_3 ) \frac{ f (t-|z|,x+z ,v)}{|z|^2 } \dd v \dd z
\\ & - \iota_i \int_{ \{ |z| = t \}  \cap \{x_3 + z_3 < 0\}}  \int_{\R^3}\frac{ (|\hat{v}|^2-1  )(\hat{v}_i + \bar \o_i ) }{t^2 (1+  \hat v \cdot \bar \o )^2} f(0,x+z ,v)\dd v \dd S_z.
\end{split}
\Ee

Next, we have from change of variables $ z = y -x$,
\Be \label{ptEestSpos1}
\begin{split}
\frac{ \p}{\p t } \ref{EestSpos} = &  \frac{ \p}{ \p t } \left( \int_{ \{ |z| < t \} \cap \{ z_3+x_3 > 0 \} } \int_{\mathbb R^3 }  a^E_i (v ,\o ) \cdot (E + \hat v \times B - g\mathbf e_3 )   f (t-|z|,x+z ,v) \dd v \frac{ \dd z}{|z|}  \right)
\\ = &  \int_{ \{ |z| < t \} \cap \{ z_3+x_3 > 0 \} } \int_{\mathbb R^3 } a^E_i (v ,\o ) \cdot (E + \hat v \times B - g\mathbf e_3 )   \p_t f (t-|z|,x+z ,v) \dd v \frac{ \dd z}{|z|}  
\\ & +  \int_{ \{ |z| < t \} \cap \{ z_3+x_3 > 0 \} } \int_{\mathbb R^3 } a^E_i (v ,\o ) \cdot (\p_t E + \hat v \times \p_t B - g\mathbf e_3 )  f (t-|z|,x+z ,v) \dd v \frac{ \dd z}{|z|} 
\\ & + \int_{ \{ |z| = t \} \cap \{ z_3+x_3 > 0 \} } \int_{\mathbb R^3 } a^E_i (v ,\o ) \cdot (E + \hat v \times B - g\mathbf e_3 )   f (0,x+z ,v) \dd v \frac{ \dd S_z}{t} ,
\end{split}
\Ee
We again write $\p_t f = -  \hat v \cdot \nabla_x f -(E + \hat v \times B - g \mathbf{e}_3 ) \cdot \nabla_v f $. Then from integration by parts in $v$,
\Be \label{ptEestSpos2}
\begin{split}
& \int_{ \{ |z| < t \} \cap \{ z_3+x_3 > 0 \} } \int_{\mathbb R^3 } a^E_i (v ,\o ) \cdot (E + \hat v \times B - g\mathbf e_3 )   \p_t f (t-|z|,x+z ,v) \dd v \frac{ \dd z}{|z|}
\\ = & \int_{ \{ |z| < t \} \cap \{ z_3+x_3 > 0 \} } \int_{\mathbb R^3 } -  a^E_i (v ,\o ) \cdot (E + \hat v \times B - g\mathbf e_3 ) \hat v \cdot \nabla_x f  (t-|z|,x+z ,v) \dd v \frac{ \dd z}{|z|}
\\ & + \int_{ \{ |z| < t \} \cap \{ z_3+x_3 > 0 \} } \int_{\mathbb R^3 } \nabla_v \left(  a^E_i (v ,\o ) \cdot (E + \hat v \times B - g\mathbf e_3 ) \right) \cdot (E + \hat v \times B - g \mathbf{e}_3 )  f  (t-|z|,x+z ,v) \dd v \frac{ \dd z}{|z|}.
\end{split}
\Ee
So
\Be \label{ptEestSpos11}
\begin{split}
& \frac{ \p}{\p t } \ref{EestSpos} 
\\ = &  \int_{ \{ |z| < t \} \cap \{ z_3+x_3 > 0 \} } \int_{\mathbb R^3 } -  a^E_i (v ,\o ) \cdot (E + \hat v \times B - g\mathbf e_3 ) \hat v \cdot \nabla_x f  (t-|z|,x+z ,v) \dd v \frac{ \dd z}{|z|}
\\ & + \int_{ \{ |z| < t \} \cap \{ z_3+x_3 > 0 \} } \int_{\mathbb R^3 } \nabla_v \left(  a^E_i (v ,\o ) \cdot (E + \hat v \times B - g\mathbf e_3 ) \right) \cdot (E + \hat v \times B - g \mathbf{e}_3 )  f  (t-|z|,x+z ,v) \dd v \frac{ \dd z}{|z|}  
\\ & +  \int_{ \{ |z| < t \} \cap \{ z_3+x_3 > 0 \} } \int_{\mathbb R^3 } a^E_i (v ,\o ) \cdot (\p_t E + \hat v \times \p_t B - g\mathbf e_3 )  f (t-|z|,x+z ,v) \dd v \frac{ \dd z}{|z|} 
\\ & + \int_{ \{ |z| = t \} \cap \{ z_3+x_3 > 0 \} } \int_{\mathbb R^3 } a^E_i (v ,\o ) \cdot (E + \hat v \times B - g\mathbf e_3 )   f (0,x+z ,v) \dd v \frac{ \dd S_z}{t}.
\end{split}
\Ee
Similarly,
\Be \label{ptEestSneg11}
\begin{split}
& \frac{ \p}{\p t } \ref{EestSneg} 
\\= & \iota_i \int_{ \{ |z| < t \} \cap \{ z_3+x_3 < 0 \} } \int_{\mathbb R^3 }   a^E_i (v ,\bar \o ) \cdot (E + \hat v \times B - g\mathbf e_3 ) \hat v \cdot \nabla_x f  (t-|z|,\bar x + \bar z ,v) \dd v \frac{ \dd z}{|z|}
\\ & - \iota_i \int_{ \{ |z| < t \} \cap \{ z_3+x_3 < 0 \} } \int_{\mathbb R^3 } \nabla_v \left(  a^E_i (v ,\bar \o ) \cdot (E + \hat v \times B - g\mathbf e_3 ) \right) \cdot (E + \hat v \times B - g \mathbf{e}_3 )  f  (t-|z|,\bar x + \bar z  ,v) \dd v \frac{ \dd z}{|z|}  
\\ & - \iota_i  \int_{ \{ |z| < t \} \cap \{ z_3+x_3 < 0 \} } \int_{\mathbb R^3 } a^E_i (v ,\bar \o ) \cdot (\p_t E + \hat v \times \p_t B - g\mathbf e_3 )  f (t-|z|,\bar x + \bar z ,v) \dd v \frac{ \dd z}{|z|} 
\\ & - \iota_i \int_{ \{ |z| = t \} \cap \{ z_3+x_3 < 0 \} } \int_{\mathbb R^3 } a^E_i (v ,\bar \o ) \cdot (E + \hat v \times B - g\mathbf e_3 )   f (0,\bar x + \bar z ,v) \dd v \frac{ \dd S_z}{t}.
\end{split}
\Ee
Next, using change of variables $z_\parallel = y_\parallel - x_\parallel$, we have
\Be \label{ptEestbdrypos1}
\begin{split}
& \frac{ \p }{\p t } \ref{Eestbdrypos}
\\ = &  \frac{ \p }{\p t } \left(   \int_{ |z_\parallel | < \sqrt{ t^2 - x_3^2 } }  \int_{\R^3} \left( \delta_{i3 } -  \frac{(\o_i + \hat{v}_i)\hat{v}_3}{1+ \hat v \cdot \o } \right) f (t- ( |z_\parallel |^2 + x_3^2 )^{1/2} ,z_\parallel + x_\parallel, 0 ,v) \dd v\frac{ \dd z_\parallel}{(|z_\parallel |^2 + x_3^2)^{1/2}} \right)
\\ = &  \int_{ |z_\parallel | < \sqrt{ t^2 - x_3^2 } }  \int_{\R^3} \left( \delta_{i3 } -  \frac{(\o_i + \hat{v}_i)\hat{v}_3}{1+ \hat v \cdot \o } \right)\p_t f (t- ( |z_\parallel |^2 + x_3^2 )^{1/2} ,z_\parallel + x_\parallel, 0 ,v) \dd v\frac{ \dd z_\parallel}{(|z_\parallel |^2 + x_3^2)^{1/2}}
\\ & + \int_0^{2\pi }   \int_{\R^3} \left( \delta_{i3 } -  \frac{(\o_i + \hat{v}_i)\hat{v}_3}{1+ \hat v \cdot \o } \right) f ( 0  ,z_\parallel + x_\parallel, 0 ,v) \frac{t}{\sqrt{t^2 -x_3^2 } } \sqrt{t^2 -x_3^2 }  \dd v\frac{ \dd \theta}{t}.
\end{split}
\Ee
We again write $\p_t f = -  \hat v \cdot \nabla_x f -(E + \hat v \times B - g \mathbf{e}_3 ) \cdot \nabla_v f $ and use integration by parts in $v$ to get
\Be \label{ptEestbdrypos2}
\begin{split}
&  \int_{ |z_\parallel | < \sqrt{ t^2 - x_3^2 } }  \int_{\R^3} \left( \delta_{i3 } -  \frac{(\o_i + \hat{v}_i)\hat{v}_3}{1+ \hat v \cdot \o } \right)\p_t f (t- ( |z_\parallel |^2 + x_3^2 )^{1/2} ,z_\parallel + x_\parallel, 0 ,v) \dd v\frac{ \dd z_\parallel}{(|z_\parallel |^2 + x_3^2)^{1/2}}
 \\ = &  \int_{ |z_\parallel | < \sqrt{ t^2 - x_3^2 } }  \int_{\R^3} \left( \delta_{i3 } -  \frac{(\o_i + \hat{v}_i)\hat{v}_3}{1+ \hat v \cdot \o } \right) \frac{\hat v \cdot \nabla_x f (t- ( |z_\parallel |^2 + x_3^2 )^{1/2} ,z_\parallel + x_\parallel, 0 ,v) }{ (|z_\parallel |^2 + x_3^2)^{1/2}} \dd v \dd z_\parallel
 \\ & + \int_{ |z_\parallel | < \sqrt{ t^2 - x_3^2 } }  \int_{\R^3} \nabla_v \left( \delta_{i3 } -  \frac{(\o_i + \hat{v}_i)\hat{v}_3}{1+ \hat v \cdot \o } \right) \cdot  ( E + \hat v \times B -  g \mathbf e_3 ) \frac{ f (t- ( |z_\parallel |^2 + x_3^2 )^{1/2} ,z_\parallel + x_\parallel, 0 ,v) }{ (|z_\parallel |^2 + x_3^2)^{1/2}} \dd v \dd z_\parallel.
\end{split}
\Ee
Thus,
\Be \label{ptEestbdrypos11}
\begin{split}
& \frac{ \p }{\p t } \ref{Eestbdrypos}
\\ = &  \int_{ |z_\parallel | < \sqrt{ t^2 - x_3^2 } }  \int_{\R^3} \left( \delta_{i3 } -  \frac{(\o_i + \hat{v}_i)\hat{v}_3}{1+ \hat v \cdot \o } \right) \frac{\hat v \cdot \nabla_x f (t- ( |z_\parallel |^2 + x_3^2 )^{1/2} ,z_\parallel + x_\parallel, 0 ,v) }{ (|z_\parallel |^2 + x_3^2)^{1/2}} \dd v \dd z_\parallel
 \\ & + \int_{ |z_\parallel | < \sqrt{ t^2 - x_3^2 } }  \int_{\R^3} \nabla_v \left( \delta_{i3 } -  \frac{(\o_i + \hat{v}_i)\hat{v}_3}{1+ \hat v \cdot \o } \right) \cdot  ( E + \hat v \times B -  g \mathbf e_3 ) \frac{ f (t- ( |z_\parallel |^2 + x_3^2 )^{1/2} ,z_\parallel + x_\parallel, 0 ,v) }{ (|z_\parallel |^2 + x_3^2)^{1/2}} \dd v \dd z_\parallel
\\ & + \int_0^{2\pi }   \int_{\R^3} \left( \delta_{i3 } -  \frac{(\o_i + \hat{v}_i)\hat{v}_3}{1+ \hat v \cdot \o } \right) f ( 0  ,z_\parallel + x_\parallel, 0 ,v)  \dd v \dd \theta.
\end{split}
\Ee
Similarly, 
\Be \label{ptEestbdryneg11}
\begin{split}
& \frac{ \p }{\p t } \ref{Eestbdryneg}
\\ = & - \iota_i \int_{ |z_\parallel | < \sqrt{ t^2 - x_3^2 } }  \int_{\R^3} \left( \delta_{i3 } -  \frac{(\bar \o_i + \hat{v}_i)\hat{v}_3}{1+ \hat v \cdot \o } \right) \frac{\hat v \cdot \nabla_x f (t- ( |z_\parallel |^2 + x_3^2 )^{1/2} ,z_\parallel + x_\parallel, 0 ,v) }{ (|z_\parallel |^2 + x_3^2)^{1/2}} \dd v \dd z_\parallel
 \\ &  - \iota_i  \int_{ |z_\parallel | < \sqrt{ t^2 - x_3^2 } }  \int_{\R^3} \nabla_v \left( \delta_{i3 } -  \frac{(\bar \o_i + \hat{v}_i)\hat{v}_3}{1+ \hat v \cdot \o } \right) \cdot  ( E + \hat v \times B -  g \mathbf e_3 ) \frac{ f (t- ( |z_\parallel |^2 + x_3^2 )^{1/2} ,z_\parallel + x_\parallel, 0 ,v) }{ (|z_\parallel |^2 + x_3^2)^{1/2}} \dd v \dd z_\parallel
\\ &  - \iota_i  \int_0^{2\pi }   \int_{\R^3} \left( \delta_{i3 } -  \frac{(\bar \o_i + \hat{v}_i)\hat{v}_3}{1+ \hat v \cdot \o } \right) f ( 0  ,z_\parallel + x_\parallel, 0 ,v)  \dd v \dd \theta.
\end{split}
\Ee

Next, using the change of variables $z = y-x$ and then the polar coordinate for $z$, we have
\Be \label{ptEestinitialpos1}
\begin{split}
 \ref{Eestinitialpos} =  & -  \int_{ \{ |z| = t\} \cap \{  z_3 + x_3 >0 \} } \int_{\R^3}  \sum_j \o_j  \left(\delta_{ij} - \frac{(\o_i + \hat{v}_i)\hat{v}_j}{1+ \hat v \cdot \o }\right) f(0,x+z ,v) \dd v \frac{\dd S_z}{|z|}
 \\ = & -   \int_0^{\arccos(-x_3/t )} \int_0^{2\pi}  \int_{\R^3}   \sum_j \o_j  \left(\delta_{ij} - \frac{(\o_i + \hat{v}_i)\hat{v}_j}{1+ \hat v \cdot \o }\right)  \frac{ f(0,x+z ,v)}{t}  t^2 \sin \phi \dd v d \theta d \phi.
  \end{split}
\Ee
Thus
\Be \label{ptEestinitialpos11}
\begin{split}
\frac{\p}{\p t }  \ref{Eestinitialpos} = & -  \int_0^{\arccos(-x_3/t )} \int_0^{2\pi}  \int_{\R^3}   \sum_j \o_j  \left(\delta_{ij} - \frac{(\o_i + \hat{v}_i)\hat{v}_j}{1+ \hat v \cdot \o }\right)  f(0,x+z ,v) \sin \phi  \dd v  d \theta d \phi
\\ & -   \int_0^{\arccos(-x_3/t )} \int_0^{2\pi}  \int_{\R^3}   \sum_j \o_j  \left(\delta_{ij} - \frac{(\o_i + \hat{v}_i)\hat{v}_j}{1+ \hat v \cdot \o }\right)( \nabla_x f(0,x+z ,v) \cdot z ) \sin \phi  \dd v  d \theta d \phi
\\ & -  \int_0^{2\pi}  \int_{\R^3}   \sum_j \o_j  \left(\delta_{ij} - \frac{(\o_i + \hat{v}_i)\hat{v}_j}{1+ \hat v \cdot \o }\right)  f(0,x_\parallel +z_\parallel, 0 ,v) \frac{-\sqrt{ 1- (x_3/t)^2 } }{\sqrt{ 1- (x_3/t)^2 } } \frac{-x_3}{t^2 } t \  \dd v  d \theta.
\end{split}
\Ee
Similarly, 
\Be \label{ptEestinitialneg11}
\begin{split}
\frac{\p}{\p t }  \ref{Eestinitialneg} = & \iota_i  \int_{\arccos(-x_3/t )}^0 \int_0^{2\pi}  \int_{\R^3}    \left(\delta_{ij} - \frac{( \bar \o_i + \hat{v}_i)\hat{v}_j}{1+ \hat v \cdot \bar \o }\right)  f(0,\bar x+ \bar z ,v) \sin \phi  \dd v  d \theta d \phi
\\ & +  \iota_i  \int_{\arccos(-x_3/t )}^0 \int_0^{2\pi}  \int_{\R^3}   \bar \o_j  \left(\delta_{ij} - \frac{(\bar \o_i + \hat{v}_i)\hat{v}_j}{1+ \hat v \cdot \bar \o }\right)( \nabla_x f(0,\bar x+ \bar z ,v) \cdot \bar z ) \sin \phi  \dd v  d \theta d \phi
\\ & -  \iota_i  \int_0^{2\pi}  \int_{\R^3}   \bar \o_j \left(\delta_{ij} - \frac{(\bar \o_i + \hat{v}_i)\hat{v}_j}{1+ \hat v \cdot \bar \o }\right)  f(0,x_\parallel +z_\parallel, 0 ,v)  \frac{x_3}{t }  \  \dd v  d \theta.
\end{split}
\Ee

Finally, for \ref{Eest3bdrycontri}, using change of variables $z_\parallel = y_\parallel - x_\parallel$, we have
\Be \label{ptEest6}
\begin{split}
 \frac{ \p}{\p t } \ref{Eest3bdrycontri}   = & - 2 \delta_{i3}   \int_{ |z_\parallel | < \sqrt{ t^2 - x_3^2 } }  \int_{\R^3}  \frac{\hat v \cdot \nabla_x f (t- ( |z_\parallel |^2 + x_3^2 )^{1/2} ,z_\parallel + x_\parallel, 0 ,v) }{ (|z_\parallel |^2 + x_3^2)^{1/2}} \dd v \dd z_\parallel
 %\\ & + \int_{ |z_\parallel | < \sqrt{ t^2 - x_3^2 } }  \int_{\R^3} \nabla_v \left( \delta_{i3 } -  \frac{(\o_i + \hat{v}_i)\hat{v}_3}{1+ \hat v \cdot \o } \right) \cdot  ( E + \hat v \times B -  g \mathbf e_3 ) \frac{ f (t- ( |z_\parallel |^2 + x_3^2 )^{1/2} ,z_\parallel + x_\parallel, 0 ,v) }{ (|z_\parallel |^2 + x_3^2)^{1/2}} \dd v \dd z_\parallel
\\ & - 2 \delta_{i3} \int_0^{2\pi }   \int_{\R^3}  f ( 0  ,z_\parallel + x_\parallel, 0 ,v)  \dd v \dd \theta.
\end{split}
\Ee

Next, we calculate the spatial derivatives to $E(t,x)$ by taking $\frac{\p}{\p_{x_k}} $ to \ref{Eesttat0pos}-\ref{Eest3bdrycontri}. We have
\Be \label{pxEesttat0pos}
\begin{split}
& \frac{ \p}{\p_{x_k } }  \ref{Eesttat0pos} + \frac{ \p}{\p_{x_k } }  \ref{Eesttat0neg}  
\\ = &   \frac{1}{4\pi } \int_0^{\arccos(-x_3/t ) }  \int_{0 }^{2\pi}    \left( t \p_{x_k } \p_t  E_{0,i}( x + z  ) +  \p_{x_k }  E_{0,i} (x+z) +  \nabla  \p_{x_k }  E_{0,i} ( x+ z) \cdot z \right)  \sin \phi d\theta d \phi
\\ & + \frac{1}{4 \pi t}  \delta_{k3 } \int_0^{2\pi } (  t \p_t  E_{0,i}( x_\parallel + z_\parallel , 0  ) +  E_{0,i} (x_\parallel + z_\parallel , 0) + \nabla  E_{0,i} ( x_\parallel + z_\parallel , 0) \cdot z )  d\theta
\\ & -  \frac{\iota_i \iota_k}{4\pi}  \int_{\arccos( -x_3 /t ) }^{\pi } \int_0^{2 \pi } \big(   t \p_{x_k } \p_t  E_{0,i}( \bar x + \bar z  ) +  \p_{x_k }  E_{0,i} (\bar x + \bar z) +  \nabla  \p_{x_k }  E_{0,i} ( \bar x + \bar z) \cdot \bar z  \big) \sin \phi d\theta d\phi
\\ & + \frac{\iota_i }{4\pi t  } \delta_{k3 }  \int_0^{2 \pi }   \left( t \p_t  E_{0,i}( x_\parallel+  z_\parallel,0 ) +  E_{0,i} (x_\parallel+  z_\parallel,0) + \nabla  E_{0,i} (x_\parallel+  z_\parallel,0) \cdot \bar z \right)   d\theta
.
\end{split}
\Ee
Next, using the change of variables $ z = y-x$ we have
\Be \label{Eestbulkpos12}
\begin{split}
\frac{ \p}{\p_{x_k } }  \ref{Eestbulkpos} = & \int_{ \{ |z| < t  \} \cap \{z_3+x_3 > 0 \} } \int_{\R^3} \frac{ (|\hat{v}|^2-1 )(\hat{v}_i +  \o_i ) }{|z|^2 (1+  \hat v \cdot \o )^2} \p_{x_k}f (t-|z| ,x+z ,v) \dd v \dd z  
 \\& + \delta_{k3} \int_{ \{  (| z_\parallel  |^2 + |x_3|^2 )^{1/2} < t  \}  }   \int_{\R^3}\frac{ (|\hat{v}|^2-1  )(\hat{v}_i +  \o_i ) }{ ( |z_\parallel  |^2 + x_3^2 ) (1+  \hat v \cdot \o )^2} f(t-( |z_\parallel  |^2 + x_3^2 )^{1/2}, x_\parallel + z_\parallel , 0  ,v) \dd v \dd z_\parallel,
\end{split}
\Ee
\Be \label{Eestbulkneg12}
\begin{split}
\frac{ \p}{\p_{x_k } }  \ref{Eestbulkneg} = & -  \iota_i \iota_k \int_{ \{ |z | < t  \} \cap \{z_3 +x_3 < 0 \} } \int_{\R^3}  \frac{ (|\hat{v}|^2-1 )(\hat{v}_i + \bar \o_i ) }{|z|^2 (1+  \hat v \cdot \bar \o )^2} \p_{x_k}f (t-|z| , \bar x+ \bar z ,v) \dd v \dd z  
 \\& + \iota_i \delta_{k3} \int_{ \{  (| z_\parallel |^2 + |x_3|^2 )^{1/2} < t  \}  }   \int_{\R^3}\frac{ (|\hat{v}|^2-1  )(\hat{v}_i + \bar \o_i ) }{ ( | z_\parallel |^2 + x_3^2 ) (1+  \hat v \cdot \bar \o )^2} f(t-( |z_\parallel  |^2 + x_3^2 )^{1/2}, x_\parallel + z_\parallel , 0  ,v) \dd v \dd z_\parallel.
\end{split}
\Ee

Next, using the change of variables $z = y -x $ and taking $\frac{\p}{\p_{x_k } } $ derivative to \ref{EestSpos} and  \ref{EestSneg} we have
\Be \label{EestSpos12}
\begin{split}
\frac{ \p }{\p_{x_k } } \ref{EestSpos} = &  \int_{ \{ |z| < t  \} \cap \{z_3+x_3 > 0 \} }  \int_{\R^3}   a^E_i (v ,\o ) \cdot ( \p_{x_k } E +   \hat v  \times  \p_{x_k } B)   f (t-|z|,x+z ,v) \dd v \frac{ \dd z}{|z|}
\\  & +  \int_{ \{ |z| < t  \} \cap \{z_3+x_3 > 0 \} }  \int_{\R^3}   a^E_i (v ,\o ) \cdot (  E +   \hat v  \times B) \p_{x_k}  f (t-|z|,x+z ,v) \dd v \frac{ \dd z}{|z|}
\\ & + \delta_{k3} \int_{ \{  (| z_\parallel  |^2 + |x_3|^2 )^{1/2} < t  \}  }  \int_{\R^3}   a^E_i (v ,\o ) \cdot (  E +   \hat v  \times B)   \frac{  f (t-(| z_\parallel  |^2 + |x_3|^2 )^{1/2},x_\parallel +z_\parallel, 0 ,v) }{(| z_\parallel  |^2 + |x_3|^2 )^{1/2}} \dd v \dd z_\parallel,
\end{split}
\Ee
\Be \label{EestSneg2}
\begin{split}
 \frac{ \p }{\p_{x_k } } \ref{EestSneg} 
 = & - \iota_i \int_{ \{ |z| < t  \} \cap \{z_3+x_3 < 0 \} }  \int_{\R^3}   a^E_i (v ,\bar \o ) \cdot ( \p_{x_k } E +   \hat v  \times  \p_{x_k } B)   f (t-|z|,\bar x + \bar z ,v) \dd v \frac{ \dd z}{|z|}
\\  & - \iota_i \iota_k \int_{ \{ |z| < t  \} \cap \{z_3+x_3 < 0 \} }  \int_{\R^3}   a^E_i (v ,\bar \o ) \cdot (  E +   \hat v  \times B) \p_{x_k}  f (t-|z|,\bar x + \bar z ,v) \dd v \frac{ \dd z}{|z|}
\\ & + \iota_i \delta_{k3} \int_{ \{  (| z_\parallel  |^2 + |x_3|^2 )^{1/2} < t  \}  }   \int_{\R^3}   a^E_i (v ,\bar \o ) \cdot (  E +   \hat v  \times B)   \frac{f (t-(| z_\parallel  |^2 + |x_3|^2 )^{1/2},x_\parallel+z_\parallel , 0 ,v) }{(| z_\parallel  |^2 + |x_3|^2 )^{1/2}} \dd v  \dd z_\parallel.
\end{split}
\Ee

Next, using the change of variables $z_\parallel = y_\parallel - x_\parallel $ we have
\Be \label{Eestbdrypos1}
\begin{split}
& \frac{ \p }{\p_{x_k } } \ref{Eestbdrypos}
\\ = &  \frac{ \p }{\p_{x_k } } \left(   \int_{ \sqrt{t^2 - |z_\parallel |^2 } > x_3}  
\int_{\R^3} \left( \delta_{i3 } -  \frac{(\o_i + \hat{v}_i)\hat{v}_3}{1+ \hat v \cdot \o } \right) f (t- ( |z_\parallel |^2 + x_3^2 )^{1/2} ,z_\parallel + x_\parallel, 0 ,v) \dd v\frac{ \dd z_\parallel}{(|z_\parallel |^2 + x_3^2)^{1/2}} \right)
\\ = &  ( 1 - \delta_{k3} )   \int_{ \sqrt{t^2 - |z_\parallel |^2 } > x_3}  \int_{\R^3}   \left( \delta_{i3 } -\frac{(\o_i + \hat{v}_i)\hat{v}_3}{1+ \hat v \cdot \o } \right)  \p_{x_k } f (t- ( |z_\parallel |^2 + x_3^2 )^{1/2} ,z_\parallel + x_\parallel, 0 ,v) \dd v\frac{ \dd z_\parallel}{(|z_\parallel |^2 + x_3^2)^{1/2}}
\\ & + \delta_{k3 }  \int_{ \sqrt{t^2 - |z_\parallel |^2 } > x_3}  \int_{\R^3}   \left( \delta_{i3 } -\frac{(\o_i + \hat{v}_i)\hat{v}_3}{1+ \hat v \cdot \o } \right) f (t- ( |z_\parallel |^2 + x_3^2 )^{1/2} ,z_\parallel + x_\parallel, 0 ,v) \dd v \  \left( \frac{ -x_3}{(|z_\parallel |^2 + x_3^2)^{3/2}} \right)  \dd z_\parallel
\\ & + \delta_{k3}   \int_{ \sqrt{t^2 - |z_\parallel |^2 } > x_3}  \int_{\R^3} \left( \delta_{i3} - \frac{(\o_i + \hat{v}_i)\hat{v}_3}{1+ \hat v \cdot \o } \right)  \p_{t } f (t- ( |z_\parallel |^2 + x_3^2 )^{1/2} ,z_\parallel + x_\parallel, 0 ,v) \dd v\frac{ - x_3 }{(|z_\parallel |^2 + x_3^2)} \dd z_\parallel
\\ & -   \delta_{k3}   \int_{ \sqrt{t^2 - |z_\parallel |^2 } = x_3}   \int_{\R^3} \left( \delta_{i3} -  \frac{(\o_i + \hat{v}_i)\hat{v}_3}{1+ \hat v \cdot \o }  \right) f (t- ( |z_\parallel |^2 + x_3^2 )^{1/2} ,z_\parallel + x_\parallel, 0 ,v) \dd v  \frac{x_3 }{\sqrt{t^2 - x_3^2} }  \frac{ \dd S_{ z_\parallel } }{t}.
\end{split} 
\Ee 
Now from \ref{VMfrakF}, we write $\p_t f = -  \hat v \cdot \nabla_x f -(E + \hat v \times B - g \mathbf{e}_3 ) \cdot \nabla_v f $. Then integration by parts in $v$, and that $ \nabla_v \cdot ( \hat v \times B) =0$, we get
\Be \label{Eestbdrypos3}
\begin{split}
&    \int_{ \sqrt{t^2 - |z_\parallel |^2 } > x_3}  \int_{\R^3}  \left( \delta_{i3} - \frac{(\o_i + \hat{v}_i)\hat{v}_3}{1+ \hat v \cdot \o } \right)  \p_{t } f (t- ( |z_\parallel | + x_3^2 )^{1/2} ,z_\parallel + x_\parallel, 0 ,v) \dd v \frac{ x_3 }{(|z_\parallel |^2 + x_3^2)} \dd z_\parallel 
\\ =  &  -  \int_{ \sqrt{t^2 - |z_\parallel |^2 } > x_3}  \int_{\R^3} \left( \delta_{i3} - \frac{(\o_i + \hat{v}_i)\hat{v}_3}{1+ \hat v \cdot \o } \right)\left(  \hat v \cdot \nabla_x f (t- ( |z_\parallel | + x_3^2 )^{1/2} ,z_\parallel + x_\parallel, 0 ,v) \right) \dd v \frac{ x_3 }{(|z_\parallel |^2 + x_3^2)} \dd z_\parallel 
\\ & +    \int_{ \sqrt{t^2 - |z_\parallel |^2 } > x_3}  \int_{\R^3}  \nabla_v   \left( \delta_{i3} - \frac{(\o_i + \hat{v}_i)\hat{v}_3}{1+ \hat v \cdot \o } \right)  (E + \hat v \times B - g \mathbf{e}_3 )  \frac{x_3  f (t- ( |z_\parallel | + x_3^2 )^{1/2} ,z_\parallel + x_\parallel, 0 ,v) }{(|z_\parallel |^2 + x_3^2)} \dd v \dd z_\parallel. 
\end{split}
\Ee
Therefore,
\Be \label{Eestbdrypos21}
\begin{split}
& \frac{ \p }{\p_{x_k } } \ref{Eestbdrypos}
\\ = &  ( 1 - \delta_{k3} )   \int_{ \sqrt{t^2 - |z_\parallel |^2 } > x_3}  \int_{\R^3}   \left( \delta_{i3 } -\frac{(\o_i + \hat{v}_i)\hat{v}_3}{1+ \hat v \cdot \o } \right)  \p_{x_k } f (t- ( |z_\parallel |^2 + x_3^2 )^{1/2} ,z_\parallel + x_\parallel, 0 ,v) \dd v\frac{ \dd z_\parallel}{(|z_\parallel |^2 + x_3^2)^{1/2}}
\\ & + \delta_{k3 }  \int_{ \sqrt{t^2 - |z_\parallel |^2 } > x_3}  \int_{\R^3}   \left( \delta_{i3 } -\frac{(\o_i + \hat{v}_i)\hat{v}_3}{1+ \hat v \cdot \o } \right) f (t- ( |z_\parallel |^2 + x_3^2 )^{1/2} ,z_\parallel + x_\parallel, 0 ,v) \dd v \   \left( \frac{ -x_3}{(|z_\parallel |^2 + x_3^2)^{3/2}} \right)  \dd z_\parallel
\\ &      - \delta_{k3}  \int_{ \sqrt{t^2 - |z_\parallel |^2 } > x_3}  \int_{\R^3} \left( \delta_{i3} - \frac{(\o_i + \hat{v}_i)\hat{v}_3}{1+ \hat v \cdot \o } \right)  \left(  \hat v \cdot \nabla_x f (t- ( |z_\parallel | + x_3^2 )^{1/2} ,z_\parallel + x_\parallel, 0 ,v) \right) \dd v \frac{ x_3 }{(|z_\parallel |^2 + x_3^2)} \dd z_\parallel 
\\ & +  \delta_{k3}  \int_{ \sqrt{t^2 - |z_\parallel |^2 } > x_3}  \int_{\R^3}  \nabla_v  \left( \delta_{i3} - \frac{(\o_i + \hat{v}_i)\hat{v}_3}{1+ \hat v \cdot \o } \right) (E + \hat v \times B - g \mathbf{e}_3 )  \frac{ x_3 f (t- ( |z_\parallel | + x_3^2 )^{1/2} ,z_\parallel + x_\parallel, 0 ,v) }{(|z_\parallel |^2 + x_3^2)} \dd v \dd z_\parallel
\\ & -   \delta_{k3}   \int_{ \sqrt{t^2 - |z_\parallel |^2 } = x_3}   \int_{\R^3} \left( \delta_{i3} - \frac{(\o_i + \hat{v}_i)\hat{v}_3}{1+ \hat v \cdot \o }  \right) f (t- ( |z_\parallel |^2 + x_3^2 )^{1/2} ,z_\parallel + x_\parallel, 0 ,v) \dd v  \frac{x_3 }{\sqrt{t^2 - x_3^2} }   \frac{ \dd S_{ z_\parallel } }{t}.
\end{split} 
\Ee 
Similarly,
\Be \label{Eestbdryneg21}
\begin{split}
& \frac{ \p }{\p_{x_k } } \ref{Eestbdryneg}
\\ = & -\iota_i ( 1 - \delta_{k3} )   \int_{ \sqrt{t^2 - |z_\parallel |^2 } < x_3}  \int_{\R^3}   \left( \delta_{i3 } -\frac{(\bar \o_i + \hat{v}_i)\hat{v}_3}{1+ \hat v \cdot \bar \o } \right)  \p_{x_k } f (t- ( |z_\parallel |^2 + x_3^2 )^{1/2} ,z_\parallel + x_\parallel, 0 ,v) \dd v\frac{ \dd z_\parallel}{(|z_\parallel |^2 + x_3^2)^{1/2}}
\\ &  -\iota_i  \delta_{k3 }  \int_{ \sqrt{t^2 - |z_\parallel |^2 } < x_3}  \int_{\R^3}   \left( \delta_{i3 } -\frac{(\bar \o_i + \hat{v}_i)\hat{v}_3}{1+ \hat v \cdot \bar \o } \right) f (t- ( |z_\parallel |^2 + x_3^2 )^{1/2} ,z_\parallel + x_\parallel, 0 ,v) \dd v \   \left( \frac{-x_3}{(|z_\parallel |^2 + x_3^2)^{3/2}} \right)  \dd z_\parallel
\\ &   + \iota_i  \delta_{k3}  \int_{ \sqrt{t^2 - |z_\parallel |^2 } < x_3}  \int_{\R^3} \left( \delta_{i3 } -\frac{(\bar \o_i + \hat{v}_i)\hat{v}_3}{1+ \hat v \cdot \bar \o } \right) \left(  \hat v \cdot \nabla_x f (t- ( |z_\parallel | + x_3^2 )^{1/2} ,z_\parallel + x_\parallel, 0 ,v) \right) \dd v \frac{ x_3 }{(|z_\parallel |^2 + x_3^2)} \dd z_\parallel 
\\ &  -\iota_i  \delta_{k3}  \int_{ \sqrt{t^2 - |z_\parallel |^2 } > x_3}  \int_{\R^3}  \nabla_v   \left( \delta_{i3 } -\frac{(\bar \o_i + \hat{v}_i)\hat{v}_3}{1+ \hat v \cdot \bar \o } \right) (E + \hat v \times B - g \mathbf{e}_3 ) \frac{x_3  f (t- ( |z_\parallel | + x_3^2 )^{1/2} ,z_\parallel + x_\parallel, 0 ,v) }{(|z_\parallel |^2 + x_3^2)} \dd v  \dd z_\parallel
\\ &  - \iota_i   \delta_{k3}   \int_{ \sqrt{t^2 - |z_\parallel |^2 } = x_3}   \int_{\R^3} \left( \delta_{i3} -  \frac{( \bar \o_i + \hat{v}_i)\hat{v}_3}{1+ \hat v \cdot \bar \o }  \right) f (t- ( |z_\parallel |^2 + x_3^2 )^{1/2} ,z_\parallel + x_\parallel, 0 ,v) \dd v  \frac{x_3 }{\sqrt{t^2 - x_3^2} }   \frac{ \dd S_{ z_\parallel } }{t}.
\end{split} 
\Ee 

Next, by using the change of variables $z = y - x$ and spherical coordinate for $z$ for \ref{Eestinitialpos} and \ref{Eestinitialneg}, we have
\Be \label{pxEestinitialpos}
\begin{split}
 \frac{ \p}{\p_{x_k } }   \ref{Eestinitialpos}  =  & -   \int_{ 0}^{\arccos(-x_3/t ) }   \int_0^{2\pi}  \sum_j \o_j  \left(\delta_{ij} - \frac{(\o_i + \hat{v}_i)\hat{v}_j}{1+ \hat v \cdot \o }\right) \p_{x_k}  f(0, x+z ,v)   ( t  \sin \phi )  \,  \dd v \dd \theta \dd \phi 
\\ &  - \delta_{k3}   \int_0^{2\pi}  \sum_j \o_j  \left(\delta_{ij} - \frac{(\o_i + \hat{v}_i)\hat{v}_j}{1+ \hat v \cdot \o }\right)   f(0, z_\parallel +x_\parallel, 0 ,v)  \,  \dd v \dd \theta,
\end{split}
\Ee
\Be \label{pxEestinitialneg}
\begin{split}
\frac{ \p}{\p_{x_k } }   \ref{Eestinitialneg}    =  &  \iota_i \iota_k \int_{\arccos(-x_3/t ) }^0   \int_0^{2\pi}  \bar \o_j \left(\delta_{ij} - \frac{(\bar \o_i + \hat{v}_i)\hat{v}_j}{1+ \hat v \cdot \bar \o }\right) \p_{x_k}  f(0, \bar x + \bar z ,v)   ( t  \sin \phi )  \,  \dd v \dd \theta \dd \phi 
\\ &  - \iota_i \delta_{k3}   \int_0^{2\pi}  \bar \o_j  \left(\delta_{ij} - \frac{(\bar \o_i + \hat{v}_i)\hat{v}_j}{1+ \hat v \cdot \bar \o }\right)   f(0, z_\parallel +x_\parallel, 0 ,v)  \,  \dd v \dd \theta.
\end{split}
\Ee

Finally, taking $ \frac{\p}{\p x_k }  \ref{Eest3bdrycontri}$, using the equation \ref{VMfrakF}, and then integration by parts in $v$ and we have
\Be
\begin{split}
&  \frac{\p}{\p x_k }  \ref{Eest3bdrycontri} 
\\ & =  - 2( 1 - \delta_{k3} )  \delta_{i3}  \int_{ \sqrt{t^2 - |z_\parallel |^2 } > x_3}  \int_{\R^3}      \p_{x_k } f (t- ( |z_\parallel |^2 + x_3^2 )^{1/2} ,z_\parallel + x_\parallel, 0 ,v)   \dd v\frac{ \dd z_\parallel}{(|z_\parallel |^2 + x_3^2)^{1/2}}
\\ & - 2\delta_{k3 }  \delta_{i3}    \int_{ \sqrt{t^2 - |z_\parallel |^2 } > x_3}  \int_{\R^3}    f (t- ( |z_\parallel |^2 + x_3^2 )^{1/2} ,z_\parallel + x_\parallel, 0 ,v) \dd v  \left( \frac{ -x_3}{(|z_\parallel |^2 + x_3^2)^{3/2}} \right)    \dd z_\parallel
\\ & - 2 \delta_{k3}  \delta_{i3}    \int_{ \sqrt{t^2 - |z_\parallel |^2 } > x_3}  \int_{\R^3}     \hat v \cdot \nabla_x f (t- ( |z_\parallel |^2 + x_3^2 )^{1/2} ,z_\parallel + x_\parallel, 0 ,v) \dd v\frac{  x_3 }{(|z_\parallel |^2 + x_3^2)}  \dd z_\parallel
\\ & +  2  \delta_{k3}   \delta_{i3}   \int_{ 0}^{2 \pi }    \int_{\R^3} f (0 ,z_\parallel + x_\parallel, 0 ,v)  \frac{  x_3 }{t} \dd v    d\theta.
\end{split}
\Ee

\subsection{Derivatives of the Magnetic field in a half space}
We calculate the derivatives of our representation of $B$. We first compute $\p_t B$ by taking $\frac{\p}{\p_t }$ to each term from \ref{Besttat0pos}--\ref{Bestbdrycontri}. Similar to \ref{Eesttat0posest1}, we have
\Be \label{Besttat0posest1}
\begin{split}
& \frac{ \p }{\p_t} \ref{Besttat0pos}  + \frac{ \p }{\p_t} \ref{Besttat0neg} 
\\ = &  \frac{1}{4\pi}  \int_0^{\arccos( -x_3 /t ) } \int_0^{2 \pi } \big(  \p_t  B_{0,i}( x+ z  )+ t  \nabla \p_t B_{0,i}( x+ z  ) \cdot \frac{z}{t}  + \nabla B_{0,i} (x+z) \cdot \frac{z}{t}
\\ & \quad \quad \quad \quad \quad  \quad \quad \quad \quad \quad +\frac{z}{t}  \cdot \nabla^2  B_{0,i} (x+z) \cdot z + \nabla B_0(x+z) \cdot \frac{z}{t}  \big) \sin \phi d\theta d\phi
\\ & + \frac{1}{4\pi } \int_0^{2 \pi }  \left( t \p_t  B_{0,i}( x_\parallel+ z_\parallel,0 ) +  B_{0,i} (x_\parallel+ z_\parallel,0) + \nabla  B_{0,i} (x_\parallel+ z_\parallel,0) \cdot z \right)  \frac{x_3}{t^2 }  d\theta
\\ & +  \frac{\iota_i}{4\pi}  \int_{\arccos( -x_3 /t ) }^{\pi } \int_0^{2 \pi } \big(  \p_t  B_{0,i}( \bar x + \bar z  )+ t  \nabla \p_t B_{0,i}( \bar x + \bar z  ) \cdot \frac{\bar z}{t}  + \nabla B_{0,i} (\bar x + \bar z) \cdot \frac{\bar z}{t}
\\ & \quad \quad \quad \quad \quad  \quad \quad \quad \quad \quad +\frac{\bar z}{t}  \cdot \nabla^2  B_{0,i} (\bar x + \bar z) \cdot \bar z + \nabla B_0(\bar x + \bar z) \cdot \frac{\bar z}{t}  \big) \sin \phi d\theta d\phi
\\ & - \frac{\iota_i }{4\pi } \int_0^{2 \pi }  \left( t \p_t  B_{0,i}( x_\parallel+  z_\parallel,0 ) +  B_{0,i} (x_\parallel+  z_\parallel,0) + \nabla  B_{0,i} (x_\parallel+  z_\parallel,0) \cdot \bar z \right)  \frac{x_3}{t^2 }  d\theta,
\end{split}
\Ee
where $z = \begin{bmatrix} t sin \phi \cos \theta \\ t \sin \phi \sin \theta \\ t \cos \phi \end{bmatrix} $. Similar to \ref{ptEestbulkpos11}, and \ref{ptEestbulkneg11}, we get
\Be \label{ptBestbulkpos11}
\begin{split}
\frac{\p}{\p t }  \ref{Bestbulkpos}  = &  \int_{ \{ |z| < t \}  \cap \{x_3 + z_3 > 0\}}   \int_{\R^3}  \frac{ (|\hat{v}|^2-1  )( \o \times \hat v )_i }{|z|^2 (1+  \hat v \cdot \o )^2}    \hat v \cdot \nabla_x f(t-|z|,x+z ,v)\dd v \dd z
 \\ & - \int_{ \{ |z| < t \}  \cap \{x_3 + z_3 > 0\}}  \int_{\R^3} \nabla_v  \left( \frac{ (|\hat{v}|^2-1  )( \o \times \hat v )_i }{ (1+  \hat v \cdot \o )^2} \right)  \cdot (E + \hat v \times B - g \mathbf{e}_3 ) \frac{ f (t-|z|,x+z ,v)}{|z|^2 } \dd v \dd z
\\ & - \int_{ \{ |z| = t \}  \cap \{x_3 + z_3 > 0\}}  \int_{\R^3}\frac{ (|\hat{v}|^2-1  ) ( \o \times \hat v )_i }{t^2 (1+  \hat v \cdot \o )^2} f(0,x+z ,v)\dd v \dd S_z.
\end{split}
\Ee
And by direct calculation,
\Be
\begin{split}
& \nabla_v \left( \frac{ (\o \times \hat v)_i ( 1- |\hat v |^2 ) }{(1 + \hat v \cdot \o )^2 } \right) 
\\ & = \frac{ \nabla_v [ (\o \times \hat v )_i ] }{ (1 +|v|^2 )(1+ \hat v \cdot \o )^2 } +  \frac{ - 2 v (\o \times \hat v )_i  }{ ( 1 +|v|^2 )^2  ( 1 +\hat v \cdot \o )^2 }   +  \frac{2}{ (1 + \hat v \cdot \o )^3 } \left(  \frac{ (\o - (\o \cdot \hat v ) \hat v ) ( \o \times \hat v  )_i }{ \langle v \rangle^3 } \right).
\end{split}
\Ee
%From  \ref{vdecaybasic}, \ref{ominusdotexp}, and \ref{pvhatvbravest}, we have
%\Be \label{pvBestbulkkernel}
%\begin{split}
%|  \nabla_v \left( \frac{ (\o \times \hat v)_i ( 1- |\hat v |^2 ) }{(1 + \hat v \cdot \o )^2 } \right)   | \lesssim \frac{ | \o \times v | \langle v \rangle }{ (1 + | \o \times v | )^4 }  + \langle v \rangle +  \frac{ | \o \times v | \langle v \rangle }{ (1 + | \o \times v | )^{5 } }  \lesssim  \sqrt{ 1 +|v|^2 }.
%\end{split}
%\Ee
And
\Be \label{ptBestbulkneg11}
\begin{split}
\frac{\p}{\p t }  \ref{Bestbulkneg}  = & \iota_i \int_{ \{ |z| < t \}  \cap \{x_3 + z_3 < 0\}}   \int_{\R^3}  \frac{ (|\hat{v}|^2-1  )( \bar \o \times \hat v )_i }{|z|^2 (1+  \hat v \cdot \bar \o )^2}    \hat v \cdot \nabla_x f(t-|z|,\bar x+ \bar z ,v)\dd v \dd z
 \\ & - \iota_i \int_{ \{ |z| < t \}  \cap \{x_3 + z_3 < 0\}}  \int_{\R^3} \nabla_v  \left( \frac{ (|\hat{v}|^2-1  )( \bar \o \times \hat v )_i }{ (1+  \hat v \cdot \bar \o )^2} \right)  \cdot (E + \hat v \times B - g \mathbf{e}_3 ) \frac{ f (t-|z|,\bar x+ \bar z ,v)}{|z|^2 } \dd v \dd z
\\ & - \iota_i \int_{ \{ |z| = t \}  \cap \{x_3 + z_3 < 0\}}  \int_{\R^3}\frac{ (|\hat{v}|^2-1  )( \bar \o \times \hat v )_i }{t^2 (1+  \hat v \cdot \bar \o )^2} f(0,\bar x+ \bar z ,v)\dd v \dd S_z.
\end{split}
\Ee

Next, similar to \ref{ptEestSpos11} and \ref{ptEestSneg11},
\Be \label{ptBestSpos11}
\begin{split}
& \frac{ \p}{\p t } \ref{BestSpos}
\\  = &  \int_{ \{ |z| < t \} \cap \{ z_3+x_3 > 0 \} } \int_{\mathbb R^3 }   a^B_i (v ,\o ) \cdot (E + \hat v \times B - g\mathbf e_3 ) \hat v \cdot \nabla_x f  (t-|z|,x+z ,v) \dd v \frac{ \dd z}{|z|}
\\ & - \int_{ \{ |z| < t \} \cap \{ z_3+x_3 > 0 \} } \int_{\mathbb R^3 } \nabla_v \left(  a^B_i (v ,\o ) \cdot (E + \hat v \times B - g\mathbf e_3 ) \right) \cdot (E + \hat v \times B - g \mathbf{e}_3 )  f  (t-|z|,x+z ,v) \dd v \frac{ \dd z}{|z|}  
\\ & -  \int_{ \{ |z| < t \} \cap \{ z_3+x_3 > 0 \} } \int_{\mathbb R^3 } a^B_i (v ,\o ) \cdot (\p_t E + \hat v \times \p_t B - g\mathbf e_3 )  f (t-|z|,x+z ,v) \dd v \frac{ \dd z}{|z|} 
\\ & - \int_{ \{ |z| = t \} \cap \{ z_3+x_3 > 0 \} } \int_{\mathbb R^3 } a^B_i (v ,\o ) \cdot (E + \hat v \times B - g\mathbf e_3 )   f (0,x+z ,v) \dd v \frac{ \dd S_z}{t}.
\end{split}
\Ee
\Be \label{ptBestSneg11}
\begin{split}
& \frac{ \p}{\p t } \ref{BestSneg}
\\   = & \iota_i \int_{ \{ |z| < t \} \cap \{ z_3+x_3 < 0 \} } \int_{\mathbb R^3 }   a^B_i (v ,\bar \o ) \cdot (E + \hat v \times B - g\mathbf e_3 ) \hat v \cdot \nabla_x f  (t-|z|,\bar x + \bar z ,v) \dd v \frac{ \dd z}{|z|}
\\ & - \iota_i \int_{ \{ |z| < t \} \cap \{ z_3+x_3 < 0 \} } \int_{\mathbb R^3 } \nabla_v \left(  a^B_i (v ,\bar \o ) \cdot (E + \hat v \times B - g\mathbf e_3 ) \right) \cdot (E + \hat v \times B - g \mathbf{e}_3 )  f  (t-|z|,\bar x + \bar z ,v) \dd v \frac{ \dd z}{|z|}  
\\ & - \iota_i  \int_{ \{ |z| < t \} \cap \{ z_3+x_3 < 0 \} } \int_{\mathbb R^3 } a^B_i (v ,\bar \o ) \cdot (\p_t E + \hat v \times \p_t B - g\mathbf e_3 )  f (t-|z|,\bar x + \bar z ,v) \dd v \frac{ \dd z}{|z|} 
\\ & - \iota_i \int_{ \{ |z| = t \} \cap \{ z_3+x_3 < 0 \} } \int_{\mathbb R^3 } a^B_i (v ,\bar \o ) \cdot (E + \hat v \times B - g\mathbf e_3 )   f (0,\bar x + \bar z ,v) \dd v \frac{ \dd S_z}{t}.
\end{split}
\Ee

Next, similar to \ref{ptEestbdrypos11} and \ref{ptEestbdryneg11}, we have
\Be \label{ptBestbdrypos11}
\begin{split}
& \frac{ \p }{\p t } \ref{Bestbdrypos}
\\ = &  \int_{ |z_\parallel | < \sqrt{ t^2 - x_3^2 } }  \int_{\R^3} \left(  -(e_3 \times \hat v)_i   + \frac{ ( \o \times \hat v)_i   \hat v_3 }{1+ \hat{v} \cdot  \o} \right) \frac{\hat v \cdot \nabla_x f (t- ( |z_\parallel |^2 + x_3^2 )^{1/2} ,z_\parallel + x_\parallel, 0 ,v) }{ (|z_\parallel |^2 + x_3^2)^{1/2}} \dd v \dd z_\parallel
 \\ & + \int_{ |z_\parallel | < \sqrt{ t^2 - x_3^2 } }  \int_{\R^3} \nabla_v \left(  -(e_3 \times \hat v)_i   + \frac{ ( \o \times \hat v)_i   \hat v_3 }{1+ \hat{v} \cdot  \o} \right) \cdot  ( E + \hat v \times B -  g \mathbf e_3 )
 \\ & \quad \quad \quad \quad \quad \quad \quad \quad \quad \times \frac{ f (t- ( |z_\parallel |^2 + x_3^2 )^{1/2} ,z_\parallel + x_\parallel, 0 ,v) }{ (|z_\parallel |^2 + x_3^2)^{1/2}} \dd v \dd z_\parallel
\\ & + \int_0^{2\pi }   \int_{\R^3} \left(  -(e_3 \times \hat v)_i   + \frac{ ( \o \times \hat v)_i   \hat v_3 }{1+ \hat{v} \cdot  \o} \right) f ( 0  ,z_\parallel + x_\parallel, 0 ,v)  \dd v \dd \theta.
\end{split}
\Ee
\Be \label{ptBestbdryneg11}
\begin{split}
& \frac{ \p }{\p t } \ref{Bestbdryneg}
\\ = &  \iota_i \int_{ |z_\parallel | < \sqrt{ t^2 - x_3^2 } }  \int_{\R^3} \left(  -(e_3 \times \hat v)_i   + \frac{ ( \bar \o \times \hat v)_i   \hat v_3 }{1+ \hat{v} \cdot  \bar \o} \right) \frac{\hat v \cdot \nabla_x f (t- ( |z_\parallel |^2 + x_3^2 )^{1/2} ,z_\parallel + x_\parallel, 0 ,v) }{ (|z_\parallel |^2 + x_3^2)^{1/2}} \dd v \dd z_\parallel
 \\ &  + \iota_i  \int_{ |z_\parallel | < \sqrt{ t^2 - x_3^2 } }  \int_{\R^3} \nabla_v \left(   -(e_3 \times \hat v)_i   + \frac{ ( \bar \o \times \hat v)_i   \hat v_3 }{1+ \hat{v} \cdot  \bar \o}   \right) \cdot  ( E + \hat v \times B -  g \mathbf e_3 )
  \\ & \quad \quad \quad \quad \quad \quad \quad \quad \quad \times  \frac{ f (t- ( |z_\parallel |^2 + x_3^2 )^{1/2} ,z_\parallel + x_\parallel, 0 ,v) }{ (|z_\parallel |^2 + x_3^2)^{1/2}} \dd v \dd z_\parallel
\\ &  + \iota_i  \int_0^{2\pi }   \int_{\R^3} \left(  -(e_3 \times \hat v)_i   + \frac{ ( \bar \o \times \hat v)_i   \hat v_3 }{1+ \hat{v} \cdot  \bar \o}   \right) f ( 0  ,z_\parallel + x_\parallel, 0 ,v)  \dd v \dd \theta.
\end{split}
\Ee

Next, similar to \ref{ptEestinitialpos11} and \ref{ptEestinitialneg11}, we have
Thus
\Be \label{ptBestinitialpos11}
\begin{split}
\frac{\p}{\p t }  \ref{Bestinitialpos} = &   \int_0^{\arccos(-x_3/t )} \int_0^{2\pi}  \int_{\R^3}  \left(  \frac{  (  \o \times \hat v)_i   }{1+ \hat{v} \cdot  \o} \right)  f(0,x+z ,v) \sin \phi  \dd v  d \theta d \phi
\\ & +   \int_0^{\arccos(-x_3/t )} \int_0^{2\pi}  \int_{\R^3}   \left(  \frac{  (  \o \times \hat v)_i   }{1+ \hat{v} \cdot  \o} \right)  ( \nabla_x f(0,x+z ,v) \cdot z ) \sin \phi  \dd v  d \theta d \phi
\\ & +  \int_0^{2\pi}  \int_{\R^3}   \sum_j \o_j  \left(  \frac{  (  \o \times \hat v)_i   }{1+ \hat{v} \cdot  \o} \right)    f(0,x_\parallel +z_\parallel, 0 ,v) \frac{x_3}{ t}  \  \dd v  d \theta.
\end{split}
\Ee
\Be \label{ptBestinitialneg11}
\begin{split}
\frac{\p}{\p t }  \ref{Bestinitialneg} = & \iota_i  \int_{\arccos(-x_3/t )}^0 \int_0^{2\pi}  \int_{\R^3}   \left(  \frac{  (  \bar \o \times \hat v)_i   }{1+ \hat{v} \cdot  \bar \o} \right)   f(0,\bar x+ \bar z ,v) \sin \phi  \dd v  d \theta d \phi
\\ & +  \iota_i  \int_{\arccos(-x_3/t )}^0 \int_0^{2\pi}  \int_{\R^3}    \left(  \frac{  (  \bar \o \times \hat v)_i   }{1+ \hat{v} \cdot  \bar \o} \right) ( \nabla_x f(0,\bar x+ \bar z ,v) \cdot \bar z ) \sin \phi  \dd v  d \theta d \phi
\\ & -  \iota_i  \int_0^{2\pi}  \int_{\R^3}    \left(  \frac{  (  \bar \o \times \hat v)_i   }{1+ \hat{v} \cdot  \bar \o} \right)   f(0,x_\parallel +z_\parallel, 0 ,v)  \frac{x_3}{t }  \  \dd v  d \theta.
\end{split}
\Ee

Finally, similar to \ref{ptEest6}, we have
\Be \label{ptBest6}
\begin{split}
& \frac{ \p}{\p t } \ref{Bestbdrycontri}  
 \\  = & - 2 (1- \delta_{i3}  )  \int_{ |z_\parallel | < \sqrt{ t^2 - x_3^2 } }  \int_{\R^3}  \frac{ \hat v_{\overline{i +1}} \hat v \cdot \nabla_x f (t- ( |z_\parallel |^2 + x_3^2 )^{1/2} ,z_\parallel + x_\parallel, 0 ,v) }{ (|z_\parallel |^2 + x_3^2)^{1/2}} \dd v \dd z_\parallel
 \\ & - 2 (1- \delta_{i3}  )  \int_{ |z_\parallel | < \sqrt{ t^2 - x_3^2 } }  \int_{\R^3}  \frac{  \nabla_v(\hat v_{\overline{i +1}} ) \cdot (E + \hat v \times B - g \mathbf e_3)  f (t- ( |z_\parallel |^2 + x_3^2 )^{1/2} ,z_\parallel + x_\parallel, 0 ,v) }{ (|z_\parallel |^2 + x_3^2)^{1/2}} \dd v \dd z_\parallel
\\ & - 2(1- \delta_{i3} ) \int_0^{2\pi }   \int_{\R^3}   \hat v_{\overline{i +1}} f ( 0  ,z_\parallel + x_\parallel, 0 ,v)  \dd v \dd \theta.
\end{split}
\Ee

Next, we calculate the spatial derivatives to $B(t,x)$ by taking $\frac{\p}{\p_{x_k}} $ to \ref{Besttat0pos}-\ref{Bestbdrycontri}. Similar to \ref{pxEesttat0pos}, we have
\Be \label{pxBesttat0pos}
\begin{split}
& \frac{ \p}{\p_{x_k } }  \ref{Besttat0pos} + \frac{ \p}{\p_{x_k } }  \ref{Besttat0neg}  
\\ = &   \frac{1}{4\pi } \int_0^{\arccos(-x_3/t ) }  \int_{0 }^{2\pi}    \left( t \p_{x_k } \p_t  B_{0,i}( x + z  ) +  \p_{x_k }  B_{0,i} (x+z) +  \nabla  \p_{x_k }  B_{0,i} ( x+ z) \cdot z \right)  \sin \phi d\theta d \phi
\\ & + \frac{1}{4 \pi t}  \delta_{k3 } \int_0^{2\pi } (  t \p_t  B_{0,i}( x_\parallel + z_\parallel , 0  ) +  B_{0,i} (x_\parallel + z_\parallel , 0) + \nabla  B_{0,i} ( x_\parallel + z_\parallel , 0) \cdot z )  d\theta
\\ & -  \frac{\iota_i \iota_k}{4\pi}  \int_{\arccos( -x_3 /t ) }^{\pi } \int_0^{2 \pi } \big(   t \p_{x_k } \p_t  B_{0,i}(\bar x + \bar z  ) +  \p_{x_k }  B_{0,i} (\bar x + \bar z) +  \nabla  \p_{x_k }  B_{0,i} ( \bar x + \bar z) \cdot \bar z  \big) \sin \phi d\theta d\phi
\\ & + \frac{\iota_i }{4\pi t  } \delta_{k3 }  \int_0^{2 \pi }   \left( t \p_t  B_{0,i}( x_\parallel+  z_\parallel,0 ) +  B_{0,i} (x_\parallel+  z_\parallel,0) + \nabla  B_{0,i} (x_\parallel+  z_\parallel,0) \cdot \bar z \right)   d\theta.
\end{split}
\Ee
Then, similar to \ref{Eestbulkpos12} and \ref{Eestbulkneg12}, we have
\Be \label{Bestbulkpos12}
\begin{split}
\frac{ \p}{\p_{x_k } }  \ref{Bestbulkpos} = & \int_{ \{ |z| < t  \} \cap \{z_3 +x_3 > 0 \} } \int_{\R^3}  \frac{   ( \o \times \hat v  )_i \left( 1   -  |\hat v |^2  \right) }{( 1 + \hat v \cdot \o )^2|z |^2}  \p_{x_k}f (t-|z| ,x+z ,v) \dd v \dd z  
 \\& + \delta_{k3} \int_{ \{  (| z_\parallel |^2 + |x_3|^2 )^{1/2} < t  \}  }   \int_{\R^3}  \frac{   ( \o \times \hat v  )_i \left( 1   -  |\hat v |^2  \right) }{( 1 + \hat v \cdot \o )^2 (| z_\parallel |^2 + |x_3|^2 )}  f(t-( |z_\parallel  |^2 + x_3^2 )^{1/2}, x_\parallel + z_\parallel, 0  ,v) \dd v \dd z_\parallel,
\end{split}
\Ee
\Be \label{Bestbulkneg12}
\begin{split}
\frac{ \p}{\p_{x_k } }  \ref{Bestbulkneg} = & +  \iota_i \iota_k \int_{ \{ |z| < t  \} \cap \{x_3+z_3 < 0 \} } \int_{\R^3}   \frac{   ( \bar \o \times \hat v  )_i \left( 1   -  |\hat v |^2  \right) }{( 1 + \hat v \cdot \bar \o )^2|z |^2} \p_{x_k}f (t-|z| , \bar x+\bar z ,v) \dd v \dd z  
 \\& - \iota_i \delta_{k3} \int_{ \{  (| z_\parallel |^2 + |x_3|^2 )^{1/2} < t  \}  }   \int_{\R^3}    \frac{   ( \bar \o \times \hat v  )_i \left( 1   -  |\hat v |^2  \right) }{( 1 + \hat v \cdot \bar \o )^2(|z_\parallel |^2 + x_3^2 )}  f(t-( |z_\parallel  |^2 + x_3^2 )^{1/2}, x_\parallel + z_\parallel , 0  ,v) \dd v \dd z_\parallel.
\end{split}
\Ee

Next, similar to \ref{EestSpos12} and \ref{EestSneg2}, we have
\Be \label{BestSpos12}
\begin{split}
\frac{ \p }{\p_{x_k } } \ref{BestSpos} = &  \int_{ \{ |z| < t  \} \cap \{z_3 +x_3 > 0 \} }  \int_{\R^3}   a^B_i (v ,\o ) \cdot ( \p_{x_k } E +   \hat v  \times  \p_{x_k } B)   f (t-|z|,x+z ,v) \dd v \frac{ \dd z}{|z|}
\\  & +  \int_{ \{ |z| < t  \} \cap \{z_3 +x_3 > 0 \} } \int_{\R^3}   a^B_i (v ,\o ) \cdot (  E +   \hat v  \times B) \p_{x_k}  f (t-|z|,x+z ,v) \dd v \frac{ \dd z}{|z|}
\\ & + \delta_{k3} \int_{ \{  (| z_\parallel |^2 + |x_3|^2 )^{1/2} < t  \}  } \int_{\R^3}   a^B_i (v ,\o ) \cdot (  E +   \hat v  \times B) \frac{f (t-(| z_\parallel |^2 + |x_3|^2 )^{1/2},x_\parallel + z_\parallel, 0 ,v)  }{(| z_\parallel |^2 + |x_3|^2 )^{1/2}}    \dd v \dd z_\parallel,
\end{split}
\Ee
and
\Be \label{BestSneg2}
\begin{split}
\frac{ \p }{\p_{x_k } } \ref{BestSneg} = &  \iota_i \int_{ \{ |z| < t  \} \cap \{z_3 +x_3 < 0 \} }  \int_{\R^3}   a^B_i (v ,\bar \o ) \cdot ( \p_{x_k } E +   \hat v  \times  \p_{x_k } B)   f (t-|z|,\bar x + \bar z ,v) \dd v \frac{ \dd z}{|z|}
\\  & + \iota_i \iota_k  \int_{ \{ |z| < t  \} \cap \{z_3 +x_3 < 0 \} }  \int_{\R^3}   a^B_i (v ,\bar \o ) \cdot (  E +   \hat v  \times B) \p_{x_k}  f (t-|z|,\bar x + \bar z ,v) \dd v \frac{ \dd z}{|z|}
\\ & - \iota_i \delta_{k3} \int_{ \{  (| z_\parallel |^2 + |x_3|^2 )^{1/2} < t  \}  } \int_{\R^3}   a^B_i (v ,\bar \o ) \cdot (  E +   \hat v  \times B) \frac{  f (t-(| z_\parallel |^2 + |x_3|^2 )^{1/2},x_\parallel+z_\parallel , 0 ,v)   }{(| z_\parallel |^2 + |x_3|^2 )^{1/2}}  \dd v  \dd z_\parallel.
\end{split}
\Ee

Next, similar to \ref{Eestbdrypos21} and \ref{Eestbdryneg21}, we have
\Be \label{Bestbdrypos21}
\begin{split}
& \frac{ \p }{\p_{x_k } } \ref{Bestbdrypos}
\\ = &  ( 1 - \delta_{k3} )   \int_{ \sqrt{t^2 - |z_\parallel |^2 } > x_3}  \int_{\R^3}   \left( -(e_3 \times \hat v)_i   + \frac{ ( \o \times \hat v)_i   \hat v_3 }{1+ \hat{v} \cdot  \o}   \right)  \p_{x_k } \frac{ f (t- ( |z_\parallel |^2 + x_3^2 )^{1/2} ,z_\parallel + x_\parallel, 0 ,v)}{(|z_\parallel |^2 + x_3^2)^{1/2}} \dd v \dd z
\\ & + \delta_{k3 }  \int_{ \sqrt{t^2 - |z_\parallel |^2 } > x_3}  \int_{\R^3}    \left( -(e_3 \times \hat v)_i   + \frac{ ( \o \times \hat v)_i   \hat v_3 }{1+ \hat{v} \cdot  \o}   \right) \frac{-x_3 f (t- ( |z_\parallel |^2 + x_3^2 )^{1/2} ,z_\parallel + x_\parallel, 0 ,v) }{(|z_\parallel |^2 + x_3^2)^{3/2}} \dd v   \dd z_\parallel
\\ &      - \delta_{k3}  \int_{ \sqrt{t^2 - |z_\parallel |^2 } > x_3}  \int_{\R^3}   \left( -(e_3 \times \hat v)_i   + \frac{ ( \o \times \hat v)_i   \hat v_3 }{1+ \hat{v} \cdot  \o}   \right) \left(  \frac{x_3 \hat v \cdot \nabla_x f (t- ( |z_\parallel | + x_3^2 )^{1/2} ,z_\parallel + x_\parallel, 0 ,v)}{(|z_\parallel |^2 + x_3^2)} \right) \dd v \dd z_\parallel 
\\ & +  \delta_{k3}  \int_{ \sqrt{t^2 - |z_\parallel |^2 } > x_3}  \int_{\R^3}  \nabla_v   \left( -(e_3 \times \hat v)_i   + \frac{ ( \o \times \hat v)_i   \hat v_3 }{1+ \hat{v} \cdot  \o}   \right) (E + \hat v \times B - g \mathbf{e}_3 )  
\\ & \quad \quad \quad \quad \quad \quad \quad \quad \quad \quad \quad \times  f (t- ( |z_\parallel | + x_3^2 )^{1/2} ,z_\parallel + x_\parallel, 0 ,v)  \dd v \frac{ x_3 }{(|z_\parallel |^2 + x_3^2)} \dd z_\parallel
\\ & -   \delta_{k3}   \int_{ \sqrt{t^2 - |z_\parallel |^2 } = x_3}   \int_{\R^3}   \left( -(e_3 \times \hat v)_i   + \frac{ ( \o \times \hat v)_i   \hat v_3 }{1+ \hat{v} \cdot  \o}   \right)  f (t- ( |z_\parallel |^2 + x_3^2 )^{1/2} ,z_\parallel + x_\parallel, 0 ,v) \dd v  \frac{x_3 }{\sqrt{t^2 - x_3^2} }   \frac{ \dd S_{ z_\parallel } }{t},
\end{split} 
\Ee 
and
\Be \label{Bestbdryneg21}
\begin{split}
& \frac{ \p }{\p_{x_k } } \ref{Bestbdryneg}
\\ = & \iota_i ( 1 - \delta_{k3} )   \int_{ \sqrt{t^2 - |z_\parallel |^2 } < x_3}  \int_{\R^3}   \left( -(e_3 \times \hat v)_i   + \frac{ ( \bar \o \times \hat v)_i   \hat v_3 }{1+ \hat{v} \cdot  \bar \o}   \right)   \p_{x_k } \frac{ f (t- ( |z_\parallel |^2 + x_3^2 )^{1/2} ,z_\parallel + x_\parallel, 0 ,v)}{(|z_\parallel |^2 + x_3^2)^{1/2}} \dd v \dd z
\\ &  + \iota_i  \delta_{k3 }  \int_{ \sqrt{t^2 - |z_\parallel |^2 } < x_3}  \int_{\R^3}   \left( -(e_3 \times \hat v)_i   + \frac{ ( \bar \o \times \hat v)_i   \hat v_3 }{1+ \hat{v} \cdot  \bar \o}   \right) \frac{-x_3  f (t- ( |z_\parallel |^2 + x_3^2 )^{1/2} ,z_\parallel + x_\parallel, 0 ,v)}{(|z_\parallel |^2 + x_3^2)^{3/2}} \dd v   \dd z_\parallel
\\ &   - \iota_i  \delta_{k3}  \int_{ \sqrt{t^2 - |z_\parallel |^2 } < x_3}  \int_{\R^3}  \left( -(e_3 \times \hat v)_i   + \frac{ ( \bar \o \times \hat v)_i   \hat v_3 }{1+ \hat{v} \cdot  \bar \o}   \right)   \left( \frac{ x_3 \hat v \cdot \nabla_x f (t- ( |z_\parallel | + x_3^2 )^{1/2} ,z_\parallel + x_\parallel, 0 ,v) }{(|z_\parallel |^2 + x_3^2)} \right) \dd v \dd z_\parallel 
\\ & + \iota_i  \delta_{k3}  \int_{ \sqrt{t^2 - |z_\parallel |^2 } > x_3}  \int_{\R^3}  \nabla_v    \left( -(e_3 \times \hat v)_i   + \frac{ ( \bar \o \times \hat v)_i   \hat v_3 }{1+ \hat{v} \cdot  \bar \o}   \right)  (E + \hat v \times B - g \mathbf{e}_3 )   \\ & \quad \quad \quad \quad \quad \quad \quad \quad \quad \quad \quad \times f (t- ( |z_\parallel | + x_3^2 )^{1/2} ,z_\parallel + x_\parallel, 0 ,v)  \dd v \frac{ x_3 }{(|z_\parallel |^2 + x_3^2)} \dd z_\parallel
\\ & +  \iota_i   \delta_{k3}   \int_{ \sqrt{t^2 - |z_\parallel |^2 } = x_3}   \int_{\R^3}  \left( -(e_3 \times \hat v)_i   + \frac{ ( \bar \o \times \hat v)_i   \hat v_3 }{1+ \hat{v} \cdot  \bar \o}   \right)  f (t- ( |z_\parallel |^2 + x_3^2 )^{1/2} ,z_\parallel + x_\parallel, 0 ,v) \dd v  \frac{x_3 }{\sqrt{t^2 - x_3^2} }   \frac{ \dd S_{ z_\parallel } }{t}.
\end{split} 
\Ee

Next, similar to \ref{pxEestinitialpos} and \ref{Eestinitialneg}, we have
\Be \label{pxBestinitialpos}
\begin{split}
 \frac{ \p}{\p_{x_k } }   \ref{Bestinitialpos}   =  &    \int_{ 0}^{\arccos(-x_3/t ) }   \int_0^{2\pi}  \left(  \frac{  (  \o \times \hat v)_i   }{1+ \hat{v} \cdot  \o}\right)   \p_{x_k}  f(0, x+z ,v)   ( t  \sin \phi )  \,  \dd v \dd \theta \dd \phi 
\\ &  + \delta_{k3}   \int_0^{2\pi}  \left(  \frac{  (  \o \times \hat v)_i   }{1+ \hat{v} \cdot  \o}\right)     f(0, z_\parallel +x_\parallel, 0 ,v)  \,  \dd v \dd \theta,
\end{split}
\Ee
\Be \label{pxBestinitialneg}
\begin{split}
 \frac{ \p}{\p_{x_k } }   \ref{Bestinitialneg}   =  &  \iota_i \iota_k \int_{\arccos(-x_3/t ) }^0   \int_0^{2\pi}  \left(  \frac{  (  \bar \o \times \hat v)_i   }{1+ \hat{v} \cdot  \bar \o}\right)   \p_{x_k}  f(0, \bar x + \bar z ,v)   ( t  \sin \phi )  \,  \dd v \dd \theta \dd \phi 
\\ &  - \iota_i \delta_{k3}   \int_0^{2\pi}  \left(  \frac{  (  \bar \o \times \hat v)_i   }{1+ \hat{v} \cdot  \bar \o}\right)    f(0, z_\parallel +x_\parallel, 0 ,v)  \,  \dd v \dd \theta.
\end{split}
\Ee

Finally, we have
\Be
\begin{split}
&  \frac{\p}{\p x_k }  \ref{Bestbdrycontri} 
\\ & =  - 2( 1 - \delta_{k3} )(1-\delta_{i3} )   \int_{ \sqrt{t^2 - |z_\parallel |^2 } > x_3}  \int_{\R^3}   \hat v_{\overline{i+1} }    \p_{x_k } f (t- ( |z_\parallel |^2 + x_3^2 )^{1/2} ,z_\parallel + x_\parallel, 0 ,v)   \dd v\frac{ \dd z_\parallel}{(|z_\parallel |^2 + x_3^2)^{1/2}}
\\ & - 2\delta_{k3 } (1-\delta_{i3} )  \int_{ \sqrt{t^2 - |z_\parallel |^2 } > x_3}  \int_{\R^3}   \hat v_{\overline{i+1} }  f (t- ( |z_\parallel |^2 + x_3^2 )^{1/2} ,z_\parallel + x_\parallel, 0 ,v) \dd v  \left( \frac{ -x_3}{(|z_\parallel |^2 + x_3^2)^{3/2}} \right)    \dd z_\parallel
\\ & - 2 \delta_{k3}  (1-\delta_{i3} )  \int_{ \sqrt{t^2 - |z_\parallel |^2 } > x_3}  \int_{\R^3}    \hat v_{\overline{i+1} }  \hat v \cdot \nabla_x f (t- ( |z_\parallel |^2 + x_3^2 )^{1/2} ,z_\parallel + x_\parallel, 0 ,v) \dd v\frac{  x_3 }{(|z_\parallel |^2 + x_3^2)}  \dd z_\parallel
\\ & + 2 \delta_{k3}  (1-\delta_{i3} )  \int_{ \sqrt{t^2 - |z_\parallel |^2 } > x_3}  \int_{\R^3}  \nabla_v(   \hat v_{\overline{i+1} }  ) \cdot (E + \hat v \times B  - g \mathbf e_3 ) \frac{x_3 f (t- ( |z_\parallel |^2 + x_3^2 )^{1/2} ,z_\parallel + x_\parallel, 0 ,v) }{(|z_\parallel |^2 + x_3^2)} \dd v \dd z_\parallel
\\ & +  2  \delta_{k3} (1-\delta_{i3} )  \int_{ 0}^{2 \pi }    \int_{\R^3}   \hat v_{\overline{i+1} }  f (0 ,z_\parallel + x_\parallel, 0 ,v)  \frac{  x_3 }{t} \dd v    d\theta.
\end{split}
\Ee

   \unhide


\begin{thebibliography}{99}

%\bibitem{Seeley} Extension of $C^\infty$ functions defined in a half space, R. T. Seeley

\bibitem{CKL} \newblock Y. Cao, C. Kim and D. Lee,
\newblock Global strong solutions of the Vlasov-Poisson-Boltzmann system in bounded domains, 
\newblock \emph{Arch. Rational Mech. Anal.}, \textbf{233} (2019), 1027--1130.

\bibitem{CK} \newblock Y. Cao and C.Kim,
\newblock On some recent progress in the Vlasov-Poisson-Boltzmann system with diffuse reflection boundary,
\newblock \emph{Recent Advances in Kinetic Equations and Applications}, Springer INdAM Series, \textbf{48} (2021). 

\bibitem{CS} \newblock J. Cooper and W. Strauss,
\newblock  The initial boundary problem for the Maxwell equations in the presence of a moving body, 
\newblock \emph{SIAM J. Math. Anal.}, \textbf{16} (1985), 1165--1179.

\bibitem{Federici} \newblock G. Federici et al, 
\newblock Plasma-material interactions in current tokamaks and their implications for next step fusion reactors, \newblock \emph{Nucl. Fusion}, \textbf{41} (2001).

\bibitem{Griffiths} \newblock D. Griffith,
\newblock \emph{Introduction to Electrodynamics}, 
\newblock Cambridge University Press, 4th edition, 2017.

\bibitem{GuoVM} \newblock Y. Guo,
\newblock Global weak solutions of the Vlasov-Maxwell system with boundary conditions,
\newblock \emph{Comm. Math. Phys.}, \textbf{154} (1993), 245--263.

\bibitem{Guo_RV} \newblock Y. Guo,
\newblock Regularity for the Vlasov equations in a half space,
\newblock \emph{Indiana Univ. Math. J.}, \textbf{43} (1994), 255--320.

\bibitem{Glassey} \newblock R. Glassey,
\newblock \emph{The Cauchy Problem in Kinetic Theory}, 
\newblock Society for Industrial and Applied Mathematics, 1996.

\bibitem{GS} \newblock R. Glassey and W. Strauss,
\newblock Singularity formation in a collisionless plasma could occur only at high velocities,
\newblock \emph{Arch. Rational Mech. Anal.}, \textbf{92} (1986), 59--90.

\bibitem{GS2} \newblock R. Glassey and W. Strauss,
\newblock High velocity particles in a collisionless plasma,
\newblock \emph{Math. Methods Appl. Sci.}, \textbf{9} (1987), 46--52.

\bibitem{GSc1} \newblock R. Glassey and J. Schaeffer,
\newblock The relativistic Vlasov-Maxwell system in two space dimensions. I, 
\newblock \emph{Arch. Rational Mech. Anal.}, \textbf{141} (1998), 331--354.

\bibitem{GSc2} \newblock R. Glassey and J. Schaeffer,
\newblock The relativistic Vlasov-Maxwell system in two space dimensions. II,
\newblock \emph{Arch. Rational Mech. Anal.}, \textbf{141} (1998), 355--374.


\bibitem{Jackson} \newblock D. Jackson,
\newblock \emph{Classical Electrodynamics}, 
\newblock Wiley, 3rd edition, 1998.

\bibitem{Surface_EM} \newblock X. Liu, F. Yang, M. Li and S. Xu,
\newblock Generalized boundary conditions in surface electromagnetics: fundamental theorems and surface characterizations,
\newblock \emph{Appl. Sci.}, \textbf{9} (2019).  


\bibitem{LS} \newblock J. Luk and R. Strain,
\newblock A new continuation criterion for the relativistic Vlasov-Maxwell system, 
\newblock \emph{Comm. Math. Phys.}, \textbf{331} (2014), 1005--1027.


\bibitem{McDonald} \newblock K. McDonald,
\newblock Electromagnetic Fields inside a Perfect Conductor, 
\newblock \emph{unpublished note}.%, \\
%https://www.physics.princeton.edu/$\sim$mcdonald/examples/perfect.pdf


 \bibitem{TTS} \newblock T. Nguyen, T. Nguyen and W. Strauss,
 \newblock Global magnetic confinement for the 1.5D Vlasov-Maxwell system, 
 \newblock \emph{Kinetic and Related Models}, \textbf{8} (2015), 153--168.


%\bibitem{SR} Spitz, M: Regularity theory for nonautonomous Maxwell equations with perfectly conducting boundary conditions

%\bibitem{IM} Ikawa, M.: Decay of solutions of the wave equation in the exterior of two convex obstacles.



 
%\bibitem{GP}  Gérard-Varet, D.; Prestipino, M.: Formal derivation and stability analysis of boundary layer models in MHD. Z. \textit{Angew. Math. Phys.} 68 (2017), no. 3, Paper No. 76, 16 pp.

%\bibitem{LXY} Liu, C.; Xie, F.; Yang, T.: MHD boundary layers theory in Sobolev spaces without monotonicity I: Well-posedness theory. \textit{Comm. Pure Appl. Math.} 72 (2019), no. 1, 63-121.

%\bibitem{Sun} Diffraction of electromagnetic waves in the gravitational field of the Sun.

%\bibitem{LargeStar} Wave Optics of the Spherical Gravitational Lens Part I: Diffraction of ta Plane Electromagnetic Wave by a Large Star.











%Read More: https://epubs.siam.org/doi/abs/10.1137/0516086





\end{thebibliography}
\end{document}